

\documentclass[preprint,10pt]{elsarticle}




\usepackage{amssymb}
\usepackage{amsmath}
\usepackage[all,cmtip]{xy}
\usepackage{latexsym}
\usepackage{mathtools}
\usepackage{amsthm}
\usepackage{a4wide}
\usepackage{color}
\usepackage{hyperref}
\usepackage{tikz}
\usetikzlibrary{arrows}
\input diagxy


\journal{}

\theoremstyle{plain}
  \newtheorem{thm}{Theorem}[section]
  
  \newtheorem{prop}[thm]{Proposition}
  \newtheorem{cor}[thm]{Corollary}
\theoremstyle{definition}
  \newtheorem{defn}[thm]{Definition}
  
  \newtheorem{exmp}[thm]{Example}
  \newtheorem{rem}[thm]{Remark}

\def\rto{{\bfig\morphism<180,0>[\mkern-4mu`\mkern-4mu;]\place(82,0)[\mapstochar]\efig}}

\makeatletter
\def\ps@pprintTitle{%
 \let\@oddhead\@empty
 \let\@evenhead\@empty
 \def\@oddfoot{\centerline{\thepage}}%
 \let\@evenfoot\@oddfoot}
\makeatother

\newcommand{\Lra}{\Longrightarrow}

\newcommand{\bv}{\bigvee}
\newcommand{\bw}{\bigwedge}

\renewcommand{\phi}{\varphi}

\newcommand{\sk}{{\sf k}}

\newcommand{\sV}{{\mathcal V}}

\newcommand{\sy}{{\sf y}}

\newcommand{\Cat}{{\bf Cat}}

\newcommand{\Set}{{\bf Set}}

\newcommand{\VCat}{{\mathcal V}\text{-}\Cat}

\newcommand{\op}{{\rm op}}

\begin{document}

\begin{frontmatter}



\title{Metagories}


\author{Walter Tholen\fnref{A}}

\ead{tholen@mathstat.yorku.ca}

\author{Jiyu Wang\fnref{B}}

\ead{gateswjy@gmail.com}

\address{Department of Mathematics and Statistics, York University, Toronto, Ontario, Canada, M3J 1P3}
\address{\em Dedicated to Ale\v{s} Pultr on the occasion of his eightieth birthday}

\fntext[A]{The first author is supported by a Discovery Grant of the Natural Sciences and Engineering Research Council (NSERC) of Canada.}
\fntext[B]{The second author held a Dean's Undergraduate Research Award by the Faculty of Science of York University while undertaking research for this paper.}

\begin{abstract}
Metric approximate categories, or metagories, for short, are metrically enriched graphs. Their structure assigns to every directed triangle in the graph a value which may be interpreted as the area of the triangle; alternatively, as the distance of a pair of consecutive arrows to any potential candidate for their composite. These values may live in an arbitrary commutative quantale. Generalizing and extending recent work by Aliouche and Simpson, we give a condition for the existence of an Yoneda-type embedding which, in particular, gives the isometric embeddability of a metagory into a metrically enriched category. The generality of the value quantale allows for applications beyond the classical metric context.
\end{abstract}

\begin{keyword}
 quantale \sep $\sV$-metric space \sep $\sV$-metric category  \sep $\sV$-metagory \sep $\sV$-contractor \sep isometry\sep natural transformer \sep $\sV$-distributor \sep Yoneda $\sV$-contractor.


\MSC[2010] 	 54E35, 18D20, 06F07, 54E40, 18B99.

\end{keyword}

\end{frontmatter}


\section{Introduction}
In Part III of \cite{Menger1928}, entitled {\em Entwurf einer Theorie der $n$-dimensionalen Metrik} (Sketch of a theory of the $n$-dimensional metric), Menger discusses various conditions to be satisfied by an $n$-{\em metric}. Chief among them is the {\em Simplexungleichung} (simplex inequality) and the {\em schwache Nullbedingung} (weak zero condition), which respectively generalize the triangle inequality and the condition that self-distances must be zero --- from the 1-dimensional to the $n$-dimensional case, that is: from a standard metric for pairs of points to functions of $(n+1)$-tuples of points, measuring the ``volume" of the body they span. These two conditions are also part of G\"ahler's \cite{Gahler1963} notion of {\em 2-metric space}, in which the simplex inequality becomes the {\em tetrahedral inequality}, and which gained renewed attention in the work of Aliouche and Simpson \cite{AlioucheSimpson2012}. In their paper \cite{AlioucheSimpson2017} on {\em approximate categorical structures}, these authors take the important step of allowing arbitrary arrows to replace, geometrically speaking,  the three straight edges of triangles spanned by a triple of points. So, they consider a graph (with directed edges and a distinguished loop at each vertex) that comes equipped with a real-valued function which measures the ``area" of all directed triangles
$$\bfig\Atriangle/<-`->`->/<500,300>[y`x`z;f`g`a]\efig$$
of fitting triples of (potentially rather ``wavy") arrows of the given graph, obeying, among other conditions, two tetrahedral-like inequalities and two self-distance axioms. The principal result of their paper gives sufficient conditions for the isometric embeddability of an approximate categorical structure $\mathbb X$ into a metrically enriched category in which, unlike in ${\mathbb X}$, fitting arrows may always be composed, and where the composition is compatible with the distance function for parallel arrows.

The category of (Fr\'echet) metric spaces and their, in a weak sense, {\em contractive} maps (also known as {\em non-increasing, short}, or {\em 1-Lipschitz} maps) is well known for its shortcomings, such as the absence of infinite products and of internal function spaces, both caused by the non-admittance of infinite distances. Even if one is interested only in ordinary metric spaces, it seems advantageous to work at least  initially in a larger, but categorically well-behaved, environment, a point amply emphasized by Lawvere \cite{Lawvere1973}. Doing otherwise leads, as in many of the works cited, to the addition of other auxiliary conditions that may also be problematic from a categorical point of view. Hence, in this work, when talking about real-valued metrics, we allow the value $\infty$ and impose just three conditions, namely that self-distances be zero, the triangle inequality hold, and distances be symmetric. This makes for a {\em topological} \cite{Adamek1990, Borceux1994, MonTop} and, hence, complete and cocomplete category over $\Set$ which, moreover is {\em symmetric monoidal closed} \cite{Kelly1982}, {\em i.e.}, allows for the formation of function spaces. Imposing the separation condition, thus requiring points with zero distance to be equal, destroys topologicity but still makes for a well-behaved reflective and, hence, complete and cocomplete  subcategory which is closed under the formation of function spaces. (The paper \cite{Weiss2012} offers further perspectives on generalized metrics.)

{\em Metric approximate categories}, as introduced in this paper and nick-named {\em metagories} (with stress on the second syllable) for short, follow these categorical guidelines and therefore  free the Aliouche-Simpson notion of approximate categorical structure of conditions that appear to be detrimental when studying the main properties of the structure. Hence, a $[0,\infty$]-valued metagory is a graph ({\em i.e.}, a ``category without composition"), equipped with an ``area'' function for directed triangles, which must satisfy the Aliouche-Simpson versions of Menger's (2-)simplex inequality and his weak zero condition. As already observed in the slightly more restrictive context of \cite{AlioucheSimpson2017}, every category enriched in the category of metric spaces (in the generalized categorical sense) gives a metagory, and the sets of parallel edges in a metagory carry a compatible metric (which, in particular, must automatically be symmetric). A major point that we add here to their theory is that (small) metagories, just like Lawvere metric spaces or small metrically enriched categories, form a symmetric monoidal-closed category, thus allowing for the formation of internal function spaces (see Theorem \ref{main thm}). To that end we offer a generalized notion of natural transformation between {\em contractors} (= morphisms) of metagories, which differs from a proposal in \cite{AlioucheSimpson2017} (see their discussion item 12.8). Armed with this tool, we are able to give the construction of the metagorical substitute for the Yoneda embedding of so-called {\em transitive} metagories, which in turn leads us to a generalization and alternative proof of the above-mentioned Aliouche-Simpson embeddability result (see Corollary \ref{main cor}). For that we utilize a well-known result by Wyler on lifting of adjoints along topological functors \cite{Wyler1971}.

The second major point of this paper is that, like in \cite{AkhvledianiClementinoTholen2010}, we allow ``distance" and ``area" values to live in an arbitrary commutative and unital {\em quantale}, not just in $[0,\infty]$. Hence, we replace the complete lattice $([0,\infty],\geq)$ and its monoid operation $+$ by any complete lattice $(\sV,\leq)$ which comes with a commutative and associative binary operation $\otimes$ preserving suprema in each variable, and which has a neutral element $\sk$. Hence, rather than being just non-negative real numbers or $\infty$, the values of the area function of a $\sV$-{\em metagory} could be logical truth values, elements of so-called frames (as intensively studied by Pultr and his coauthors, see in particular \cite{PicadoPultr2012}), or probability distribution functions (see \cite{MonTop}), to name only a few. This more general approach not only increases the potential range of applications of the theory developed, but equally importantly, it helps clarifying which properties of the rich structure of the (extended) real half-line are essential for establishing the fundamentals of a reasonably well-rounded theory. 

We conclude this article with a brief discussion of the above-mentioned transitivity condition vis-\`a-vis other potentially important subtypes of $\sV$-metagories, in particular those which allow one to find candidates for the composite of consecutive arrows up to a given error margin $\varepsilon$ (Section 8). We also offer (in Section 9) a potential blueprint for the notion of higher-dimensional approximate categorical structures, by pointing to the fact that the symmetric monoidal-cloased category of small $\sV$-metagories is enriched in itself, a full proof of which will appear in \cite{Tholen2019}.


\section{$\sV$-metric spaces and $\sV$-metric categories}
Throughout the paper $\sV$ denotes a {\em quantale} \cite{Rosenthal1990} which we tacitly assume to be commutative and unital; that is, $(\sV,\leq)$ is a complete lattice with a monoid structure, given by an associative and commutative binary operation $\otimes$ with a neutral element $\sk$, such that $\otimes$ distributes in each variable over arbitrary suprema:
$$u\otimes\bigvee_{i\in I}v_i=\bigvee_{i\in I}u\otimes v_i$$
for all $u, v_i \;(i\in I)$ in $\sV$. (Note that there is no requirement for $\sk$ vis-a-vis the least or largest elements in $\sV$.)
For every $u\in\sV$, since $u\otimes(-):\sV\to\sV$ preserves suprema, this map has a right adjoint, denoted by $u\multimap(-)$, whose value at $v\in\sV$ is characterized by
$$z\leq u\multimap v\iff z\otimes u\leq v$$
for all $z\in\sV$. A {\em homomorphism} of quantales is a map preserving all suprema and the monoid structure. {\bf Qnt} denotes the category of quantales and their homomorphisms.

\begin{exmp}\label{quantales}
(1) The {\em trivial} quantale $\sf 1$ has just one element and serves as a terminal object in {\bf Qnt}. The Boolean 2-chain $\sf 2=\{{\rm false}<{\rm true}\}$ with $\otimes={\rm min}$ is an initial object in {\bf Qnt}. The power set ${\sf 2}^X$ of a set $X$ becomes a quantale with $\otimes=\cap$ and serves as a direct product of $X$-many copies of $\sf 2$ in {\bf Qnt}. More generally, every {\em frame} ({\em i.e.}, every complete lattice in which finite infima distribute over arbitrary suprema) is a quantale. In particular, the open sets of a topological space form a quantale.
 
(2) The principal example of a quantale in this paper is the {\em Lawvere quantale} (see \cite{Lawvere1973}) ${\mathbb R}_+=([0,\infty],\geq, +,0)$, given by the extended real half line, ordered by $\geq$ (so that $\infty$ becomes the least and $0$ the largest element of the quantale), provided with the addition, naturally extended by 
$u+\infty=\infty +u=\infty$, for all $u\in[0,\infty]$. Then  $u\multimap v={\rm max}\{v-u,\,0\}$ for $u,v<\infty$, while $\infty\multimap v=0$ and $u\multimap \infty=\infty$ for all $v,u\in\sV, u<\infty.$ (These rules are easily memorized as the only homomorphism ${\sf 2}\to{\mathbb R}_+$ interprets $\infty$ as ${\rm false}$ and $\multimap$ as implication.)

(3) The quantale ${\mathbb R}_+$ is, of course, isomorphic to the unit interval $[0,1]$, ordered by the natural $\leq$ and provided with the multiplication.
Both, ${\mathbb R}_+$ and $[0,1]$ are embeddable into the quantale $ {\Delta}$ of all {\em distance distribution functions} $\varphi: [0,\infty]\to[0,1]$, required to satisfy the left-continuity condition $\phi(\beta)={\rm sup}_{\alpha<\beta}\phi(\alpha)$, for all $\beta\in [0,\infty]$. Its order is inherited from $[0,1]$, and its monoid structure is given by the commutative
{\em convolution} product $(\phi\otimes\psi)(\gamma)={\rm sup}_{\alpha+\beta\leq\gamma}\phi(\alpha)\psi(\beta)$;
the $\otimes$-neutral function $\kappa$ satisfies $\kappa(0)=0$ and $\kappa(\alpha)=1$ for all $\alpha >0$.

The quantale homomorphisms
$\sigma:[0,\infty]\to {{\Delta}}$
and $\tau:[0,1]\to {{\Delta}}$,
defined by $\sigma(\alpha)(\gamma)=0$ if $\gamma\leq\alpha$, and $1$ otherwise, and
$\tau(u)(\gamma)=u$ if $\gamma>0$, and $0$ otherwise, present ${\Delta}$ as a coproduct of
${\mathbb R}_+$ and $[0,1]$ in the category $\bf{Qnt}$, since every $\varphi\in {{\Delta}}$ has a presentation $\varphi={\rm sup}_{\gamma\in[0,\infty]}\sigma(\gamma)\otimes\tau(\varphi(\gamma))$.
\end{exmp}

The language built around $\sV$-metric spaces as described next is guided by the principal example (2), which requires us to {\em ``mentally invert the inequality sign of $\sV$" whenever we want to think of the given statement geometrically}. 

\begin{defn}\label{V-Met}
A $\sV$-{\em metric space} is a set $X$ together with a $\sV$-{\em metric}, {\em i.e.}, a map $d=d_X:X\times X\to \sV$ satisfying the conditions 
\begin{itemize}
\item[\;1.] $\sk\leq d(x,x),$
\item[\;2.] $d(x,y)\leq d(y,x),$
\item[\;3.] $d(x,y)\otimes d(y,z)\leq d(x,z),$
\end{itemize}
\noindent for all $x,y,z\in X$. (Of course, in condition 2 one actually has equality; same for condition 1, if $\sk$ is the top element of $\sV$.)
 $X$ is {\em separated} if, in addition, the implication

4. $\sk \leq d(x,y)\Lra x=y$

\noindent holds for all $x,y\in X$. A map $f:X\to Y$ of $\sV$-metric spaces is $(\sV$-){\em contractive}, or a {\em contraction}, if
$$d_X(x,x')\leq d_Y(fx,fx')$$
for all $x,x'\in X$; if $\leq$ may be replaced by $=$, $f$ is an {\em isometry}. The category of $\sV$-metric spaces and their contractions is denoted by ${\bf Met}_{\sV}$, and its full subcategory of separated spaces by ${\bf SepMet}_{\sV}$.
\end{defn}

It is easy to verify that ${\bf SepMet}_{\sV}$ {\em is reflective in} ${\bf Met}_{\sV}$: the {\em separated reflection of} $(X,d)$ provides the set $X/\!\sim$ obtained from the equivalence relation $(x\sim y\iff \sk\leq d(x,y))$, with the $\sV$-metric that makes the projection an isometry. 

\begin{exmp}
(1) ${\bf Met}_{\sf 1}$ is (equivalent to) the category $\Set$ of sets, while ${\bf Met}_{\sf 2}$ is the category of sets equipped with an equivalence relation; morphisms are maps preserving the equivalence relation of their domains. ${\bf SepMet}_{2}$ returns the category $\Set$ again.

(2) ${\bf Met}_{{\mathbb R}_+}$ is the category of {\em symmetric Lawvere metric spaces}, {\em i.e.}, of sets $X$ with a $[0,\infty]$-valued symmetric distance function satisfying the triangle inequality and making all self-distances $0$. Its separated objects differ from ordinary metric spaces only insofar as infinite distances are permitted.

(3) Under the isomorphism ${\mathbb R}_+\cong([0,1],\leq,\cdot, 1)$ of quantales, distances may be interpreted as  probabilities. Then ${\bf Met}_{\Delta}$ is the category of {\em probabilistic metric spaces}, studied under various degrees of generality in, for example, \cite{SchweizerSklar1983, HofmannReis2013, MonTop, Jager2015}.
\end{exmp}

\begin{rem}\label{V-metric remarks}
(1) The forgetful functor ${\bf Met}_{\sV}\to\Set$ is well known (see, for example, \cite{MonTop}) to be {\em topological} (in the sense of \cite{Adamek1990, Borceux1994}): for a family of set maps $f_i: X \to Y_i\;(i\in I)$, with every $Y_i$ carrying a $\sV$-metric $d_i$, one defines the {\em initial} $\sV$-metric $d$ on $X$ by
$$d(x,x')=\bw_{i\in I}d_i(f_ix,f_ix'),$$
for all $x, x'\in X$. Even when all $d_i$ are separated, $d$ need not be so. In order to make the $\sV$-metric space $X$ separated, one has to apply the separated reflector to $X$. As a topological category over $\Set$ (see Section 5), the category ${\bf Met}_{\sV}$ {\em is complete and cocomplete}, and so is its full reflective subcategory ${\bf SepMet}_{\sV}$.

(2) When one considers $\sV$ as a symmetric monoidal-closed category, with arrows given by $\leq$ and the ``internal hom" by $\multimap$, the objects of the category $\VCat$ of small $\sV$-{\em categories} ({\em i.e.}, categories enriched in $\sV$, see \cite{Lawvere1973, Kelly1982, MonTop}) may be described like the $\sV$-metric spaces, except that the symmetry condition 2 gets dropped; its morphisms, the $\sV$-{\em functors}, are defined like the $\sV$-contractions. This makes ${\bf Met}_{\sV}$ a full and coreflective subcategory of $\VCat$, with the coreflector symmetrizing the structure $a$ of a $\sV$-category $X$ to $d$, where $d(x,y) = a(x,y)\wedge a(y,x)$ for all $x,y\in X$. In particular, $\sV$ itself, which is a $\sV$-category with its internal hom $\multimap$, becomes a separated $\sV$-metric space, with its $\sV$-metric given by
$$d_{\sV}(u,v)=(u\multimap v)\wedge(v\multimap u).$$
Note that, for $\sV={\mathbb R}_+$ and $u,v<\infty$, $d_{\sV}(u,v)=|u-v|$ is the usual Euclidean distance.

(3) $\VCat$ is a symmetric monoidal-closed category, and ${\bf Met}_{\sV}$ is closed under forming tensor products and internal homs in $\VCat$. Hence, ${\bf Met}_{\sV}$ {\em is symmetric monoidal closed} again, with the monoidal and hom structures described, as follows:

$\bullet$ the tensor product $X\otimes Y$ has underlying set $X\times Y$, and its $\sV$-metric is given by $$d_{{\mathbb X}\otimes{\mathbb Y}}((x,y),(x',y'))=d_X(x,x')\otimes d_Y(y,y');$$ for $\sV={\mathbb R}_+$, this is the ``$+$-metric" on $X\times Y$;

$\bullet$ the hom-object $[X,Y]$ has underlying set ${\bf Met}_{\sV}(X,Y)$ ({\em i.e.} all $\sV$-contractions $X\to Y$), and its $\sV$-metric is given by $$d_{[X,Y]}(f,g)=\bw_{x\in X}d_Y(fx,gx)$$ (for $f,g:X\to Y$); for $\sV={\mathbb R}_+$, this is, of course, the ``sup-metric" on the function space of all contractions $X\to Y$.

The tensor-neutral space $\sf I$ is a singleton space whose only point has self-distance $\sk$ (the $\otimes$-neutral element of $\sV$). The tensor-hom adjunction means that, for all $\sV$-metric spaces $X,Y,Z$, a map $f:Z\otimes X\to Y$ is a $\sV$-contraction if, and only if, the map $f^{\sharp}:Z\to[X,Y]$ with $f^{\sharp}(z)(x)=f(z,x)$ is a well-defined $\sV$-contraction.

(4) For $\sV$-metric spaces $X$ and $Y$, separatedness gets inherited by $[X,Y]$ from $Y$. If $\sV$ is {\em integral} (so that the $\otimes$-neutral element $\sk$ is the top element in $\sV$) and $X$ and $Y$ are both separated, then also $X\otimes Y$ is separated.

(5) The $\sV$-metric $d$ of a $\sV$-metric space is contractive as a map $X\otimes X\to\sV$ where, of course, the domain is structured as in (3) and the codomain as in (2). This is an easy consequence of the symmetry and the triangle inequality for $d$.
\end{rem}

With the knowledge that ${\bf Met}_{\sV}$ is symmetric monoidal closed, it is natural to consider categories enriched in ${\bf Met}_{\sV}$.
\begin{defn}\label{metric cat}
A category $\mathbb X$ is called $\sV$-{\em metric} if it is enriched in ${\bf Met}_{\sV}$; that is, if for all $\mathbb X$-objects $x,y$, the hom-set ${\mathbb X}(x,y)$ is a $\sV$-metric space such that all composition maps
$${\mathbb X}(x,y)\otimes{\mathbb X}(y,z)\to{\mathbb X}(x,z)$$
are $\sV$-contractive; when denoting all metrics by $d$, this amounts to asking
$$d(f,f')\otimes d(g,g')\leq d(g\cdot f,g'\cdot f')$$
or, equivalently,
$$d(f,f')\leq d(g\cdot f,g\cdot f')\quad\text{and}\quad d(g,g')\leq d(g\cdot f,g'\cdot f)$$
for all $f,f':x\to y$ and $ g,g':y\to z$ in $\mathbb X$. The $\sV$-metric category $\mathbb X$ is {\em separated} if all of its hom-sets are separated. A functor $F:{\mathbb X}\to{\mathbb Y}$ of $\sV$-metric categories is $\sV$-{\em contractive} if $d(f,f')\leq d(Ff,Ff')$ for all $f,f':x\to y$ in $\mathbb X$, and {\em isometric} if $\leq$ may be replaced by $=$ in the last inequality. The category of all small $\sV$-metric categories and their $\sV$-contractive functors is denoted by ${\bf Met}_{\sV}$-$\Cat$. 
\end{defn}

\begin{exmp}
(1) Every $\sV$-metric space $X$ may be considered as a small $\sV$-metric category $\mathbb X$ with exactly two objects, $0,1$ say, and ${\mathbb X}(0,1)=X$; the only endomorphisms are identity morphisms whose self-distance is $\sk$. This construction extends functorially and embeds ${\bf Met}_{\sV}$ fully into ${\bf Met}_{\sV}$-$\Cat$.

(2) Being monoidal closed, ${\bf Met}_{\sV}$ is a $\sV$-metric category, with its hom-sets carrying the $\sV$-metric of Remark \ref{V-metric remarks}(3). Similarly, the category ${\bf Ban}_1$ of real (or complex) Banach spaces with linear operators of norm at most $1$ is an ${\mathbb R}_+$-metric category.
\end{exmp}

\begin{rem}\label{VMetCat}
For the same general reasons as given in Remark \ref{V-metric remarks}(3), the category ${\bf Met}_{\sV}$-$\Cat$ is symmetric monoidal closed:

$\bullet$ the tensor product ${\mathbb X}\otimes{\mathbb Y}$ of $\sV$-metric categories rests on the direct product ${\mathbb X}\times{\mathbb Y}$ of the underlying ordinary categories and provides its hom-sets with the $\sV$-metric given by
$$d_{{\mathbb X}\otimes{\mathbb Y}}((f,g),(f',g'))=d_{\mathbb X}(f,f')\otimes d_{\mathbb Y}(g,g'),$$
for all $f,f':x\to y$ in $\mathbb X$ and $g,g':z\to w$ in $\mathbb Y$;

$\bullet$ the hom-category $[{\mathbb X},{\mathbb Y}]$ of (small) $\sV$-metric categories has as underlying category the category of all $\sV$-contractive functors ${\mathbb X}\to{\mathbb Y}$ and their natural transformations; its hom-sets carry the $\sV$-metric defined by
$$d_{[{\mathbb X},{\mathbb Y}]}(\alpha,\alpha')=\bw_{x\in {\rm{ob}}{\mathbb X}}d_{\mathbb Y}(\alpha_x,\alpha'_x),$$
for all natural transformations $\alpha,\alpha':F\to G$ of $\sV$-contractive functors $F,G:{\mathbb X}\to {\mathbb Y}$; it is separated when $\mathbb Y$ is separated. 

The tensor-hom adjunction entails that, for all (small) $\sV$-metric categories ${\mathbb X},{\mathbb Y}, {\mathbb Z}$, the $\sV$-contractive functors
 $F:{\mathbb Z}\otimes{\mathbb X}\to{\mathbb Y}$ correspond naturally and bijectively to the $\sV$-contractive functors $F^{\sharp}:{\mathbb Z}\to[{\mathbb X},{\mathbb Y}]$.
\end{rem}

\section{$\sV$-metagories}
$\sV=(\sV,\leq,\otimes,\sk)$ continues to be a unital commutative quantale. To motivate the the key notion of this paper, we first note that a {\em G\"{a}hler-2-metric space} \cite{Gahler1963} may be described as a set $X$ that comes with an {\em area} (or {\em measure}) function $d:X\times X\times X\to{\mathbb R}$ satisfying the following conditions:
\begin{itemize}
\item[(a)] $\forall x,y\in X\; (\,x=y\iff\forall z\in X:d(x,y,z)=0\,);$
\item[(b)] $\forall x,y,z\in X:\, d(x,y,z)$ is independent of the order of the arguments $x,y,z$;
\item[(c)] $\forall x,y,z,w\in X: d(x,y,w)\leq d(x,y,z)+d(y,z,w)+d(x,z,w)$.
\end{itemize}
It is easy to show that $d$ takes necessarily non-negative values. A Lawvere-style generalization of this notion should allow $d$ to take the value $\infty$ and abandon the separation condition given by ``$\Leftarrow$" of (a), as well as the symmetry condition (b). However, as will become clear in \ref{metagory def} below, in the absence of (b) it becomes natural, and important, to state the two remaining conditions, namely that degenerate triangles have zero area (as in ``$\Rightarrow$" of (a)), and that the area of one face of a tetrahedron cannot exceed the sum of the areas of the other three faces (as in (c)), in two symmetric forms. Doing this in the $\sV$-context leads us to the following definition.
\begin{defn}\label{2Met def}
A $\sV$-{\em 2-metric space} is a set $X$ equipped with a function $d:X\times X\times X\to\sV$ satisfying the conditions
\begin{itemize}
\item[1.] $\sk\leq d(x,x,y),\quad\sk\leq d(x,y,y)$,
\item[2.] $d(x,y,z)\otimes d(y,z,w)\otimes d(x,y,w)\leq d(x,z,w),\quad d(x,y,z)\otimes d(y,z,w)\otimes d(x,z,w)\leq d(x,y,w),$
\end{itemize}
for all $x,y,z,w\in X$. A {\em contraction} $f:(X,d_X)\to (Y,d_Y)$ must satisfy
$$d_X(x,y,z)\leq d_Y(fx,fy,fz)$$
for all $x,y,z\in X$. This defines the category ${\bf 2Met}_{\sV}$.
\end{defn}

\begin{exmp}\label{2Met exmp}
(1) Every G\"{a}hler-2-metric space \cite{Gahler1963} 
is a ${\mathbb R}^+$-2-metric space. In particular, ${\mathbb R}^n$ is such a space when endowed with the Euclidean 2-metric, which assigns to each triple of points the area of the triangle they span.

(2) Here is a novel way of considering any $\sV$-metric space $(X,d)$ as ${\sf 2}$-2-metric space $(X,\tilde{d})$ (with the quantale $\sf 2$ of Example \ref{quantales}(1)), provided that $\sV$ is integral: for all $x,y,z\in X$, declare $\tilde{d}(x,y,z)$ to be true precisely when
$$d(x,y)\otimes d(y,z) = d(x,z)$$
(``\,$y$ lies on the line segment $\overline{xz}$\,"). The verification of the required conditions for $\tilde{d}$ is easy and, actually, works for any function $d:X\times X\to {\sV}$ satisfying $d(x,x)=\sk$ for all $x \in X$. Every isometry of $\sV$-metric spaces gives a contraction of the induced $\sf 2$-2-metric spaces.
\end{exmp}

We are now ready to give the $\sV$-modification of the Aliouche-Simpson notion (\cite{AlioucheSimpson2017}) of approximate categorical structure, which is subsumed by the general notion when we put $\sV={\mathbb R}_+$ and don't allow infinite values.
\begin{defn}\label{metagory def}
A $\sV$-$\underline{met}${\em ric} $\underline{a}${\em pproximate cate}$\underline{gory}$ or, for short, a $\sV$-{\em metagory}, $\mathbb X$, is given by

$\bullet$ a (potentially large) {\em graph}, consisting of a class ${\rm ob}{\mathbb X}$ of {\em vertices} or {\em objects} and a family of disjoint {\em hom-sets} ${\mathbb X}(x,y)$ of directed {\em edges} or {\em morphisms} $x\to y\;(x,y\in{\rm ob}{\mathbb X})$;

$\bullet$ a distinguished morphism $1_x$ in the hom set ${\mathbb X}(x,x)$ for every $x\in{\rm ob}{\mathbb X}$, called the {\em identity morphism} on $x$;

$\bullet$ a family $\delta=\delta_{\mathbb X}=(\delta_{x,y,z})_{x,y,z\in{\rm{ob}}{\mathbb X}}$ 
of {\em distance} or {\em area} functions
$$\delta=\delta_{x,y,z}:{\mathbb X}(x,y)\times{\mathbb X}(y,z)\times{\mathbb X}(x,z)\to\sV,$$
which are subject to two {\em identity laws} and two {\em associativity laws}:

\begin{tabular}{ll}
&\\
$\sk\leq\delta(1_x,f,f)$ (left identity law),&$\sk\leq\delta(f,1_y,f)$ (right identity law),\\
$\delta(f,g,a)\otimes\delta(g,h,b)\otimes\delta(f,b,c)\leq\delta(a,h,c)$&(left associativity law),\\
$\delta(f,g,a)\otimes\delta(g,h,b)\otimes\delta(a,h,c)\leq\delta(f,b,c)$&(right associativity law),\\
&
\end{tabular}

\noindent for all morphisms $f:x\to y,\,g:y\to z,\,h:z\to w, \,a:x\to z,\, b:y\to w,\,c:x\to w$.
\bigskip

\begin{tabular}{lll}
\quad&\quad&\\
&$$\begin{tikzpicture}[>=stealth]
\path (0,0) node(x) {$x$}
(3.5,0) node(w) {$w$} 
(1.15,0) node(y) {$y$}
(2.3,0) node(z) {$z$}
(0.6,0.25) node(f) {$f$}
(1.7,0.25) node(g) {$g$}
(2.9,0.25) node(h) {$h$}
(1.15,1.15) node(a) {$a$}
(2.3,-1.15) node(b) {$b$}
(2.5,1.4) node(c) {$c$}
(0.9,-1.4) node(c') {$c$};

\draw [->] (0,.2) arc (180:0:50pt) ;
\draw [->] (0,-0.2) arc (-180:0:50pt);
\draw [->] (0,.2) arc (180:0:33.3pt) ;
\draw [->] (1.15,-0.2) arc (-180:0:33.3pt);
\draw [->] (0,.2) arc (180:0:50pt) ;
\draw[-> ] (0.25,0) -- (1.02,0);
\draw[-> ] (1.4,0) -- (2.1,0);
\draw[-> ] (2.5,0) -- (3.3,0);

\end{tikzpicture}$$
\quad\quad\quad\quad\quad\quad&
$$\begin{tikzpicture}
\draw (0,-0.25) - - (0.8,1);
\draw (0.8,1) - - (4.6,1);
\draw (0,-0.25) - - (3.8,-0.25);
\draw (3.8,-0.25) - - (4.6,1);

\path 
(0.4,0.1) node(x) {$x$}
(3.7,0.2) node(z) {$z$}
(2.5,0.8) node(y) {$y$}
(2.5,3) node(w) {$w$}
(1.5,0.7) node(f) {$f$}
(3.1,0.7) node(g) {$g$}
(2.2,0.06) node(a) {$a$}
(1.2,1.8) node(c) {$c$}
(2.2,1.7) node(b) {$b$}
(3.35,1.8) node(h) {$h$};

\draw[->] (2.65,0.7) -- (3.5,0.3);
\draw[->] (0.5,0.26) -- (2.35,0.75);
\draw[->] (0.5,0.22) -- (3.4,0.22);
\draw[->] (0.5,0.28) -- (2.3,2.8);
\draw[->] (2.5,1.05) -- (2.5,2.8);
\draw[->] (3.65,0.4) -- (2.7,2.85);

\end{tikzpicture}$$\\
&\quad&
\end{tabular}

\bigskip

\noindent We refer to the tetrahedron by the 6-tuple $(f,g,h;a,b,c)$.

A ($\sV$-){\em contractor} $F:{\mathbb X}\to{\mathbb Y}$ of $\sV$-metagories is a morphism of graphs, given by maps $F:{\rm ob}{\mathbb X}\to{\rm ob}{\mathbb Y}$ and $F=F_{x,y}:{\mathbb X}(x,y)\to{\mathbb Y}(Fx,Fy)\;(x,y\in{\rm ob}{\mathbb X})$, preserving identity morphisms and contracting areas ($x,f,g, a$ in ${\mathbb X}$ as above):
$$F(1x)=1_{Fx},\quad\quad\delta_{\mathbb X}(f,g,a)\leq\delta_{\mathbb Y}(Ff,Fg,Fa);$$
$F$ is {\em isometric} if $\leq$ can be replaced by $=$ in the last inequality. We denote by 
${\bf Metag}_{\sV}$ the category of small $\sV$-metagories ({\em i.e.}, those $\mathbb X$ with ${\rm ob}{\mathbb X}$ a set) and their $\sV$-contractors.
\end{defn}

\begin{exmp}\label{chaotic metag}
Every $\sV$-2-metric space $(X,d)$ defines the {\em chaotic} $\sV$-metagory $(\mathbb X,\delta)$ with ${\rm ob}{\mathbb X}=X$ and each hom-set ${\mathbb X}(x,y)$ containing exactly one morphism, which we call $(x,y)$; one simply puts 
$$\delta((x,y),(y,z),(x,z))=d(x,y,z)$$ for all $x,y,z\in X$. Obviously, as every contraction of $\sV$-2-metric spaces becomes a $\sV$-contractor of $\sV$-metagories, we obtain a full embedding
$${\bf 2Met}_{\sV}\to{\bf Metag}_{\sV},$$
which is actually reflective: its left adjoint provides the object set $X$ of a $\sV$-metagory $({\mathbb X},\delta)$ with the $\sV$-2-metric $d$ given by
$$d(x,y,z)=\bigvee_{f:x\rightarrow y, g:y\rightarrow z, a:x\rightarrow z}\delta(f,g,a)$$
for all $x,y.z\in X$.
\end{exmp}

In what follows, of greater importance than the full embedding of Example \ref{chaotic metag} is the faithful (but generally not full) {\em induced $\sV$-area functor}
$${\bf Met}_{\sV}\text{-}\Cat\to{\bf Metag}_{\sV}$$
whose definition we record next (generalizing \cite{AlioucheSimpson2017}, Prop. 5.7).
\begin{prop}\label{metag prop}
Every $\sV$-metric category $\mathbb X$ can be considered as a $\sV$-metagory when one puts $$\delta(f,g,a):=d(g\cdot f,a)$$ for all morphisms $f:x\to y,\,g:y\to z,\,a:x\to z$ in $\mathbb X$. The $\sV$-contractive functors of $\sV$-metric categories then become $\sV$-contractors.
\end{prop}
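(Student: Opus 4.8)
The plan is to verify the four axioms of Definition~\ref{metagory def} directly for the proposed area function $\delta(f,g,a)=d(g\cdot f,a)$, taking the underlying graph and the identity morphisms of the metagory to be those of $\mathbb{X}$, and then to read the contractor inequalities off the definition of a $\sV$-contractive functor. The two identity laws are immediate: in any category $f\cdot 1_x=f=1_y\cdot f$, so $\delta(1_x,f,f)=\delta(f,1_y,f)=d(f,f)$, and $\sk\leq d(f,f)$ by axiom~1 of Definition~\ref{V-Met}.

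The associativity laws are the only point with any content, and the key input is the two-sided compatibility of composition with the hom-metrics recorded in Definition~\ref{metric cat}: whiskering by a fixed morphism on either side is a $\sV$-contraction. Given tetrahedron data $f:x\to y$, $g:y\to z$, $h:z\to w$, $a:x\to z$, $b:y\to w$, $c:x\to w$, I would whisker $d(g\cdot f,a)$ by $h$ on the left and $d(h\cdot g,b)$ by $f$ on the right, obtaining $d(g\cdot f,a)\otimes d(h\cdot g,b)\leq d(h\cdot g\cdot f,h\cdot a)\otimes d(h\cdot g\cdot f,b\cdot f)$; applying symmetry (axiom~2) to the first factor and then the triangle inequality (axiom~3) collapses the right-hand side to $d(h\cdot a,b\cdot f)$. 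Tensoring this estimate once more — on the right with $d(b\cdot f,c)$ and using the triangle inequality again — yields $\delta(f,g,a)\otimes\delta(g,h,b)\otimes\delta(f,b,c)\leq d(h\cdot a,c)=\delta(a,h,c)$, the left associativity law; flipping the same estimate by symmetry to $d(b\cdot f,h\cdot a)$ and tensoring on the right with the given $d(h\cdot a,c)$ yields $\delta(f,g,a)\otimes\delta(g,h,b)\otimes\delta(a,h,c)\leq d(b\cdot f,c)=\delta(f,b,c)$, the right associativity law. Associativity and monotonicity of $\otimes$ are used silently throughout.

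Finally, for a $\sV$-contractive functor $F:\mathbb{X}\to\mathbb{Y}$: it preserves identities because it is a functor, and it preserves composition, so using $\sV$-contractivity of $F$ on hom-sets one gets $\delta_{\mathbb{X}}(f,g,a)=d(g\cdot f,a)\leq d(F(g\cdot f),Fa)=d(Fg\cdot Ff,Fa)=\delta_{\mathbb{Y}}(Ff,Fg,Fa)$, so $F$ is a $\sV$-contractor. I would add the remark that this assignment respects identities and composites of functors and is the identity on underlying morphisms, hence defines a faithful functor ${\bf Met}_{\sV}\text{-}\Cat\to{\bf Metag}_{\sV}$.

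I do not expect a genuine obstacle; the one place calling for care is the bookkeeping in the associativity laws — keeping straight which side each whiskering happens on, and the order of the three arguments of $\delta$, so that the two-sided compatibility of Definition~\ref{metric cat} and the symmetry axiom are invoked on the correct factors.
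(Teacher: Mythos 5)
Your proposal is correct and follows essentially the same route as the paper: identity laws from $f\cdot 1_x=f=1_y\cdot f$ and $\sk\leq d(f,f)$, and both associativity laws by combining symmetry of $d$, contractivity of composition (whiskering by $h$ resp.\ $f$), and the triangle inequality, with the functor statement read off directly from $F(g\cdot f)=Fg\cdot Ff$ and hom-contractivity. The only differences are cosmetic: you derive the intermediate estimate $d(g\cdot f,a)\otimes d(h\cdot g,b)\leq d(h\cdot a,b\cdot f)$ and reuse it for both laws, whereas the paper chains the three factors at once and leaves right associativity to the reader.
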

\begin{proof}
Trivially, $\sk\leq d(f,f)=d(f\cdot 1_x,f)=\delta(1_x,f,f)$, and likewise $\sk\leq\delta(f,1_y,f)$, for all $f:x\to y$. Using successively the symmetry, composition contractivity, and the triangle inequality of the $\sV$-metric $d$ one obtains the left associativity law for $\delta$ from
\begin{align*}
d(g\cdot f,a)\otimes d(h\cdot g,b)\otimes d(b\cdot f,c)&\leq d(a,g\cdot f)\otimes d(h\cdot g,b)\otimes d(b\cdot f,c)\\
& \leq d(h\cdot a,h\cdot g\cdot f)\otimes d(h\cdot g\cdot f, b\cdot f)\otimes d(b\cdot f,c)\\
& \leq d(h\cdot a, c),
\end{align*}
with all morphisms as in Definition \ref{metagory def}. Right associativity is shown similarly. The statement
on $\sV$-contractive functors is obvious.
\end{proof}

Next we point out that the hom-sets of a $\sV$-metagory actually carry a $\sV$-metric, considered in \cite{AlioucheSimpson2017} (Lemma 4.2) for $\sV={\mathbb R}_+$. For its presentation, it is conveneint to use the following terminology.

\begin{defn}
A $\sV$-{\em metric graph} $\mathbb X$ is a graph (as given by the first two bullet points of Definition \ref{metagory def}), such that each of its hom-sets ${\mathbb X}(x,y)$ carries a $\sV$-metric. Their morphisms are identity-preserving graph homorphisms whose hom-maps are $\sV$-contractive. ${\bf Met}_{\sV}\text{-}{\bf Gph}$ denotes the resulting category of small $\sV$-metric graphs.
\end{defn}

\begin{prop}\label{Vgraph prop}
The underlying graph of a $\sV$-metagory $\mathbb X$ becomes $\sV$-metric when one puts
$$d(f,f'):=\delta(1_x,f,f')=\delta(f,1_y,f')$$
for all $f,f':x\to y$ in $\mathbb X$. The $\sV$-contractors of $\sV$-metagories then become $\sV$-contractive morphisms of graphs.
\end{prop}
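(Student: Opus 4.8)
The plan is to prove the statement in two stages. First I would show that the assignment $d(f,f'):=\delta(1_x,f,f')$ is \emph{legitimate} --- that is, that it coincides with $\delta(f,1_y,f')$ and satisfies the three axioms of Definition~\ref{V-Met}; then the assertion about contractors is immediate. The only tools needed throughout are the two identity laws, the two associativity laws, and the monotonicity of $\otimes$ in each variable (which holds because $\otimes$ preserves suprema); the whole argument amounts to instantiating the associativity laws at suitably degenerate tetrahedra and then absorbing factors that dominate $\sk$.

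I would begin with well-definedness, the one genuinely substantive point. Fix $f,f':x\to y$ and substitute the degenerate tetrahedron $(1_x,f,1_y;f,f,f')$ (in the notation of Definition~\ref{metagory def}, with the four vertices read as $x,x,y,y$) into the left associativity law; it becomes
$$\delta(1_x,f,f)\otimes\delta(f,1_y,f)\otimes\delta(1_x,f,f')\leq\delta(f,1_y,f'),$$
and since the first two factors dominate $\sk$ by the identity laws, this collapses to $\delta(1_x,f,f')\leq\delta(f,1_y,f')$. Feeding the same tetrahedron into the right associativity law yields the reverse inequality. Hence $d(f,f')$ is well-defined --- and this is exactly the place where both associativity laws (not just one) are needed.

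With this in hand the metric axioms follow by the same recipe. Axiom~1 is immediate: $\sk\leq\delta(1_x,f,f)=d(f,f)$ is the left identity law. For the triangle inequality, given $f,f',f'':x\to y$, I would apply the right associativity law to the tetrahedron $(1_x,f,1_y;f',f,f'')$; after using well-definedness to identify three of its four faces with $d(f,f')$, $d(f',f'')$, $d(f,f'')$, it reads
$$d(f,f')\otimes\delta(f,1_y,f)\otimes d(f',f'')\leq d(f,f''),$$
and dropping the middle factor (again $\geq\sk$) gives axiom~3. For symmetry (axiom~2), the left associativity law applied to $(1_x,f,1_y;f',f,f)$ collapses to $d(f,f')\leq d(f',f)$ after discarding two factors above $\sk$; as observed right after Definition~\ref{V-Met}, this already forces $d(f,f')=d(f',f)$.

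The assertion on morphisms is then routine. A $\sV$-contractor $F:{\mathbb X}\to{\mathbb Y}$ is by definition an identity-preserving morphism of graphs which contracts areas, so for $f,f':x\to y$ in ${\mathbb X}$ we get
$$d_{\mathbb X}(f,f')=\delta_{\mathbb X}(1_x,f,f')\leq\delta_{\mathbb Y}(F1_x,Ff,Ff')=\delta_{\mathbb Y}(1_{Fx},Ff,Ff')=d_{\mathbb Y}(Ff,Ff'),$$
which says precisely that $F$ is $\sV$-contractive as a morphism of $\sV$-metric graphs. I expect the only real obstacle to be the first step: guessing the degenerate tetrahedra that make the four-face associativity inequalities collapse to the desired two-variable inequalities for $d$. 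Once those substitutions are found, everything else is forced.
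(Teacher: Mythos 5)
Your proof is correct and follows essentially the same route as the paper: the identity $\delta(1_x,f,f')=\delta(f,1_y,f')$ via both associativity laws on the tetrahedron $(1_x,f,1_y;f,f,f')$, axiom 1 from the identity laws, and symmetry and the triangle inequality by instantiating the associativity laws at degenerate tetrahedra and absorbing factors above $\sk$. The only difference is your particular choice of degenerate tetrahedra for symmetry and transitivity (the paper uses $(f,1_y,1_y;f',1_y,f)$ and $(f',1_y,1_y;f,1_y,f'')$), which is immaterial.
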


\begin{proof}
Let us first point out that the (implicitly claimed) identity $\delta(1_x,f,f')=\delta(f,1_y,f')$ follows from an application of the left and right associativity laws to the tetrahedron $(1_x,f,1_y;f,f,f')$. The identity laws give $d(f,f)\geq{\sk}$. Applying the left associativity law to $(f,1_y,1_y;f', 1_y,f)$ one obtains $d(f,f')\leq d(f',f)$. To prove the triangle inequality $d(f,f')\otimes d(f',f'')\leq d(f,f'')$ one may exploit the left associativity law of $(f',1_y,1_y;f,1_y,f'')$. The statement about $\sV$-contractors is obvious.
\end{proof}

\begin{cor} \label{metag cor}
Proposition ${\rm{\ref{Vgraph prop}}}$ describes a faithful functor $${\bf Metag}_{\sV}\to {\bf Met}_{\sV}\text{-}{\bf Gph}$$ which, when precomposed with the induced $\sV$-area functor ${\bf Met}_{\sV}\text{-}\Cat\to{\bf Metag}_{\sV}$, just produces the obvious forgetful functor ${\bf Met}_{\sV}\text{-}\Cat\to {\bf Met}_{\sV}\text{-}{\bf Gph}$.
\end{cor}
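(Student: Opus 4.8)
The plan is to verify the three claimed facts about the functor in turn, each of which is essentially a bookkeeping exercise.

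\emph{Step 1: the assignment is functorial into $\mathbf{Met}_{\sV}\text{-}\mathbf{Gph}$.} By Proposition \ref{Vgraph prop}, every small $\sV$-metagory $\mathbb X$ gets a $\sV$-metric graph structure on its underlying graph, and every $\sV$-contractor $F:{\mathbb X}\to{\mathbb Y}$ is identity-preserving on objects and morphisms and satisfies, for $f,f':x\to y$,
$$d_{\mathbb Y}(Ff,Ff')=\delta_{\mathbb Y}(1_{Fx},Ff,Ff')=\delta_{\mathbb Y}(F1_x,Ff,Ff')\geq\delta_{\mathbb X}(1_x,f,f')=d_{\mathbb X}(f,f'),$$
where the middle equality uses $F1_x=1_{Fx}$ and the inequality is area-contractivity of $F$. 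Hence $F$ is a morphism of $\sV$-metric graphs. Identities and composites are sent to identities and composites since the underlying map of graphs is unchanged, so we have a functor.

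\emph{Step 2: faithfulness.} Two $\sV$-contractors ${\mathbb X}\to{\mathbb Y}$ that agree as morphisms of $\sV$-metric graphs agree in particular as underlying maps on objects and on every hom-set; but a $\sV$-contractor is completely determined by these object- and hom-maps (the area-contractivity condition is a property, not extra data). Hence equality of the images forces equality of the contractors.

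\emph{Step 3: compatibility with the composite through $\mathbf{Met}_{\sV}\text{-}\Cat$.} Let $\mathbb X$ be a small $\sV$-metric category, viewed as a $\sV$-metagory via Proposition \ref{metag prop}, i.e. with $\delta(f,g,a)=d(g\cdot f,a)$. Applying Proposition \ref{Vgraph prop} to this metagory gives, for parallel $f,f':x\to y$,
$$d_{\text{graph}}(f,f')=\delta(1_x,f,f')=d(f\cdot 1_x,f')=d(f,f'),$$
which is exactly the original $\sV$-metric on the hom-set ${\mathbb X}(x,y)$; and the underlying graph and identity-assignment are unchanged throughout. On morphisms of the categories involved, the chain $\mathbf{Met}_{\sV}\text{-}\Cat\to\mathbf{Metag}_{\sV}\to\mathbf{Met}_{\sV}\text{-}\mathbf{Gph}$ leaves the object- and hom-maps untouched at each stage, so the composite is precisely the forgetful functor that remembers only the underlying graph together with its hom-$\sV$-metrics and the identities. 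This completes the proof.

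There is essentially no obstacle here: the only point requiring a moment's care is recognizing, in Step 3, that the $\sV$-metric produced by Proposition \ref{Vgraph prop} on a hom-set of the induced metagory of a $\sV$-metric category coincides with the metric one started with — which is immediate from $f\cdot 1_x=f$ — and, in Step 1, invoking $F1_x=1_{Fx}$ so that $\delta_{\mathbb Y}(1_{Fx},Ff,Ff')$ is a legitimate instance of the area-contractivity inequality rather than something needing separate justification.
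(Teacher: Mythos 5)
Your proposal is correct and follows essentially the same route as the paper: the paper delegates your Step~1 to the statement already made in Proposition~\ref{Vgraph prop} (``the $\sV$-contractors then become $\sV$-contractive morphisms of graphs''), treats faithfulness as immediate since the underlying graph data is untouched, and its one-line proof addresses exactly your Step~3, namely that $\delta(1_x,f,f')=d(f\cdot 1_x,f')=d(f,f')$ recovers the original hom-metric. You have simply written out the routine verifications that the paper declares obvious, and they check out.
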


\begin{proof}
One only has to make sure that the $\sV$-metric of the hom-sets of a $\sV$-metric category does not get changed when passing first to the induced $\sV$-metagory and then to the induced $\sV$-metric graph. But this is obvious.
\end{proof}

With the hom-sets of the $\sV$-metagory ${\mathbb X}$ carrying the $\sV$-metric $d$ of Proposition \ref{Vgraph prop}, one can ask how a change in the hom-sets affects the area function $\delta$ of $\mathbb X$. The following answer generalizes Corollary 4.6 of \cite{AlioucheSimpson2017} to our context.

\begin{prop}\label{delta cont}
For all $f,f':x\to y,\,g,g':y\to z,\,a,a':x\to z$ in the $\sV$-metagory $\mathbb X$ one has
$$d(f,f')\otimes d(g,g')\otimes d(a,a')\otimes\delta(f,g,a)\leq \delta(f',g,',a').$$
\end{prop}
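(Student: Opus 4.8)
The plan is to deduce the inequality from three ``single-replacement'' estimates, each obtained by feeding one suitable degenerate tetrahedron into one of the two associativity laws, and then to chain them. Writing $d$ for the $\sV$-metric of Proposition~\ref{Vgraph prop}, the three estimates are, for all appropriately typed morphisms of $\mathbb X$:
\begin{align*}
&\text{(i)}\quad d(f,f')\otimes\delta(f,g,a)\leq\delta(f',g,a),\\
&\text{(ii)}\quad d(g,g')\otimes\delta(f,g,a)\leq\delta(f,g',a),\\
&\text{(iii)}\quad d(a,a')\otimes\delta(f,g,a)\leq\delta(f,g,a').
\end{align*}

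For (i) I would apply the left associativity law to the tetrahedron $(f,1_y,g;f',g,a)$, which yields $\delta(f,1_y,f')\otimes\delta(1_y,g,g)\otimes\delta(f,g,a)\leq\delta(f',g,a)$; here $\delta(f,1_y,f')=d(f,f')$ by Proposition~\ref{Vgraph prop}, while $\sk\leq\delta(1_y,g,g)$ by the left identity law, so the middle factor may be discarded. For (ii) I would apply the right associativity law to $(f,g,1_z;a,g',a)$, obtaining $\delta(f,g,a)\otimes\delta(g,1_z,g')\otimes\delta(a,1_z,a)\leq\delta(f,g',a)$, where $\delta(g,1_z,g')=d(g,g')$ and $\sk\leq\delta(a,1_z,a)$ by the right identity law. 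For (iii) I would apply the left associativity law to $(1_x,f,g;f,a,a')$, obtaining $\delta(1_x,f,f)\otimes\delta(f,g,a)\otimes\delta(1_x,a,a')\leq\delta(f,g,a')$, where $\sk\leq\delta(1_x,f,f)$ by the left identity law and $\delta(1_x,a,a')=d(a,a')$. In each case, after substitution the hypothesis of the law becomes, up to commutativity and one factor lying above $\sk$, exactly $d(-,-)\otimes\delta(f,g,a)$, so the estimate follows from the neutrality of $\sk$.

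Granting (i)--(iii), the claim is immediate: since $\otimes$ is commutative, associative and order-preserving in each variable,
\begin{align*}
d(f,f')\otimes d(g,g')\otimes d(a,a')\otimes\delta(f,g,a)
&\leq d(f,f')\otimes d(g,g')\otimes\delta(f,g,a')\\
&\leq d(f,f')\otimes\delta(f,g',a')\\
&\leq \delta(f',g',a'),
\end{align*}
applying (iii), then (ii) with $a'$ in place of $a$, then (i) with $g',a'$ in place of $g,a$.

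The only genuine work lies in the construction of the previous paragraph: one must choose the three degenerate tetrahedra so that, after substitution into the chosen associativity law, exactly one of the three left-hand factors collapses to the desired $d(-,-)$ --- using whichever of the two equal presentations $\delta(1_x,h,h')=\delta(h,1_y,h')$ of $d(h,h')$ is convenient --- a second collapses to the given $\delta(f,g,a)$, and the third is pinned above $\sk$ by an identity law. Once the tetrahedra are written down, verifying that their four ``outer'' arrows match the $\delta$ occurring in the conclusion of the law is a routine type-check; notably, the symmetry of $d$ plays no role in this argument.
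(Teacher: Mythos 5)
Your proof is correct and follows essentially the same route as the paper: both reduce the claim to three single-replacement estimates obtained by applying an associativity law to a degenerate tetrahedron (with one face pinned above $\sk$ by an identity law and another equal to $d(-,-)$ via Proposition~\ref{Vgraph prop}), and then chain them. The only differences are cosmetic --- the paper replaces $f$, then $g$, then $a$, using the right associativity law on $(f',1_y,g;f,g,a)$ together with the symmetry $d(f,f')\leq d(f',f)$, whereas you replace $a$, then $g$, then $f$ with tetrahedra chosen so that symmetry is never invoked; all three of your tetrahedra type-check and the chaining is valid.
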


\begin{proof}
Applying the right associativity law to the tetrahedron $(f',1_y,g;f,g,a)$ one obtains
$$d(f,f')\otimes\delta(f,g,a)\leq\delta(f',1_y,f)\otimes\delta(1_y,g,g)\otimes\delta(f,g,a)\leq\delta(f',g,a).$$
Similarly one shows $d(g,g')\otimes\delta(f',g,a)\leq \delta(f',g',a)$ and $d(a,a')\otimes\delta(f',g',a)\leq\delta(f',g',a')$. The combination of all three inequalities gives the inequality claimed.
\end{proof}
In analogy to Remark \ref{V-metric remarks}(5), from Proposition \ref{delta cont} one concludes:
\begin{cor}
For all objects $x,y,z$ in a $\sV$-metagory $\mathbb X$, the area map $$\delta:{\mathbb X}(x,y)\otimes{\mathbb X}(y,z)\otimes{\mathbb X}(x,z)\to \sV$$ is $\sV$-contractive.
\end{cor}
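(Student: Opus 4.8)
The plan is to unwind both sides of the asserted inequality against the explicit formulas already on the table, and then to reduce the whole statement to Proposition~\ref{delta cont} by means of the adjunction $z\le u\multimap v\iff z\otimes u\le v$. Recall from Remark~\ref{V-metric remarks}(3) that the $\sV$-metric on the triple tensor product ${\mathbb X}(x,y)\otimes{\mathbb X}(y,z)\otimes{\mathbb X}(x,z)$ sends a pair of triples $(f,g,a),(f',g',a')$ to $d(f,f')\otimes d(g,g')\otimes d(a,a')$, while from Remark~\ref{V-metric remarks}(2) the codomain $\sV$ carries the $\sV$-metric $d_{\sV}(u,v)=(u\multimap v)\wedge(v\multimap u)$. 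So, writing $D:=d(f,f')\otimes d(g,g')\otimes d(a,a')$ for brevity, $\sV$-contractivity of $\delta$ is exactly the assertion
$$D\le\big(\delta(f,g,a)\multimap\delta(f',g',a')\big)\wedge\big(\delta(f',g',a')\multimap\delta(f,g,a)\big).$$

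First I would split this meet into its two components. By the defining property of $\multimap$, the inequality $D\le\delta(f,g,a)\multimap\delta(f',g',a')$ is equivalent to $D\otimes\delta(f,g,a)\le\delta(f',g',a')$, and since $\otimes$ is commutative and associative this is precisely the inequality of Proposition~\ref{delta cont}. For the other component, $D\le\delta(f',g',a')\multimap\delta(f,g,a)$ is equivalent to $D\otimes\delta(f',g',a')\le\delta(f,g,a)$; here I would invoke the symmetry of the hom-metrics $d$ (condition~2 of Definition~\ref{V-Met}, which in fact holds with equality), so that $D=d(f',f)\otimes d(g',g)\otimes d(a',a)$, whence the desired inequality is again Proposition~\ref{delta cont}, now applied with the primed triple playing the role of the unprimed one. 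Taking the infimum of the two established inequalities yields the claim.

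I do not expect a genuine obstacle: the corollary is essentially the ``curried'' reformulation of Proposition~\ref{delta cont}, in the same way that Remark~\ref{V-metric remarks}(5) is the curried form of the triangle inequality. The only place demanding a moment's attention is the use of symmetry of $d$ to obtain the second meet-component, which is legitimate precisely because condition~2 of Definition~\ref{V-Met} holds with equality on the hom-sets of a $\sV$-metagory.
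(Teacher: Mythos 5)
Your proof is correct and is exactly the argument the paper intends: the corollary is stated as an immediate consequence of Proposition \ref{delta cont} ``in analogy to Remark \ref{V-metric remarks}(5)'', and your unwinding via the tensor metric, the adjunction defining $\multimap$, and the symmetry of the hom-metric to obtain both meet-components is precisely that implicit argument, just written out in full.
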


One calls a $\sV$-metagory {\em separated} when its induced $\sV$-metric graph is separated, {\em i.e.}, if the $\sV$-metric of Proposition \ref{Vgraph prop} is separated. The functors of Corollary \ref{metag cor} may all be restricted to the respective full subcategories of separated objects, so that we have the following commutative diagram, in which the vertical arrows are inclusion functors::
$$\bfig
\square(700,0)/->`->`->`->/<900,400>[{\bf SepMet}_{\sV}\text{-}\Cat`{\bf SepMetag}_{\sV}`{\bf Met}_{\sV}\text{-}\Cat`
{\bf Metag}_{\sV};```]
\square(1600,0)/->`->`->`->/<900,400>[{\bf SepMetag}_{\sV}`{\bf SepMet}_{\sV}\text{-}{\bf Gph}`{\bf Metag}_{\sV}`{\bf Met}_{\sV}\text{-}{\bf Gph};```]
\efig$$
The following corollary shows that {\em the diagram stays commutative if the vertical arrows get replaced by their left adjoints}.

\begin{cor}
The separated reflection of a $\sV$-metagory is obtained by applying the separated reflection to each of its hom-sets and making the canonical projections isometries.
\end{cor}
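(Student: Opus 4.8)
The plan is to construct the claimed separated $\sV$-metagory $\mathbb{X}_s$ explicitly and then verify its universal property directly. I would set $\ob\mathbb{X}_s:=\ob\mathbb{X}$ and, for objects $x,y$, take $\mathbb{X}_s(x,y)$ to be the underlying set of the separated reflection of the $\sV$-metric space $(\mathbb{X}(x,y),d)$ of Proposition \ref{Vgraph prop}, i.e. the quotient $\mathbb{X}(x,y)/\!\sim$ by the relation $f\sim f'\iff\sk\le d(f,f')$; write $[f]$ for the class of $f$ and declare the identity of $\mathbb{X}_s$ at $x$ to be $[1_x]$. Finally I would define $\delta_{\mathbb{X}_s}([f],[g],[a]):=\delta_{\mathbb{X}}(f,g,a)$ on chosen representatives. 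The one substantive point --- and the only step I expect to require real work --- is that this $\delta_{\mathbb{X}_s}$ is well defined: if $f\sim f'$, $g\sim g'$ and $a\sim a'$, then $\sk\le d(f,f')$, $\sk\le d(g,g')$ and $\sk\le d(a,a')$, so, since $\sk$ is $\otimes$-neutral and $\otimes$ is monotone, Proposition \ref{delta cont} yields $\delta_{\mathbb{X}}(f,g,a)\le d(f,f')\otimes d(g,g')\otimes d(a,a')\otimes\delta_{\mathbb{X}}(f,g,a)\le\delta_{\mathbb{X}}(f',g',a')$, and the reverse inequality follows by interchanging the roles of the primed and unprimed morphisms; hence equality.

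Next I would check that $(\mathbb{X}_s,\delta_{\mathbb{X}_s})$ is indeed a $\sV$-metagory. Since each value of $\delta_{\mathbb{X}_s}$ is obtained by picking representatives and $\delta_{\mathbb{X}}$ satisfies the two identity laws and the two associativity laws, every one of these laws transfers verbatim to $\delta_{\mathbb{X}_s}$ (using the classes $[1_x]$ for the identity morphisms). The $\sV$-metric that Proposition \ref{Vgraph prop} then assigns to $\mathbb{X}_s(x,y)$ is $\bar d([f],[f'])=\delta_{\mathbb{X}_s}([1_x],[f],[f'])=\delta_{\mathbb{X}}(1_x,f,f')=d(f,f')$, which is exactly the $\sV$-metric of the separated reflection of $(\mathbb{X}(x,y),d)$; in particular $\sk\le\bar d([f],[f'])$ forces $f\sim f'$, that is $[f]=[f']$, so $\mathbb{X}_s$ is separated.

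To finish I would identify $\mathbb{X}_s$ as the separated reflection of $\mathbb{X}$. The identity map on objects together with the quotient maps $f\mapsto[f]$ on hom-sets clearly forms a graph morphism preserving identities, and by the very definition of $\delta_{\mathbb{X}_s}$ it is an isometric $\sV$-contractor $P:\mathbb{X}\to\mathbb{X}_s$. Given any $\sV$-contractor $F:\mathbb{X}\to\mathbb{Y}$ with $\mathbb{Y}$ separated, Proposition \ref{Vgraph prop} makes $F$ a $\sV$-contractive morphism of $\sV$-metric graphs, so $f\sim f'$ gives $\sk\le d_{\mathbb{X}}(f,f')\le d_{\mathbb{Y}}(Ff,Ff')$ and hence $Ff=Ff'$; thus $F$ factors uniquely through $P$ as a map $\bar F$ with $\bar F[x]=Fx$ and $\bar F[f]=Ff$. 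This $\bar F$ preserves identities and satisfies $\delta_{\mathbb{X}_s}([f],[g],[a])=\delta_{\mathbb{X}}(f,g,a)\le\delta_{\mathbb{Y}}(Ff,Fg,Fa)=\delta_{\mathbb{Y}}(\bar F[f],\bar F[g],\bar F[a])$, so it is a $\sV$-contractor, and it is unique since $P$ is surjective on objects and on all hom-sets. Letting $\mathbb{Y}$ range over ${\bf SepMetag}_{\sV}$ exhibits $P$ as the reflection unit, which is precisely the assertion. Apart from the well-definedness of $\delta_{\mathbb{X}_s}$ --- the only place where Proposition \ref{delta cont}, and with it the metagory axioms, is genuinely used --- the argument is pure bookkeeping.
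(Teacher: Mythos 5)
Your construction is exactly the paper's: same objects, hom-sets replaced by their separated reflections via the relation $\sk\le d(f,f')$, the area function defined on representatives, with Proposition \ref{delta cont} supplying well-definedness and the universal property then checked directly (the paper merely calls this last step obvious, which you spell out correctly). The proposal is correct and follows essentially the same route as the paper's proof.
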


\begin{proof}
The reflection ${\mathbb X}/\!\sim$ has the same objects as $\mathbb X$. Its hom-sets are the separated reflections of the hom-sets of the $\sV$-metagory $\mathbb X$, which are obtained with the equivalence relations described after Definition \ref{V-Met}, where $d$ denotes the $\sV$-metric induced by $\delta$ on each hom-set. Denoting by $[f]$ the equivalence class of $f\in{\mathbb X}(x,y)$, we derive from Proposition \ref{delta cont} that putting $\delta_{{\mathbb X}/\sim}([f],[g],[a]) = \delta(f,g,a)$ gives us a well-defined $\sV$-metagorical structure, which obviously has the required universal property.
\end{proof}

\section{Function spaces of $\sV$-metagories}
We want to show that ${\bf Metag}_{\sV}$ is, like ${\bf Met}_{\sV}\text{-}\Cat$, symmetric monoidal closed. It is easy to see that, for $\sV$-metagories ${\mathbb{X, Y}}$, we can define a tensor product ${\mathbb X}\otimes{\mathbb Y}$ analogously to the tensor product of $\sV$-metric categories, putting
${\rm{ob}}({\mathbb X}\otimes{\mathbb Y})={\rm ob}{\mathbb X}\times{\rm ob}{\mathbb Y}$ and
$${\mathbb X}\otimes{\mathbb Y}((x,y),(x',y'))={\mathbb X}(x,x')\times{\mathbb Y}(y,y'),\quad 1_{(x,y)}=(1_x,1_y),$$
$$\delta_{{\mathbb X}\otimes{\mathbb Y}}((f,h),(g,j),(a,b))=\delta_{\mathbb X}(f,g,a)\otimes\delta_{\mathbb Y}(h,j,b),$$
for all $f:x\to x', g:x'\to x'', a:x\to x''$ in $\mathbb X$ and $h:y\to y', j:y'\to y'', b:y\to y''$ in $\mathbb Y$. The $\otimes$-neutral $\sV$-metagory has exactly one object and one morphism, with $\sk$ as the only area value.

In order to be able to define a sutable hom-object $[{\mathbb X},{\mathbb Y}]$ we first need a notion of transformation of $\sV$-contractors.

\begin{defn}\label{nat trans def}
A {\em natural transformation} $\alpha:F\to G$ of $\sV$-contractors $F,\,G:{\mathbb X}\to{\mathbb Y}$ is given by a family $\alpha_f:Fx\to Gy$ of morphisms in $\mathbb Y$, where $f:x\to y$ runs through all morphisms of $\mathbb X$, such that, when we write $\alpha_x$ for $\alpha_{1_x}$,
$$\sk\leq \delta(Ff,\alpha_y,\alpha_f)\quad\text{and}\quad\sk\leq\delta(\alpha_x,Gf,\alpha_f).$$
One sees immediately that, when $\mathbb Y$ is induced by a separated $\sV$-metric category, then these two inequalities amount to the standard naturality condition $\alpha_y\cdot Ff = \alpha_f = Gf\cdot\alpha_x$.
$$\xymatrix{Fx
\ar[r]^{Ff}\ar[d]_{\alpha_x}\ar[rd]^{\alpha_f} & Fy
\ar[d]^{\alpha_y}\\Gx\ar[r]_{Gf} & Gy}$$
\end{defn}
For $\sV$-metagories $\mathbb{X, Y}$ we can now form the $\sV$-metagory{\footnote{As for ordinary categories, unless $\mathbb X$ is small, $[{\mathbb X},{\mathbb Y}]$ will generally live in a higher universe.}} $[{\mathbb X},{\mathbb Y}]$, putting 
$${\rm{ob}}[{\mathbb X},{\mathbb Y}]=\{F:{\mathbb X}\to{\mathbb Y}\,|\, F \;{\sV}\text{-contractor} \},\;
[{\mathbb X},{\mathbb Y}](F,G)=\{\alpha:F\to G\,|\,\alpha\text{ natural transformation}\,\},$$
$$1_F=(Ff)_{f:x\rightarrow y},\quad\delta_{[{\mathbb X},{\mathbb Y}]}(\alpha,\beta,\gamma)=\bw_{x\in{\rm{ob}}{\mathbb X}}\delta_{\mathbb Y}(\alpha_x,\beta_x,\gamma_x),$$
for all $\alpha:F\to G,\; \beta:G\to H, \;\gamma:F\to H$ in $[\mathbb X, \mathbb Y]$.

\begin{thm}\label{main thm}
$[\mathbb X,\mathbb Y]$ is a $\sV$-metagory, characterized by the property that for all $\sV$-metagories $\mathbb Z$, the $\sV$-contractors $F:{\mathbb Z}\otimes{\mathbb X}\to {\mathbb Y}$ correspond bijectively to the 
$\sV$-contractors $F^{\sharp}:{\mathbb Z}\to [\mathbb X,\mathbb Y]$, with the correspondence given by
$$(F^{\sharp}u)(x)=F(u,x),\;(F^{\sharp}u)(f)=F(1_u,f)\text{ and }(F^{\sharp}a)_f=F(a,f),$$
for all $f:x\to y$ in $\mathbb X$ and $a: u\to v$ in $\mathbb Z$.
\end{thm}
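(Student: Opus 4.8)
The plan is to verify in turn that $[\mathbb{X},\mathbb{Y}]$ is a $\sV$-metagory, that the assignments $F\mapsto F^{\sharp}$ and (back) $G\mapsto \widetilde{G}$ are well-defined mutually inverse bijections, and that they are natural in $\mathbb{Z}$. First I would check the metagory axioms for $[\mathbb{X},\mathbb{Y}]$. The two identity laws and two associativity laws are pointwise statements: for each $x\in\ob\mathbb{X}$ the components $\alpha_x,\beta_x,\gamma_x,\dots$ live in $\mathbb{Y}$, where the corresponding laws hold, so taking infima over $x$ preserves the inequalities (using that $\otimes$ preserves suprema, hence is monotone, and that $\sk\le\bigwedge_x u_x$ whenever $\sk\le u_x$ for all $x$). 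The only genuine content is that $1_F=(Ff)_{f:x\to y}$ is actually a natural transformation $F\to F$: this means $\sk\le\delta_\mathbb{Y}(Ff,1_F{}_y,1_F{}_f)=\delta_\mathbb{Y}(Ff,F1_y,Ff)$, which follows from the right identity law in $\mathbb{Y}$ together with $F1_y=1_{Fy}$, and symmetrically for the other inequality. One also needs that the composite-data $\delta_{[\mathbb{X},\mathbb{Y}]}$ is defined on triples $\alpha:F\to G$, $\beta:G\to H$, $\gamma:F\to H$ — which is exactly how we set it up.

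Next I would set up the bijection. Given a $\sV$-contractor $F:\mathbb{Z}\otimes\mathbb{X}\to\mathbb{Y}$, define $F^{\sharp}$ by the displayed formulas and check: (i) for each object $u$ of $\mathbb{Z}$, $F^{\sharp}u$ is a $\sV$-contractor $\mathbb{X}\to\mathbb{Y}$ — identity-preservation is $F(1_u,1_x)=F(1_{(u,x)})=1_{F(u,x)}$, and area-contraction follows from that of $F$ together with $\delta_{\mathbb{Z}}(1_u,1_u,1_u)\ge\sk$, so that $\delta_\mathbb{X}(f,g,a)=\sk\otimes\delta_\mathbb{X}(f,g,a)\le\delta_{\mathbb{Z}\otimes\mathbb{X}}((1_u,f),(1_u,g),(1_u,a))\le\delta_\mathbb{Y}(\dots)$; (ii) for each $a:u\to v$ in $\mathbb{Z}$, the family $(F(a,f))_{f:x\to y}$ is a natural transformation $F^{\sharp}u\to F^{\sharp}v$ — the two required inequalities $\sk\le\delta_\mathbb{Y}(F(1_u,f),F(a,1_y),F(a,f))$ and $\sk\le\delta_\mathbb{Y}(F(a,1_x),F(1_v,f),F(a,f))$ come from applying the identity laws of $\mathbb{X}$ inside the first coordinate and the identity laws of $\mathbb{Z}$ — more precisely from $\delta_{\mathbb{Z}\otimes\mathbb{X}}((1_u,f),(a,1_y),(a,f))=\delta_\mathbb{Z}(1_u,a,a)\otimes\delta_\mathbb{X}(f,1_y,f)\ge\sk$ and $F$-contractivity; (iii) $F^{\sharp}$ itself is a $\sV$-contractor $\mathbb{Z}\to[\mathbb{X},\mathbb{Y}]$ — it preserves identities since $F^{\sharp}1_u=(F(1_u,f))_f=1_{F^{\sharp}u}$, and it contracts areas because $\delta_{[\mathbb{X},\mathbb{Y}]}(F^{\sharp}a,F^{\sharp}b,F^{\sharp}c)=\bigwedge_x\delta_\mathbb{Y}(F(a,1_x),F(b,1_x),F(c,1_x))\ge\bigwedge_x\delta_\mathbb{Z}(a,b,c)\otimes\delta_\mathbb{X}(1_x,1_x,1_x)\ge\delta_\mathbb{Z}(a,b,c)$. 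Conversely, from a $\sV$-contractor $H:\mathbb{Z}\to[\mathbb{X},\mathbb{Y}]$ define $\widetilde{H}:\mathbb{Z}\otimes\mathbb{X}\to\mathbb{Y}$ on objects by $\widetilde{H}(u,x)=(Hu)(x)$ and on morphisms by $\widetilde{H}(a,f)=(Ha)_f$ for $a:u\to v$, $f:x\to y$; checking that $\widetilde{H}$ is a $\sV$-contractor uses the area-contraction of $H$ (giving the $\mathbb{Z}$-factor of the tensor-area) together with the area-contraction of each $Hu$ and the naturality inequalities of the transformations $Ha$ (giving the $\mathbb{X}$-factor), combined via the associativity laws in $\mathbb{Y}$ much as in Proposition~\ref{delta cont}. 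Then one verifies $(\widetilde{H})^{\sharp}=H$ and $\widetilde{(F^{\sharp})}=F$ by unwinding definitions on objects and morphisms, which is immediate from the formulas; and naturality in $\mathbb{Z}$ (compatibility with precomposition by contractors $\mathbb{Z}'\to\mathbb{Z}$) is equally routine.

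I expect the main obstacle to be step (iii)/the converse direction: showing that $\widetilde{H}$ is area-contractive. The subtlety is that a single inequality $\delta_{\mathbb{Z}\otimes\mathbb{X}}((a,f),(b,g),(c,h))=\delta_\mathbb{Z}(a,b,c)\otimes\delta_\mathbb{X}(f,g,h)\le\delta_\mathbb{Y}((Ha)_f,(Hb)_g,(Hc)_h)$ is not the direct image of one axiom but must be assembled: one uses $H$'s area-contraction to absorb the $\delta_\mathbb{Z}(a,b,c)$ factor into $\bigwedge_x\delta_\mathbb{Y}((Ha)_x,(Hb)_x,(Hc)_x)$, then transports from the ``constant'' components $(Ha)_x=(Ha)_{1_x}$ to the general components $(Ha)_f$ using the naturality inequalities $\sk\le\delta_\mathbb{Y}((Hu)f,(Ha)_y,(Ha)_f)$ etc., feeding everything through the left and right associativity laws of $\mathbb{Y}$ in the right order — exactly the bookkeeping pattern of the proofs of Proposition~\ref{Vgraph prop} and Proposition~\ref{delta cont}. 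Once the correct tetrahedra in $\mathbb{Y}$ are identified (built from the six morphisms $(Hu)f$, $(Ha)_y$, $(Ha)_f$, $(Hb)_g$, etc.), the inequality chases are mechanical; the care needed is purely in choosing those tetrahedra so that the $\otimes$-factors line up.
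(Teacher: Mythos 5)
Your outline coincides with the paper's proof: the metagory axioms for $[\mathbb X,\mathbb Y]$ are checked pointwise, and your verifications that $F^{\sharp}u$ is a $\sV$-contractor, that $F^{\sharp}a$ is a natural transformation, and that $F^{\sharp}$ itself preserves identities and contracts areas are correct and essentially the computations in the paper. The proposal is incomplete, however, at exactly the point you yourself flag as the ``main obstacle'': the area-contractivity of $\widetilde H$ (equivalently, that $F$ is a $\sV$-contractor whenever $F^{\sharp}$ is). You assert that once the correct tetrahedra in $\mathbb Y$ are identified the inequality chases are mechanical --- but identifying those tetrahedra \emph{is} the only nontrivial content of this direction, and your text never exhibits them. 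The appeal to the pattern of Proposition \ref{delta cont} is too weak a guide: there one transports along hom-distances using tetrahedra built from identity morphisms, whereas here one must decompose $\delta_{\mathbb Y}\bigl((Ha)_f,(Hb)_g,(Hc)_h\bigr)$ so that \emph{both} tensor factors $\delta_{\mathbb Z}(a,b,c)$ and $\delta_{\mathbb X}(f,g,h)$ emerge, each padded by naturality constants, and it is not obvious a priori that the factors can be made to line up. As written, the central inequality $\delta_{\mathbb Z}(a,b,c)\otimes\delta_{\mathbb X}(f,g,h)\leq\delta_{\mathbb Y}\bigl((Ha)_f,(Hb)_g,(Hc)_h\bigr)$ is unproved, so the bijection is not established.

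For the record, the chain that closes the gap (and is the paper's) is the following; write $Hu$ for $F^{\sharp}u$, etc., and take $f:x\to y$, $g:y\to z$, $h:x\to z$ in $\mathbb X$ and $a:u\to v$, $b:v\to w$, $c:u\to w$ in $\mathbb Z$. Left associativity applied to the tetrahedron $\bigl((Hu)(f),(Ha)_y,(Hb)_g;(Ha)_f,(Hc)_g,(Hc)_h\bigr)$, together with the naturality inequality $\sk\leq\delta\bigl((Hu)(f),(Ha)_y,(Ha)_f\bigr)$, reduces the claim to lower bounds for $\delta\bigl((Ha)_y,(Hb)_g,(Hc)_g\bigr)$ and $\delta\bigl((Hu)(f),(Hc)_g,(Hc)_h\bigr)$. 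The first bound, $\geq\delta_{\mathbb Z}(a,b,c)$, follows from right associativity applied to $\bigl((Ha)_y,(Hb)_y,(Hw)(g);(Hc)_y,(Hb)_g,(Hc)_g\bigr)$, using the contractivity of $H$ for the factor $\delta\bigl((Ha)_y,(Hb)_y,(Hc)_y\bigr)$ and naturality of $Hb$ and $Hc$ for the other two factors. The second bound, $\geq\delta_{\mathbb X}(f,g,h)$, follows from right associativity applied to $\bigl((Hu)(f),(Hu)(g),(Hc)_z;(Hu)(h),(Hc)_g,(Hc)_h\bigr)$, using the contractivity of $Hu$ and naturality of $Hc$. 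With these three tetrahedra supplied, your argument is complete and agrees with the paper's; without them, the decisive step of the theorem is missing.
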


\begin{proof}
Trivially $\sk\leq\bw_{x\in{\rm{ob}}_{\mathbb X}}\delta(1_{Fx},\alpha_x,\alpha_x)=\delta(1_F,\alpha,\alpha)$ and $\sk\leq\delta(\alpha,1_G,\alpha)$, for all $\alpha:F\to G$. For every tetrahedron $(\lambda,\mu,\nu,\alpha,\beta,\gamma)$ in $[\mathbb X,\mathbb Y]$ and all $x\in{\rm{ob}}{\mathbb X}$ one has
$$\delta(\lambda,\mu,\alpha)\otimes\delta(\mu,\nu,\beta)\otimes\delta(\alpha,\nu,\gamma)\leq \delta(\lambda_x,\mu_x,\alpha_x)\otimes\delta(\mu_x,\nu_x,\beta_x)\otimes\delta(\alpha_x,\nu_x,\gamma_x)\leq\delta(\lambda_x,\beta_x,\gamma_x),$$
which implies the right associativity law; likewise for the left law.

Consider now a $\sV$-contractor $F:{\mathbb Z}\otimes{\mathbb X}\to {\mathbb Y}$. We must first make sure that $F^{\sharp}$ is well defined; that is: for $a:u\to v$ in $\mathbb Z$, $F^{\sharp}u: {\mathbb X}\to {\mathbb Y}$ is a $\sV$-contractor and $F^{\sharp}a:F^{\sharp}u\to F^{\sharp}v$ is a natural transformation. But these verifications are easy; for instance,
\begin{align*}
\delta((F^{\sharp}u)(f),(F^{\sharp}a)_y,(F^{\sharp}a)_f)&=\delta(F(1_u,f),F(a,1_y),F(a,f))\\
&\geq\delta((1_u,f),(a,1_y),(a,f))\\
&=\delta(1_u,a,a)\otimes\delta(f,1_x,f)\\
&\geq\sk\otimes\sk=\sk
\end{align*}
for all $f:x\to y$ in $\mathbb X$ gives half of the argument needed for the naturality of $F^{\sharp}u$. 

Next we show that $F^{\sharp}$ is a $\sV$-contractor. Since $(F^{\sharp}1_u)_f=F(1_u,f)=(F^{\sharp}u)(f)=(1_{F^{\sharp}u})_f$ for all objects 
$u$ in ${\mathbb Z}$ and morphisms $f$ in $\mathbb X$, $F^{\sharp}$ preserves identity morphisms. Furthermore, for all $a:u\to v,\, b:v\to w, \,c:u\to w$ in $\mathbb Z$ one has
\begin{align*}
\delta(F^{\sharp}a,F^{\sharp}b,F^{\sharp}c)&=\bw_{x\in{\rm{ob}}{\mathbb X}}\delta(F(a,1_x),F(b,1_x),F(c,1_x))\\
&\geq \bw_{x\in{\rm{ob}}{\mathbb X}}\delta((a,1_x),(b,1_x),(c,1_x))\\
&=\bw_{x\in{\rm{ob}}{\mathbb X}}\delta(a,b,c)\otimes \sk\;\geq\delta(a,b,c).
\end{align*}

Conversely, given the $\sV$-contractor $F^{\sharp}$, we must show the $\sV$-contractivity of $F$. For morphisms $a,b,c$ in $\mathbb Z$ as above and $f:x\to y,\,g:y\to z,\,h:x\to z$ in $\mathbb X$, the left associativity of the tetrahedron $((F^{\sharp}u)(f),(F^{\sharp}a)_y,(F^{\sharp}b)_g;(F^{\sharp}a)_f,(F^{\sharp}c)_g,(F^{\sharp}c)_h)$ and the naturality of $F^{\sharp}a$ show
\begin{align*}
&\;\quad\delta(F(a,f),F(b,g),F(c,h))=\delta((F^{\sharp}a)_f,(F^{\sharp}b)_g,(F^{\sharp}c)_h)\\
&\geq\delta((F^{\sharp}u)(f),(F^{\sharp}a)_y,(F^{\sharp}a)_f)\otimes\delta((F^{\sharp}a)_y,(F^{\sharp}b)_g,(F^{\sharp}c)_g)\otimes\delta((F^{\sharp}u)(f),(F^{\sharp}c)_g,(F^{\sharp}c)_h)\\
&\geq\sk\otimes \delta((F^{\sharp}a)_y,(F^{\sharp}b)_g,(F^{\sharp}c)_g)\otimes\delta((F^{\sharp}u)(f),(F^{\sharp}c)_g,(F^{\sharp}c)_h).
\end{align*}
Now, the right associativity of the tetrahedra $((F^{\sharp}a)_y, (F^{\sharp}b)_y, (F^{\sharp}w)(g); (F^{\sharp}c)_y,(F^{\sharp}b)_g,(F^{\sharp}c)_g)$ and 
$((F^{\sharp}u)(f),(F^{\sharp}u)(g),(F^{\sharp}c)_z; (F^{\sharp}u)(h),(F^{\sharp}c)_g,(F^{\sharp}c)_h)$, together with multiple naturality applications, shows first
\begin{align*}
&\;\quad\delta((F^{\sharp}a)_y,(F^{\sharp}b)_g,(F^{\sharp}c)_g)\\
&\geq\delta((F^{\sharp}a)_y,(F^{\sharp}b)_y,(F^{\sharp}c)_y)\otimes\delta((F^{\sharp}b)_y,(F^{\sharp}w)(g),(F^{\sharp}b)_g)\otimes\delta((F^{\sharp}c)_y,(F^{\sharp}w)(g),(F^{\sharp}c)_g)\\
&\geq\delta(a,b,c)\otimes\sk\otimes\sk
\end{align*}
and then, since $F^{\sharp}u$ is $\sV$-contractive,
\begin{align*}
&\;\quad\delta((F^{\sharp}u)(f),(F^{\sharp}c)_g,(F^{\sharp}c)_h)\\
&\geq\delta((F^{\sharp}u)(f), (F^{\sharp}u)(g),(F^{\sharp}u)(h))\otimes\delta((F^{\sharp}u)(g),(F^{\sharp}c)_z,(F^{\sharp}c)_g)\otimes\delta((F^{\sharp}u)(h),(F^{\sharp} c)_z,(F^{\sharp}c)_h)\\
&\geq\delta(f,g,h)\otimes\sk\otimes\sk.
\end{align*}
Consequently,
$\delta(F(a,f),F(b,g),F(c,h))\geq\delta(a,b,c)\otimes\delta(f,g,h)=\delta((a,f),(b,g),(c,h)),$
as desired.
\end{proof}

\begin{cor} 
The category ${\bf Metag}_{\sV}$ is symmetric monoidal closed. The induced $\sV$-area functor\\ ${\bf Met}_{\sV}\text{-}\Cat\to{\bf Metag}_{\sV}$ preserves tensor products and the $\otimes$-neutral object, but it generally fails to preserve the internal  hom-objects.
\end{cor}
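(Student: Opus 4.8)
The plan is to read off symmetric monoidal closedness directly from Theorem~\ref{main thm}, to verify strict preservation of $\otimes$ and of the $\otimes$-neutral object by a one-line computation, and then to exhibit a small concrete example witnessing the failure for internal homs; this last point is the only genuinely non-formal part. For the first assertion, the tensor product of $\sV$-metagories introduced before Definition~\ref{nat trans def} is evidently a bifunctor, and its associativity, unit and symmetry constraints can be built coordinate-wise: on object sets and hom-sets they are the cartesian constraints of $\Set$, while on the area functions $\delta$ they are inherited from the associativity, unit and commutativity laws of the monoid $(\sV,\otimes,\sk)$. Hence the pentagon, triangle and hexagon identities for ${\bf Metag}_{\sV}$ follow from the corresponding identities in $\Set$ and in $\sV$, so $({\bf Metag}_{\sV},\otimes)$ is symmetric monoidal. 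Theorem~\ref{main thm} furnishes, for each $\sV$-metagory $\mathbb X$, the object $[\mathbb X,\mathbb Y]$ together with a bijection ${\bf Metag}_{\sV}(\mathbb Z\otimes\mathbb X,\mathbb Y)\cong{\bf Metag}_{\sV}(\mathbb Z,[\mathbb X,\mathbb Y])$; the explicit formulas for the correspondence show it is natural in $\mathbb Z$ and $\mathbb Y$, so $-\otimes\mathbb X\dashv[\mathbb X,-]$ and ${\bf Metag}_{\sV}$ is symmetric monoidal closed.

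Write $\Phi\colon{\bf Met}_{\sV}\text{-}\Cat\to{\bf Metag}_{\sV}$ for the induced $\sV$-area functor of Proposition~\ref{metag prop}. For $\sV$-metric categories $\mathbb X,\mathbb Y$ the metagories $\Phi(\mathbb X\otimes\mathbb Y)$ and $\Phi\mathbb X\otimes\Phi\mathbb Y$ share the same underlying graph, and for a directed triangle one has
$$\delta_{\Phi(\mathbb X\otimes\mathbb Y)}((f,h),(g,j),(a,b))=d_{\mathbb X\otimes\mathbb Y}\big((g,j)\cdot(f,h),(a,b)\big)=d_{\mathbb X}(g\cdot f,a)\otimes d_{\mathbb Y}(j\cdot h,b)=\delta_{\Phi\mathbb X\otimes\Phi\mathbb Y}((f,h),(g,j),(a,b)),$$
so $\Phi(\mathbb X\otimes\mathbb Y)=\Phi\mathbb X\otimes\Phi\mathbb Y$ on the nose; likewise $\Phi$ sends the $\otimes$-neutral $\sV$-metric category (one object, one morphism of self-distance $\sk$) to the $\otimes$-neutral $\sV$-metagory, since $\delta(1,1,1)=d(1\cdot1,1)=\sk$. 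In particular $\Phi$ is strong symmetric monoidal.

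Being strong monoidal, $\Phi$ carries a canonical comparison contractor $\Phi[\mathbb X,\mathbb Y]\to[\Phi\mathbb X,\Phi\mathbb Y]$, namely the mate of $\Phi(\mathrm{ev})\colon\Phi[\mathbb X,\mathbb Y]\otimes\Phi\mathbb X=\Phi([\mathbb X,\mathbb Y]\otimes\mathbb X)\to\Phi\mathbb Y$; concretely it sends a $\sV$-contractive functor $H$ to $\Phi H$ and a natural transformation $\alpha\colon H\to K$ to the transformer with components $\alpha_y\cdot Hf$ at $f\colon x\to y$, and a short check shows it is an isometric contractor, injective on objects and on each hom-set. It therefore suffices to find $\mathbb X,\mathbb Y$ for which $[\Phi\mathbb X,\Phi\mathbb Y]$ has strictly more objects than $[\mathbb X,\mathbb Y]$, for then — an isomorphism of $\sV$-metagories being in particular a bijection on objects — one gets $\Phi[\mathbb X,\mathbb Y]\not\cong[\Phi\mathbb X,\Phi\mathbb Y]$.

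To this end take $\sV={\sf 2}$, let $\mathbb X$ be the chain $\{0<1<2\}$ viewed as a $\sf 2$-metric category with singleton hom-sets (non-identity arrows $e_{01},e_{12},e_{02}$, with $e_{12}\cdot e_{01}=e_{02}$), and let $\mathbb Y$ have objects $p,q$ with $\mathbb Y(p,p),\mathbb Y(q,q)$ singletons, $\mathbb Y(q,p)=\emptyset$, and $\mathbb Y(p,q)=\{u,v\}$ with $u,v$ identified by the $\sf 2$-metric (composition is then forced and $\mathbb Y$ is non-separated). Since $\mathbb X$ has trivial hom-metrics, the $\sf 2$-contractive functors $\mathbb X\to\mathbb Y$ are precisely the functors, and a direct count gives six of them, so $\Phi[\mathbb X,\mathbb Y]$ has six objects. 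A $\sf 2$-contractor $\Phi\mathbb X\to\Phi\mathbb Y$ amounts to a choice of objects $F0,F1,F2$ and morphisms $Fe_{01}\colon F0\to F1$, $Fe_{12}\colon F1\to F2$, $Fe_{02}\colon F0\to F2$ subject to the single non-degenerate area constraint $d(Fe_{12}\cdot Fe_{01},Fe_{02})=\mathrm{true}$ — all other triangles of $\mathbb X$ involve an identity and impose nothing, by the identity laws in $\Phi\mathbb Y$ — and since $u$ and $v$ have distance $\mathrm{true}$ this constraint holds automatically whenever the relevant hom-sets are nonempty; one counts ten such contractors, so $[\Phi\mathbb X,\Phi\mathbb Y]$ has ten objects. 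As $6\neq10$, the comparison is not invertible, which is exactly the last clause of the corollary. The only real work in the whole statement is isolating such a transparent counterexample; everything else is bookkeeping built on Theorem~\ref{main thm}.
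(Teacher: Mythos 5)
Your argument is correct, and for the two positive clauses it is exactly the routine bookkeeping the paper intends: symmetric monoidal closedness is read off from Theorem \ref{main thm} together with the coordinate-wise coherence constraints (inherited from $\Set$ and from the monoid $(\sV,\otimes,\sk)$), and strict preservation of $\otimes$ and of the unit by the induced area functor is the one-line computation $d_{\mathbb X\otimes\mathbb Y}((g,j)\cdot(f,h),(a,b))=d_{\mathbb X}(g\cdot f,a)\otimes d_{\mathbb Y}(j\cdot h,b)$. Where you genuinely go beyond the paper is the last clause: the paper merely asserts that internal homs are ``generally'' not preserved and offers no witness, whereas you produce an explicit $\sf 2$-valued counterexample, and your counts check out --- with $\mathbb X$ the three-element chain and $\mathbb Y$ the non-separated two-object category with $\mathbb Y(p,q)=\{u,v\}$, $d(u,v)=\mathrm{true}$, there are $6$ contractive functors $\mathbb X\to\mathbb Y$ but $10$ contractors $\Phi\mathbb X\to\Phi\mathbb Y$ (the sole non-degenerate area constraint $d(Fe_{12}\cdot Fe_{01},Fe_{02})=\mathrm{true}$ being automatic), so no isomorphism $\Phi[\mathbb X,\mathbb Y]\cong[\Phi\mathbb X,\Phi\mathbb Y]$ can exist; the auxiliary comparison contractor is not even needed for this conclusion. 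It is worth noting that your choice of a non-separated $\mathbb Y$ is essential and consistent with the paper's Remark \ref{sep remark}: when $\mathbb Y$ is a separated $\sV$-metric category, contractors out of $\Phi\mathbb X$ are forced to be functors and transformers reduce to natural transformations, so the hom-object is in fact preserved in that case; your example thus locates the failure precisely where the paper's theory predicts it can occur.
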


\begin{rem}\label{sep remark}
{\em For a separated $\sV$-metric category $\mathbb Y$ considered as a $\sV$-metagory, and for any $\sV$-metagory $\mathbb X$, also the $\sV$-metagory $[\mathbb X, \mathbb Y]$ is induced by a separated $\sV$-metric category.} Indeed, as already remarked in Definition \ref{nat trans def}, under the hypothesis on $\mathbb Y$ every natural transformation $\alpha: F\to G:{\mathbb X}\to{\mathbb Y}$ satisfies the naturality condition $\alpha_y\cdot Ff = \alpha_f = Gf\cdot\alpha_x$ for all $f:x\to y$ in $\mathbb X$ and is therefore completely determined by the morphisms $\alpha_x: Fx\to Gx \;(x\in{\rm{ob}}{\mathbb X})$. Furthermore, one may then define a composition for natural transformations of contractors as one does for natural transformations of functors, and finally define a $\sV$-metric as one does in 
${\bf Met}_{\sV}\text{-}\Cat$ (see Remark \ref{VMetCat}). This makes $[\mathbb X,\mathbb Y]$ a $\sV$-metric category, and one easily verifies that its $\sV$-metric induces the $\sV$-metagorical structure described by Theorem \ref{main thm}.
\end{rem}

\section{Topologicity and free structures}
In Corollary \ref{metag cor} we considered the upper horizontal row of functors of the following commutative and self-explanatory diagram of categories and forgetful functors,

$$\bfig
\square(800,0)/->`->`->`->/<700,400>[{\bf Met}_{\sV}\text{-}\Cat`{\bf Metag}_{\sV}`\Cat`{\bf Gph};```]
\square(1500,0)/->`->`->`->/<700,400>[{\bf Metag}_{\sV}`{\bf Met}_{\sV}\text{-}{\bf Gph}`{\bf Gph}`{\bf Gph};```{\rm{Id}}]
\efig$$
for which we would like to construct left-adjoints. To this end, but also in order to better understand the structure of the categories at issue, it is useful to observe first that all three vertical functors are {\em topological} \cite{Adamek1990, Borceux1994, MonTop}, with initial structures being constructed as for ${\bf Met}_{\sV}\to {\bf Set}$ (see \ref{V-metric remarks} (1)). Indeed, recall that for a set $X$ and any (possibly large) family of maps $f_i:X\to Y_i$ to $\sV$-metric spaces $(Y_i,d_i)\;(i\in I)$, the initial structure on $X$ is given by
$$d(x,x')=\bw_{i\in I}d_i(f_ix,f_ix').$$
Now, replacing the set $X$ by a small\footnote{Being small is not essential for the existence of initial structures.}  category $\mathbb X$, the $\sV$-metric spaces $Y_i$ by $\sV$-metric categories $(\mathbb Y_i,d_i)$, and the maps $f_i$ by functors $F_i:{\mathbb X}\to{\mathbb Y}_i$, the  formula 
$$d(f,f')=\bw_{i\in I}d_i(F_if,F_if')$$
describes the initial structure on $\mathbb X$ with respect to the left vertical functor of the diagram above; likewise for the right vertical functor, and analogously for the middle vertical functor:
$$\delta(f,g,a)=\bw_{i\in I}\delta_i(F_if,F_if',F_ia)$$
describes the initial $\sV$-metagorical structure on the graph $\mathbb X$ for a family of graph morphisms $F_i$ to $\sV$-metagories $(\mathbb Y_i,\delta_i)\;(i\in I)$.

Now, let us recall Wyler's {\em Taut Lift Theorem} (see \cite{Wyler1971}, \cite{Tholen1978}, or Section II.5.11 of \cite{MonTop}) and consider any commutative diagram
$$\bfig
\square(800,0)<600,300>[{\mathcal A}`{\mathcal B}`{\mathcal X}`{\mathcal Y};G`U `V`J]
 \efig$$
 of categories and functors. Wyler proved the ``if"-part of the following statement; for ``only if" and a generalization, see \cite{Tholen1978}.
 \begin{prop}\label{Wyler}
If $U$ is topological and $J$ has a left adjoint $H$, then $G$ has a left adjoint $F$ with $UF=HV$ if, and only if, $G$ preserves initiality, in the sense that it maps $U$-initial families to $V$-initial families. In this case, $V$ maps $G$-universal arrows to $J$-universal arrows.
\end{prop}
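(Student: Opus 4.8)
Since the statement asserts an equivalence together with a final clause, I would organize the proof around the ``if'' direction, which is the substantive part due to Wyler, and treat the ``only if'' direction and the last clause afterwards. The guiding idea is that topologicity of $U$ lets us \emph{choose} an object $FB$ over the prescribed object $HVB$ of $\CX$, while the hypothesis that $G$ preserve initiality is precisely what turns this object into the value of a left adjoint and manufactures the universal arrow.

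\textbf{The ``if'' direction.} Assume $G$ preserves initiality and fix $B\in\CB$; write $\iota$ for the unit of $H\dashv J$. For every $\CA$-object $A$ and every $\CB$-morphism $f\colon B\to GA$, let $\bar f\colon HVB\to UA$ be the $\CX$-morphism adjunct to $Vf\colon VB\to VGA=JUA$ under $H\dashv J$, so that $J\bar f\circ\iota_{VB}=Vf$. Since $U$ is topological, the $U$-structured source $(\bar f\colon HVB\to UA)_{A,\,f}$ admits a $U$-initial lift; let $FB$ be its domain, so that $U(FB)=HVB$ and every $\bar f$ now underlies an $\CA$-morphism $FB\to A$. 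Applying the hypothesis to this $U$-initial source, the source $(G\bar f\colon GFB\to GA)_{A,\,f}$ is $V$-initial in $\CB$. Now $\iota_{VB}\colon VB\to JHVB=VGFB$ satisfies $V(G\bar f)\circ\iota_{VB}=J\bar f\circ\iota_{VB}=Vf$, which lifts to $f$; hence $V$-initiality delivers a unique $\CB$-morphism $\eta_B\colon B\to GFB$ with $V\eta_B=\iota_{VB}$ and $G\bar f\circ\eta_B=f$ for all $(A,f)$.

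I would then check that $\eta_B$ is a $G$-universal arrow: for $f\colon B\to GA$ the $\CA$-morphism $\bar f\colon FB\to A$ satisfies $G\bar f\circ\eta_B=f$, and if $h,h'\colon FB\to A$ satisfy $Gh\circ\eta_B=Gh'\circ\eta_B$, then applying $V$ and using $V\eta_B=\iota_{VB}$ together with the bijection $\CX(HVB,UA)\cong\CY(VB,JUA)$ forces $Uh=Uh'$, whence $h=h'$ because topological functors are faithful. Thus $G$ has a left adjoint $F$, and naturality of $\iota$ promotes the object-level equality $U(FB)=HVB$ to the functor equality $UF=HV$. For the converse one reverses this chase: assuming $F\dashv G$ with $UF=HV$, one verifies that $V\eta_B$ is a $J$-universal arrow from $VB$, and then, given a $U$-initial source $(p_i\colon A\to A_i)$ and a $\CY$-morphism $t\colon VB\to JUA$ whose composites with the $J(Up_i)$ all lift to $\CB$, one transports $t$ through $V\eta_B$ to an $\CX$-morphism $HVB=U(FB)\to UA$, invokes $U$-initiality of $(p_i)$ (via the $F\dashv G$-adjuncts of the given lifts) to see that it underlies an $\CA$-morphism $h\colon FB\to A$, and recovers $Gh\circ\eta_B$ as the required unique $\CB$-lift of $t$; this is carried out, in greater generality, in \cite{Tholen1978}. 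Finally, once either condition holds the left adjoint $F$ above exists and has $V\eta_B=\iota_{VB}$, so $V\eta_B$ is $J$-universal; since any left adjoint of $G$ is naturally isomorphic to $F$, and any $G$-universal arrow out of $B$ differs from $\eta_B$ by an isomorphism that $V$ carries to an isomorphism in $VB\downarrow J$, the $V$-image of an arbitrary $G$-universal arrow is $J$-universal.

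The step I expect to need the most care is the legitimacy of the source $(\bar f\colon HVB\to UA)_{A,\,f}$, which is indexed a priori by a proper class of pairs $(A,f)$: this is exactly the service rendered by topologicity of $U$, since for a topological functor even (large) structured sources possess initial lifts. The remaining work --- keeping track of the compatibility $V\eta_B=\iota_{VB}$ throughout, and of the faithfulness of $U$ used for the uniqueness of fill-ins --- is routine bookkeeping.
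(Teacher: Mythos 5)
The paper itself contains no proof of this proposition: it is quoted as Wyler's Taut Lift Theorem, with the ``if'' part attributed to \cite{Wyler1971} and the ``only if'' part (and a generalization) to \cite{Tholen1978}. Measured against those sources, your proof of the ``if'' direction is correct and is exactly the classical construction: take the $U$-initial lift $FB$ of the (possibly large) structured source of all $H\dashv J$-adjuncts $\bar f\colon HVB\to UA$, use the hypothesis to transfer initiality to the $G$-image, and read off the universal arrow $\eta_B$ with $V\eta_B=\iota_{VB}$; your promotion of the object equality $U(FB)=HVB$ to $UF=HV$ via naturality of $\iota$, the use of faithfulness of the topological $U$ for uniqueness, the remark about large sources, and the argument for the final clause (any $G$-universal arrow differs from $\eta_B$ by an isomorphism, which $V$ sends to an isomorphism over $J$) are all sound. (Minor point: distinguish notationally between $\bar f$ in $\mathcal X$ and its lift in $\mathcal A$.)

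The one step to flag is in your sketch of the converse: ``one verifies that $V\eta_B$ is a $J$-universal arrow'' is not a routine verification from $UF=HV$ alone. Writing $V\eta_B=J\varphi_B\circ\iota_{VB}$ for the unique $\varphi_B\colon HVB\to HVB$ supplied by universality of $\iota_{VB}$, the triangle identities of the two adjunctions do not by themselves force $\varphi_B$ to be invertible; indeed, $J$-universality of $V\eta_B$ is equivalent to $V$ being bijective on hom-sets into suitable $G$-images of $U$-initial lifts, i.e.\ it is an instance of the very initiality-preservation being proved, so this step carries the whole weight of the ``only if'' direction rather than being part of the bookkeeping. Since you, like the paper, ultimately defer the converse to \cite{Tholen1978}, this does not invalidate the proposal, but it should be presented as the substantive point of that direction (or argued in detail following \cite{Tholen1978}), not as an easy reversal of the chase; once $J$-universality of $V\eta_B$ is in hand, your transport of $t$ and the appeal to $U$-initiality do complete the argument correctly.
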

 
The given formulae for initial structures show that the the two upper horizontal functors in our first diagram preserve initiality, making Proposition \ref{Wyler} applicable to them. Hence, we obtain:

\begin{prop}\label{topological}
The categories ${\bf Met}_{\sV}\text{-}{\bf Cat}$, ${\bf Metag}_{\sV}$ and ${\bf Met}_{\sV}\text{-}{\bf Gph}$ are topological over $\Cat, {\bf Gph}$ and ${\bf Gph}$, respectively, in particular complete and cocomplete. The left adjoints to $\Cat\to{\bf Gph}$ and ${\rm{Id}}_{\bf Gph}$ can be lifted along the topological functors to left adjoints of the functors
$${\bf Met}_{\sV}\text{-}\Cat\to{\bf Metag}_{\sV}\to {\bf Met}_{\sV}\text{-}{\bf Gph}.$$
\end{prop}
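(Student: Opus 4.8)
The plan is to read off topologicity from the initial-structure formulae displayed just above the statement, and then to invoke Wyler's Taut Lift Theorem (Proposition~\ref{Wyler}) twice to produce the two left adjoints. Recall that a faithful functor is topological exactly when every structured source possesses a (necessarily unique) initial lift. All three vertical functors are plainly faithful, their morphisms being maps, functors, or graph morphisms constrained by further inequalities only. Given a source $F_i\colon\mathbb X\to\mathbb Y_i\;(i\in I)$ in the relevant base category, with each $\mathbb Y_i$ carrying the appropriate enriched structure, I would equip $\mathbb X$ with the stated infimum, e.g.\ $\delta(f,g,a)=\bw_{i\in I}\delta_i(F_if,F_ig,F_ia)$ in the metagory case. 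Two checks then remain: that this is indeed an object of the category, and that it is $U$-initial.

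For the first check, the identity laws hold because each $F_i$ preserves identities and $\sk\le\delta_i(1_{F_ix},F_if,F_if)$, whence $\sk\le\bw_i\delta_i(1_{F_ix},F_if,F_if)$. The associativity laws follow from the corresponding laws in each $\mathbb Y_i$ together with the elementary monotonicity inequality $(\bw_iu_i)\otimes(\bw_iv_i)\otimes(\bw_iw_i)\le\bw_i(u_i\otimes v_i\otimes w_i)$; the $\sV$-metric-category and $\sV$-metric-graph cases are handled the same way (indeed hom-set by hom-set they reduce to the already-recorded topologicity of ${\bf Met}_{\sV}\to\Set$). For initiality, a morphism of underlying structures $G\colon\mathbb Z\to\mathbb X$ out of an arbitrary object $\mathbb Z$ of the category is a contractor into $(\mathbb X,\delta)$ iff $\delta_{\mathbb Z}(p,q,c)\le\bw_i\delta_i(F_iGp,F_iGq,F_iGc)$, which by the defining property of infima holds iff every composite $F_i\circ G$ is a contractor — precisely the definition of initiality; uniqueness of the lift is forced by faithfulness. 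Hence the three vertical functors are topological. Since $\Cat$ and ${\bf Gph}$ are complete and cocomplete and topological functors lift all limits and colimits, so are ${\bf Met}_{\sV}\text{-}\Cat$, ${\bf Metag}_{\sV}$ and ${\bf Met}_{\sV}\text{-}{\bf Gph}$.

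For the left adjoints I would apply Proposition~\ref{Wyler} to the two commuting squares of the diagram separately (commutativity of the left composite being Corollary~\ref{metag cor}). In the right square the bottom edge is ${\rm Id}_{\bf Gph}$, its own left adjoint, and the left edge ${\bf Metag}_{\sV}\to{\bf Gph}$ is topological; in the left square the bottom edge $\Cat\to{\bf Gph}$ has the free-category left adjoint and the left edge ${\bf Met}_{\sV}\text{-}\Cat\to\Cat$ is topological. The one remaining hypothesis in each case is that the top functor $G$ maps initial sources to initial sources, and here the agreement of the initial-structure formulae does the job: if $(F_i\colon\mathbb X\to\mathbb Y_i)$ is $U$-initial in ${\bf Met}_{\sV}\text{-}\Cat$, then the induced metagory satisfies $\delta(f,g,a)=d(g\cdot f,a)=\bw_id_i(F_ig\cdot F_if,F_ia)=\bw_i\delta_i(F_if,F_ig,F_ia)$, which is the $V$-initial metagory structure on the underlying graph; and if $(F_i\colon\mathbb X\to\mathbb Y_i)$ is initial in ${\bf Metag}_{\sV}$ over ${\bf Gph}$, then $d(f,f')=\delta(1_x,f,f')=\bw_i\delta_i(1_{F_ix},F_if,F_if')=\bw_id_i(F_if,F_if')$, the initial $\sV$-metric-graph structure. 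Proposition~\ref{Wyler} then delivers left adjoints $F$ with $UF=HV$; concretely, the free $\sV$-metric category on a $\sV$-metagory has the free category on its underlying graph underneath, while the free $\sV$-metagory on a $\sV$-metric graph leaves that graph unchanged.

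I expect no genuine obstacle: the argument is almost pure bookkeeping. Care is needed only in two spots — getting the direction of the interaction between $\otimes$ and infima right when checking the associativity laws, and correctly assigning the forgetful functors to the roles of $U$, $V$ and $J$ in each square while confirming that the squares commute. Size considerations (sources indexed by a proper class) cause no trouble, since the defining infimum is meaningful in any case.
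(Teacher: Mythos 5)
Your proposal is correct and follows essentially the same route as the paper: topologicity of the three vertical forgetful functors via the displayed infimum formulae for initial structures, followed by two applications of Wyler's Taut Lift Theorem (Proposition~\ref{Wyler}) to the commuting squares, with preservation of initiality by the horizontal functors verified by the agreement of the initial-structure formulae. The extra verifications you supply (the axioms for the infimum structure, the initiality check, and the explicit computations $d(g\cdot f,a)=\bw_i\delta_i(F_if,F_ig,F_ia)$ and $\delta(1_x,f,f')=\bw_i d_i(F_if,F_if')$) are exactly the details the paper leaves implicit.
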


\begin{rem}
(1) According to Wyler's construction, the underlying category of the free $\sV$-metric category ${\sf Path}{\mathbb X}$ over a given $\sV$-metagory $\mathbb X$ is the free category over the underlying graph of $\mathbb X$ (consisting of all paths in the graph, such that the given identity morphisms get identified with empty paths); its $\sV$-metric makes the insertion ${\mathbb X}\to {\sf Path}{\mathbb X}$ $\sV$-contractive, which then serves as the unit of the adjunction at $\mathbb X$. Our goal is to show that the insertion is actually an isometry, by taking advantage of a generalized Yoneda embedding of $\mathbb X$, which we present next.

(2) Aliouche and Simpson \cite{AlioucheSimpson2017} gave an {\em ad-hoc} description of the metric of ${\sf Path}{\mathbb X}$ in case $\sV={\mathbb R}_+$. To confirm that ${\mathbb X}\to{\sf Path}{\mathbb X}$ becomes an isometry, they resort to the effect of the Yoneda construction (see Section 7) on objects, without establishing the construction fully.
\end{rem}

\section{$\sV$-distributors}
Let us first shed some further light on the separated $\sV$-metric space $\sV$ with its $\sV$-metric $d$, defined 
by $d_{\sV}(u,v)=(u\multimap v)\wedge(v\multimap u)$ as in Remark \ref{V-metric remarks}(2), and make explicit its compatibility with the $\otimes$-operation. For $\sV={\mathbb R}_+$, this compatibility condition amounts precisely to the fact that the addition $[0,\infty]\times[0,\infty]\to[0,\infty]$ is contractive when, based on the Euclidean distance of the extended real half-line, the domain carries the $+$-metric.

\begin{prop}\label{V monoid}
The binary operation $\otimes$ of $\sV$ is a $\sV$-contractive map $\sV\otimes\sV\to\sV$, that is:
$$d_{\sV}(u,v)\otimes d_{\sV}(w,z)\leq d_{\sV}(u\otimes w,v\otimes z)$$
for all $u,v,w,z\in\sV$.
Consequently, $(\sV,\otimes,\sk)$ is an internal monoid in the monoidal category ${\bf Met}_{\sV}$.
\end{prop}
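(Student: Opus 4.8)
The plan is to verify the claimed inequality
$$d_{\sV}(u,v)\otimes d_{\sV}(w,z)\leq d_{\sV}(u\otimes w,v\otimes z)$$
directly from the adjunction $z\leq u\multimap v\iff z\otimes u\leq v$, and then observe that the contractivity statement is precisely what is needed to view $\otimes$ as an internal monoid map. Recall that $d_{\sV}(u,v)=(u\multimap v)\wedge(v\multimap u)$, so it suffices by symmetry to prove the ``one-sided'' inequality
$$(u\multimap v)\otimes(w\multimap z)\leq (u\otimes w)\multimap(v\otimes z),$$
and then take the infimum of this with the same inequality obtained by swapping $(u,w)$ with $(v,z)$; since $d_{\sV}(u,v)\otimes d_{\sV}(w,z)\leq (u\multimap v)\otimes(w\multimap z)$ and likewise for the swapped pair (as $\otimes$ is monotone and $\wedge$ lies below each of its arguments), both factors on the left are dominated, and $a\leq b$ together with $a\leq c$ gives $a\leq b\wedge c$.

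For the one-sided inequality, I would unravel it through the defining adjunction: $(u\multimap v)\otimes(w\multimap z)\leq (u\otimes w)\multimap(v\otimes z)$ holds if and only if
$$\big((u\multimap v)\otimes(w\multimap z)\big)\otimes(u\otimes w)\leq v\otimes z.$$
Using commutativity and associativity of $\otimes$, the left-hand side rearranges to $\big((u\multimap v)\otimes u\big)\otimes\big((w\multimap z)\otimes w\big)$. Now the counit of each adjunction gives $(u\multimap v)\otimes u\leq v$ and $(w\multimap z)\otimes w\leq z$, and monotonicity of $\otimes$ in each variable finishes the step: the product is $\leq v\otimes z$. This establishes the desired contractivity.

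For the final sentence, I would note that an internal monoid in the symmetric monoidal category ${\bf Met}_{\sV}$ (whose monoidal structure is recalled in Remark \ref{V-metric remarks}(3)) consists of an object, a multiplication $\sV\otimes\sV\to\sV$, and a unit $\sI\to\sV$ subject to the associativity and unit axioms; here $\sV$ is already a monoid on the underlying set level with operation $\otimes$ and neutral element $\sk$, and the unit map picks out $\sk$. The associativity and unit diagrams commute automatically because they are equations of underlying functions that hold in the monoid $(\sV,\otimes,\sk)$, and those functions are $\sV$-contractive: the unit map $\sI\to\sV$ is trivially contractive since $\sI$ is a singleton whose point has self-distance $\sk\leq d_{\sV}(\sk,\sk)$, and the multiplication is contractive by the inequality just proved (together with the evident fact that identity and tensor-of-contractions are contractive). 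Hence all the structure maps live in ${\bf Met}_{\sV}$ and $(\sV,\otimes,\sk)$ is an internal monoid there.

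The only mildly delicate point is the bookkeeping in passing from the one-sided inequality to the two-sided one via the meet, and making sure the monotonicity of $\otimes$ is invoked legitimately; there is no serious obstacle, as everything reduces to the adjunction $(-)\otimes u\dashv u\multimap(-)$ and the fact that $\otimes$ preserves order in each variable (being a left adjoint in each variable, it even preserves suprema). I expect the proof to be short.
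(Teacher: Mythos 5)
Your proof is correct and follows essentially the same route as the paper: derive the one-sided inequality $(u\multimap v)\otimes(w\multimap z)\leq(u\otimes w)\multimap(v\otimes z)$ from the adjunction and the counit $(u\multimap v)\otimes u\leq v$, then combine it with the swapped version via the meet, the internal-monoid statement following since the monoid equations already hold on underlying sets. The only difference is cosmetic bookkeeping in how the meet is assembled, so there is nothing to add.
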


\begin{proof}
Since $$(u\multimap v)\otimes(w\multimap z)\otimes u\otimes w= ((u\multimap v)\otimes u)\otimes((w\multimap z)\otimes w)\leq v\otimes z,$$ one obtains
$(u\multimap v)\otimes(w\multimap z)\leq (u\otimes w)\multimap (v\otimes z)$ and, likewise, $(v\multimap u)\otimes(z\multimap w)\leq(v\otimes z)\multimap(u\otimes w)$. Consequently,
\begin{align*}
d(u,v)\otimes d(w,z) &=  ((u\multimap v)\otimes(v\multimap u))\wedge ((w\multimap z)\otimes(z\multimap w))\\
&\leq ((u\multimap v)\otimes(w\multimap z))\wedge  ((v\multimap u)\otimes(z\multimap w))\\
&\leq ((u\otimes w)\multimap (v\otimes z))\wedge ((v\otimes z)\multimap(u\otimes w))\\
& = d(u\otimes w,v\otimes z).
\end{align*}
\end{proof}

For $\sV$-metric spaces $X,Y$, a $\sV$-{\em distributor} \cite{Borceux1994} (also: $\sV$-{\em bimodule} \cite{Lawvere1973}) $\varphi:X\rto Y$ is given by a family $(\phi(x,y))_{x\in X,y\in Y}$ of values in $\sV$ satisfying the conditions
$$\phi(x,y)\otimes d_Y(y,y')\leq\phi(x,y'),\quad d_X(x',x)\otimes\phi(x,y)\leq\phi(x',y),\quad(*)$$
for all $x,x'\in X,\; y,y'\in Y$; the composition rule for $\phi$ followed by $\psi:Y\rto Z$ says
$$(\psi\circ\phi)(x,z)=\bv_{y\in Y}\phi(x,y)\otimes\psi(y,z).$$
Since $(*)$ gives $d_Y\circ\phi=\phi$ and $\phi\circ d_X=\phi$, the structure of a $\sV$-metric space $X$ serves as the identity morphism on $X$ in the category ${\bf Dist}_{\sV}$ of $\sV$-metric spaces and their $\sV$-distributors. Every $\sV$-contractive map $f:X\to Y$ induces the $\sV$-distributors $f_*:X\rto Y,\; f^*:Y\rto X$, given by
$$f_*(x,y)=d_Y(fx,y),\;f^*(y,x)=d_Y(y,fx)$$
for all $x\in X,\,y\in Y$. The assignments $f\mapsto f_*,\;f\mapsto f^*$ define identity-on-objects functors
$${\bf Met}_{\sV}\to {\bf Dist}_{\sV}\longleftarrow({\bf Met}_{\sV})^{\op}.$$

Every $v\in \sV$ may be considered a $\sV$-distributor $v:{\sf I}\rto{\sf I}$ of the one-point $\otimes$-neutral $\sV$-metric space $\sf I$ which, when we consider $\sV$ with its monoid structure as a one-object category, defines a full embedding
$$\sV\to{\bf Dist}_{\sV}.$$
This embedding becomes an isometry when we extend the $\sV$-metric $d= d_{\sV}$ of $\sV$ to the hom-sets of ${\bf Dist}_{\sV}$, by putting
$$d(\phi,\phi'):=\bw_{x\in X,y\in Y}d(\phi(x,y),\phi'(x,y)),$$
for all $\phi,\phi':X\rto Y,\, X,Y$ in ${\bf Met}_{\sV}$. One easily sees that ${\bf Met}_{\sV}(X,Y)$ has now become a $\sV$-metric space. More importantly, in generalization of Proposition \ref{V monoid}, the composition map
$${\bf Dist}_{\sV}(X,Y)\otimes{\bf Dist}_{\sV}(Y,Z)\to{\bf Dist}_{\sV}(X,Z)$$
is $\sV$-contractive, for all $\sV$-metric spaces $X,Y,Z$, as we show next.

\begin{prop}\label{dist}
${\bf Dist}_{\sV}$ is a ${\bf Met}_{\sV}$-enriched category; its hom-sets are separated $\sV$-metric spaces.
\end{prop}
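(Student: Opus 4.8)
The plan is to verify the two enrichment axioms directly from the distributor conditions. The statement has two parts: first, that $\mathbf{Dist}_\sV$ is enriched in $\mathbf{Met}_\sV$ — which by Definition \ref{metric cat} means each hom-set $\mathbf{Dist}_\sV(X,Y)$ is a $\sV$-metric space and the composition maps $\mathbf{Dist}_\sV(X,Y)\otimes\mathbf{Dist}_\sV(Y,Z)\to\mathbf{Dist}_\sV(X,Z)$ are $\sV$-contractive; and second, that each such hom-set is separated. The $\sV$-metric space structure on $\mathbf{Dist}_\sV(X,Y)$ is already asserted in the text (it is inherited pointwise from $d_\sV$ via an infimum over $X\times Y$), so the axioms 1--3 of Definition \ref{V-Met} for $d(\phi,\phi')=\bigwedge_{x,y}d_\sV(\phi(x,y),\phi'(x,y))$ are immediate from the corresponding axioms for $d_\sV$ and the fact that $\otimes$ commutes with the infimum over a finite/indexed family only in the sense needed (actually one uses $u\otimes\bigwedge_i v_i\leq\bigwedge_i(u\otimes v_i)$, which holds since $\otimes$ preserves order). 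Separatedness is also routine: if $\sk\leq d(\phi,\phi')$ then $\sk\leq d_\sV(\phi(x,y),\phi'(x,y))$ for every $x,y$, hence $\phi(x,y)=\phi'(x,y)$ since $\sV$ is separated as a $\sV$-metric space (Remark \ref{V-metric remarks}(2)).

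**The substantive step** is $\sV$-contractivity of composition, i.e. proving
$$d(\phi,\phi')\otimes d(\psi,\psi')\leq d(\psi\circ\phi,\,\psi'\circ\phi')$$
for all $\phi,\phi':X\rto Y$ and $\psi,\psi':Y\rto Z$. Unravelling the right-hand side, it suffices to fix $x\in X$, $z\in Z$ and show
$$d(\phi,\phi')\otimes d(\psi,\psi')\;\leq\;d_\sV\Bigl(\bv_{y}\phi(x,y)\otimes\psi(y,z),\ \bv_{y}\phi'(x,y)\otimes\psi'(y,z)\Bigr).$$
Since $d_\sV(a,b)=(a\multimap b)\wedge(b\multimap a)$ and the two halves are symmetric, it is enough to bound $d(\phi,\phi')\otimes d(\psi,\psi')$ above by $\bigl(\bv_y\phi(x,y)\otimes\psi(y,z)\bigr)\multimap\bigl(\bv_y\phi'(x,y)\otimes\psi'(y,z)\bigr)$. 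By the adjunction $z\leq u\multimap v\iff z\otimes u\leq v$, this reduces to checking, for each individual $y$,
$$d(\phi,\phi')\otimes d(\psi,\psi')\otimes\phi(x,y)\otimes\psi(y,z)\;\leq\;\bv_{y'}\phi'(x,y')\otimes\psi'(y',z),$$
and it is enough to hit the single summand $y'=y$ on the right. Now $d(\phi,\phi')\leq d_\sV(\phi(x,y),\phi'(x,y))\leq\phi(x,y)\multimap\phi'(x,y)$, so $d(\phi,\phi')\otimes\phi(x,y)\leq\phi'(x,y)$; similarly $d(\psi,\psi')\otimes\psi(y,z)\leq\psi'(y,z)$. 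Combining these (using commutativity of $\otimes$ to regroup) gives the desired inequality. This is, in effect, the argument of Proposition \ref{V monoid} run pointwise and then pushed through the suprema defining composition.

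**I do not expect a serious obstacle here** — the proof is a bookkeeping exercise with the $\otimes$-$\multimap$ adjunction — but the one point demanding care is the interchange of $\otimes$ with the supremum $\bv_y$ in the composite: one must exploit that $\otimes$ \emph{preserves} suprema in each variable (so $v\otimes\bv_y(\cdots)=\bv_y(v\otimes\cdots)$), which is exactly why the tensor can be moved inside before picking the diagonal summand $y'=y$, whereas the infimum $\bigwedge_{x,y}$ defining $d(\phi,\phi')$ interacts with $\otimes$ only via the order-preservation inequality $u\otimes\bigwedge\leq\bigwedge(u\otimes-)$, which is all that is needed on that side. Once the single-$y$ inequality is established, reassembling via $\bv$ on the right and $\bigwedge$ on the left over all $x,z$ is formal. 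I would present the contractivity computation as a short \texttt{align*} display and dispatch the metric-space axioms and separatedness in a sentence each.
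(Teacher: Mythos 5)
Your proof is correct and follows essentially the same route as the paper's: the same key pointwise inequalities $d(\phi,\phi')\otimes\phi(x,y)\leq\phi'(x,y)$ and $d(\psi,\psi')\otimes\psi(y,z)\leq\psi'(y,z)$, pushed through the join $\bv_y$ using distributivity of $\otimes$ over suprema, then the $\multimap$-adjunction and a symmetry argument to assemble $d_\sV$ of the composites, with separatedness inherited pointwise from $\sV$. No substantive difference.
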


\begin{proof}
In order to confirm the $\sV$-contractivity of the composition map, we must show
$$d(\phi,\phi')\otimes d(\psi,\psi')\leq d(\psi\circ\phi,\psi'\circ\phi'),$$
for all $\phi,\phi':X\rto Y,\, \psi,\psi':Y\rto Z$. By definition of $d(\phi,\phi')$ one has $d(\phi,\phi')\leq \phi(x,y)\multimap \phi'(x,y)$
 and, hence, $d(\phi,\phi')\otimes\phi(x,y)\leq \phi'(x,y)$, for all $x\in X,\, y\in Y$. Likewise, $d(\psi,\psi')\otimes\psi(y,z)\leq\psi'(y,z),$ for all $y\in Y,\,z\in Z$. Consequently, for all $x\in X,\, z\in Z$ one obtains
 $$\bv_{y\in Y}d(\phi,\phi')\otimes d(\psi,\psi')\otimes\phi(x,y)\otimes\psi(y,z)\leq\bv_{y \in Y}\phi'(x,y)\otimes\psi'(x,y).$$ Since $\otimes$ distributes over joins, this means
$d(\phi,\phi')\otimes d(\psi,\psi')\otimes (\psi\circ\phi)(x,z)\leq(\psi'\circ\phi')(x,z)$, or
$$d(\phi,\phi')\otimes d(\psi,\psi")\leq(\psi\circ\phi)(x,z)\multimap(\psi'\circ\phi')(x,z)$$
for all $x\in X, z\in Z$. For symmetry reasons, this gives the inequality
$$d(\phi,\phi')\otimes d(\psi,\psi')\leq\bw_{x\in X, z\in Z}((\psi\circ\phi)(x,z)\multimap(\psi'\circ\phi')(x,z))\wedge((\psi'\circ\phi')(x,z)\multimap(\psi\circ\phi)(x,z)),$$
as desired. The hom-sets ${\bf Dist}_{\sV}(X,Y)$ inherit separatedness from $\sV$.
\end{proof}

\begin{rem}\label{opposite}
For every $\sV$-metagory $\mathbb X$ one defines its {\em opposite $\sV$-metagory} ${\mathbb X}^{\rm{op}}$ by
$${\rm{ob}}({\mathbb X}^{\op})={\rm{ob}}{\mathbb X},\quad{\mathbb X}^{\op}(y,x)={\mathbb X}(x,y),\quad\delta_{{\mathbb X}^{\op}}(g,f,a)=\delta_{\mathbb X}(f,g,a),$$
for all $f:x\to y,\,g:y\to z,\,a:x\to z$ in $\mathbb X$.
\end{rem}
In conjunction with Remark \ref{sep remark} we conclude:
\begin{cor}
For every (small) $\sV$-metagory $\mathbb X$, the $\sV$-metagory $[{\mathbb X}^{\op},{\bf Dist}_{\sV}]$ is (induced by) a separated $\sV$-metric category.
\end{cor}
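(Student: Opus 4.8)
The plan is to combine two facts already established in the excerpt. First, Proposition \ref{dist} tells us that $\mathbf{Dist}_{\sV}$ is a $\mathbf{Met}_{\sV}$-enriched category whose hom-sets are \emph{separated} $\sV$-metric spaces; that is, $\mathbf{Dist}_{\sV}$ is a separated $\sV$-metric category in the sense of Definition \ref{metric cat}, and hence may be viewed as a $\sV$-metagory via the induced $\sV$-area functor of Proposition \ref{metag prop}. Second, Remark \ref{sep remark} asserts that if $\mathbb Y$ is (induced by) a separated $\sV$-metric category and $\mathbb X$ is any $\sV$-metagory, then $[\mathbb X,\mathbb Y]$ is again (induced by) a separated $\sV$-metric category. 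So the proof is essentially: take $\mathbb Y = \mathbf{Dist}_{\sV}$ and $\mathbb X = \mathbb X^{\op}$ (the opposite $\sV$-metagory of Remark \ref{opposite}, which is again a small $\sV$-metagory when $\mathbb X$ is small), and apply Remark \ref{sep remark}.

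Concretely, I would first record that $\mathbf{Dist}_{\sV}$, by Proposition \ref{dist}, satisfies the hypothesis of Remark \ref{sep remark}: it is a $\sV$-metric category (all composition maps are $\sV$-contractive by the displayed inequality in that proof) and all of its hom-sets are separated. Next I would note that $\mathbb X^{\op}$ is a well-defined small $\sV$-metagory whenever $\mathbb X$ is, directly from Remark \ref{opposite}. Then the conclusion of Remark \ref{sep remark}, applied with that $\mathbb X^{\op}$ in the first slot and $\mathbf{Dist}_{\sV}$ in the second, yields at once that the function space $\sV$-metagory $[\mathbb X^{\op},\mathbf{Dist}_{\sV}]$ of Theorem \ref{main thm} is induced by a separated $\sV$-metric category, namely the one whose objects are the $\sV$-contractors $\mathbb X^{\op}\to\mathbf{Dist}_{\sV}$, whose morphisms are natural transformations (which, because $\mathbf{Dist}_{\sV}$ is separated, reduce to ordinary component families satisfying strict naturality), composed componentwise, and whose $\sV$-metric is $d_{[\mathbb X^{\op},\mathbf{Dist}_{\sV}]}(\alpha,\alpha')=\bw_{x}d_{\mathbf{Dist}_{\sV}}(\alpha_x,\alpha'_x)$ as in Remark \ref{VMetCat}.

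There is essentially no obstacle here: the corollary is a direct instantiation of two already-proven results, and the only thing to be careful about is the bookkeeping that ``separated $\sV$-metric category'' really is the hypothesis of Remark \ref{sep remark} and the conclusion of Proposition \ref{dist}, and that passing to the opposite preserves both smallness and the $\sV$-metagory axioms (the two identity laws and the two associativity laws of Definition \ref{metagory def} are swapped into each other under the $g\leftrightarrow f$ transposition in $\delta$, so they are jointly preserved). If one wishes to be more self-contained one can also re-derive the statement without invoking Remark \ref{sep remark}: verify directly that the componentwise composite of two natural transformations of $\sV$-contractors into $\mathbf{Dist}_{\sV}$ is again natural (using separatedness of the hom-sets of $\mathbf{Dist}_{\sV}$ to turn the two inequalities of Definition \ref{nat trans def} into equalities), check the $\sV$-metric category axioms for $[\mathbb X^{\op},\mathbf{Dist}_{\sV}]$ using that infima of $\sV$-contractive composition maps are $\sV$-contractive, and then match this structure against the $\delta$ of Theorem \ref{main thm} via $\delta(g\cdot f,a)=d(g\cdot f,a)$ evaluated componentwise; but the one-line deduction from Remark \ref{sep remark} is the cleaner route.
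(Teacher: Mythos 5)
Your proposal is correct and is exactly the paper's argument: the corollary is stated there as an immediate consequence of Proposition \ref{dist} (which exhibits ${\bf Dist}_{\sV}$ as a $\sV$-metric category with separated hom-sets) combined with Remark \ref{sep remark}, applied to ${\mathbb X}^{\op}$ from Remark \ref{opposite}. Your additional checks (that passing to the opposite preserves the metagory axioms, and the optional direct verification) are fine but not needed beyond what the paper intends.
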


\section{The Yoneda $\sV$-contractor}
For a (small) $\sV$-metagory $\mathbb X$, we would like to establish an injective isometry
$$\sy:{\mathbb X}\to[{\mathbb X}^{\op},{\bf Dist}_{\sV}]$$
in such a way that, when $\mathbb X$ is actually a $\sV$-metric category, $\sy$ factors through the Yoneda embedding of $\mathbb X$ into $[{\mathbb X}^{\op},{\bf Met}_{\sV}]$. To this end we will have to require an additional condition on the metagory $\mathbb X$, termed {\em absolute (left/right) transitivity} in \cite{AlioucheSimpson2017} in case $\sV={\mathbb R}_+$; here we leave off the adjective absolute:
\begin{defn}\label{trans def}
A $\sV$-metagory $\mathbb X$ is {\em left transitive} if 
$$\delta(f,b,c)\otimes\delta(g,h,b)\leq\bv_{a:x\rightarrow z}\delta(f,g,a)\otimes\delta(a,h,c),$$
for all $f:x\to y, g:y\to z, h:z\to w, b:y\to w, c:x\to w$ in $\mathbb X$, and {\em right transitive} if
$$\delta(f,g,a)\otimes\delta(a,h,c)\leq\bv_{b:y\rightarrow w}\delta(f,b,c)\otimes\delta(g,h,b)$$
for all $f,g,h,c$ as above and $a:x\rightarrow z$ in $\mathbb X$. {\em Transitivity} is the conjunction of both properties and amounts to
$$\bv_{a:x\rightarrow z}\delta(f,g,a)\otimes\delta(a,h,c)=\bv_{b:y\rightarrow w}\delta(f,b,c)\otimes\delta(g,h,b),$$ for all $f,g,h,c$ as above.
\end{defn}

\begin{rem}\label{trans remarks}
(1) {\em Every $\sV$-metagory induced by a $\sV$-metric category is transitive.} To see that it is right transitive, given $f,g,h,a,c$ as above, just consider $b:=h\cdot g$; likewise for left transitivity.

(2) A $\sV$-metagory $\mathbb X$ which comes with an injective isometry into a transitive $\sV$-metagory $\mathbb Y$ may fail to be right (or left) transitive, even when $\mathbb Y$ is induced by a separated $\sV$-metric category (unless $\sV$ is the trivial quantale $\sf 1$). Just consider the 4-object $\sV$-metagory $\mathbb X$ whose only non-identical arrows are $f:x\to y, g:y\to z,h:z\to w, a:x\to z, c:x\to w$, and let 
$\mathbb Y$ have just one additional arrow, $b:y\to w$; then, letting all areas/distances equal $\sk$, the inclusion ${\mathbb X}\hookrightarrow{\mathbb Y}$ confirms the claim.

(3) There is an ${\mathbb R}_+$-metagory for which there is no injective isometry mapping it into an ${\mathbb R}_+$-metric category: see Example 6.6 of \cite{AlioucheSimpson2017}. Hence, our goal is not reachable without additional conditions on the given metagory.
\end{rem}

For any $\sV$-metagory $\mathbb X$ and every object $w$ and morphism $f:x\to y$ in ${\mathbb X}$, one has the $\sV$-distributor
$$\sy_w(f):{\mathbb X}(y,w)\rto{\mathbb X}(x,w),\quad
\sy_w(f)(b,c)=\delta(f,b,c),$$ for all $b:y\to w, c:x\to w$. Indeed, given another morphism $c':x\to w$, since $\sk\leq\delta(1_x,f,f)$, left associativity applied to the tetrahedron $(1_x,f,b;f,c,c')$ shows
$$\sy_w(f)(b,c)\otimes d(c,c')\leq\delta(1_x,f,f)\otimes\delta(f,b,c)\otimes \delta(1_x,c,c')\leq\sy_w(f)(b,c').$$
This is one of the two inequalities to be checked; the other follows similarly.

\begin{prop}
For every object $w$ in a left-transitive $\sV$-metagory $\mathbb X$ one has the $\sV$-contractor
$$\sy_w:{\mathbb X}^{\rm{op}}\to{\bf Dist}_{\sV},\quad x\mapsto {\mathbb X}(x,w).$$
\end{prop}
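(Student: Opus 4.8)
The plan is to verify that $\sy_w$ satisfies the two defining properties of a $\sV$-contractor of $\sV$-metagories, namely preservation of identity morphisms and contraction of areas. Recall that the target here is the $\sV$-metagory induced (via Proposition \ref{metag prop}) by the $\sV$-metric category ${\bf Dist}_{\sV}$, so the area function on the right-hand side is $\delta(\phi,\psi,\chi)=d(\psi\circ\phi,\chi)$, where $d$ is the sup-style $\sV$-metric on hom-sets of ${\bf Dist}_{\sV}$ introduced before Proposition \ref{dist}. The preceding paragraph has already checked that $\sy_w(f)$ is a well-defined $\sV$-distributor ${\mathbb X}(y,w)\rto{\mathbb X}(x,w)$ for each $f:x\to y$; so what remains is purely the contractor axioms.

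\textbf{Identity preservation.} First I would check that $\sy_w(1_x)$ is the identity $\sV$-distributor on ${\mathbb X}(x,w)$, i.e.\ that $\sy_w(1_x)(b,c)=\delta(1_x,b,c)$ equals the $\sV$-metric $d(b,c)$ on ${\mathbb X}(x,w)$. But by Proposition \ref{Vgraph prop} the $\sV$-metric on the hom-set ${\mathbb X}(x,w)$ is defined precisely as $d(b,c)=\delta(1_x,b,c)$, so this is immediate. Hence $\sy_w$ sends $1_x$ (the identity of ${\mathbb X}^{\op}$ at $x$, which is $1_x$ of $\mathbb X$) to the identity $\sV$-distributor on ${\mathbb X}(x,w)$, which is the identity morphism at the object ${\mathbb X}(x,w)$ of ${\bf Dist}_{\sV}$.

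\textbf{Area contraction --- the main point.} For morphisms $g:y\to z$ and $f:x\to y$ of $\mathbb X$, regarded as morphisms $g:z\to y$ and $f:y\to x$ of ${\mathbb X}^{\op}$ with ``composite'' candidate $a:x\to z$ in $\mathbb X$ (i.e.\ $a:z\to x$ in ${\mathbb X}^{\op}$, a morphism fitting the source of $g$ to the target of $f$), I must show
$$\delta_{\mathbb X}(f,g,a)\ \leq\ \delta_{{\bf Dist}_{\sV}}\bigl(\sy_w(g),\sy_w(f),\sy_w(a)\bigr)\ =\ d\bigl(\sy_w(f)\circ\sy_w(g),\,\sy_w(a)\bigr).$$
Here $\sy_w(g):{\mathbb X}(z,w)\rto{\mathbb X}(y,w)$ and $\sy_w(f):{\mathbb X}(y,w)\rto{\mathbb X}(x,w)$, so the composite distributor $\sy_w(f)\circ\sy_w(g):{\mathbb X}(z,w)\rto{\mathbb X}(x,w)$ is given at $(h,c)$ (with $h:z\to w$, $c:x\to w$) by $\bigvee_{b:y\to w}\delta(g,h,b)\otimes\delta(f,b,c)$, while $\sy_w(a)(h,c)=\delta(a,h,c)$. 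By definition of $d$ on ${\bf Dist}_{\sV}$, unwinding through $z\leq u\multimap v\iff z\otimes u\leq v$, it suffices to prove, for all $h:z\to w$ and $c:x\to w$, the two inequalities
$$\delta_{\mathbb X}(f,g,a)\otimes\Bigl(\bv_{b:y\to w}\delta(g,h,b)\otimes\delta(f,b,c)\Bigr)\ \leq\ \delta(a,h,c),$$
$$\delta_{\mathbb X}(f,g,a)\otimes\delta(a,h,c)\ \leq\ \bv_{b:y\to w}\delta(g,h,b)\otimes\delta(f,b,c).$$
The first follows by distributing $\otimes$ over the join and applying the \emph{left associativity law} to the tetrahedron $(f,g,h;a,b,c)$ for each $b$. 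The second is exactly the statement of \emph{left transitivity} (Definition \ref{trans def}), applied to $f,g,h,b,c$, after tensoring with $\delta(f,g,a)$ --- wait, more precisely, left transitivity gives $\delta(f,b,c)\otimes\delta(g,h,b)\leq\bv_{a'}\delta(f,g,a')\otimes\delta(a',h,c)$, which is the reverse direction; so the second inequality is where I must be careful. In fact the clean route is: the right-hand join $\bv_b\delta(g,h,b)\otimes\delta(f,b,c)$ is, by left transitivity, \emph{equal to} $\bv_a \delta(f,g,a)\otimes\delta(a,h,c)$ is not quite right either since transitivity mixes two joins; instead I should observe that left transitivity directly says $\bv_b\delta(f,b,c)\otimes\delta(g,h,b)\geq\delta(f,g,a)\otimes\delta(a,h,c)$ for the \emph{particular} $a$ at hand (the join $\bv_{a'}$ dominates the single term), giving the second inequality immediately. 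So the two inequalities combine to give $d(\sy_w(f)\circ\sy_w(g),\sy_w(a))\geq\delta_{\mathbb X}(f,g,a)$, as required.

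\textbf{Expected obstacle.} The only genuinely delicate point is bookkeeping the op-reversal: in ${\mathbb X}^{\op}$ a composable pair is $(g,f)$ with area $\delta_{{\mathbb X}^{\op}}(g,f,a)=\delta_{\mathbb X}(f,g,a)$, and correspondingly $\sy_w$ must send this pair to the pair $(\sy_w(g),\sy_w(f))$ in ${\bf Dist}_{\sV}$ --- one must be consistent about which distributor is composed first. Once the indices are lined up, the left identity law handles identities, the left associativity law handles one inequality, and left transitivity handles the other; right-transitivity and the right laws are not needed, which matches the hypothesis that only \emph{left} transitivity is assumed. I would also remark in passing that when $\mathbb X$ is induced by a $\sV$-metric category, $\sy_w(f)$ is the representable distributor $(g\cdot f)_*$-type object and $\sy_w$ restricts to the classical Yoneda embedding, consistent with the stated goal.
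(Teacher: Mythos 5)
Your setup matches the paper's proof: the identity step via $\sy_w(1_x)(c,c')=\delta(1_x,c,c')=d(c,c')$, the reduction of $\delta(f,g,a)\leq d(\sy_w(f)\circ\sy_w(g),\sy_w(a))$ to two $\multimap$-inequalities for each $h:z\to w$, $c:x\to w$, and the first of these via the left associativity law applied to $(f,g,h;a,b,c)$ for each $b$, with correct op-bookkeeping throughout. The genuine gap is your justification of the second inequality
$\delta(f,g,a)\otimes\delta(a,h,c)\leq\bv_{b:y\rightarrow w}\delta(g,h,b)\otimes\delta(f,b,c)$.
You assert that ``left transitivity directly says $\bv_b\delta(f,b,c)\otimes\delta(g,h,b)\geq\delta(f,g,a)\otimes\delta(a,h,c)$ for the particular $a$ at hand''; it does not. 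Left transitivity (Definition \ref{trans def}) bounds each \emph{single} term $\delta(f,b,c)\otimes\delta(g,h,b)$ above by the join $\bv_{a'}\delta(f,g,a')\otimes\delta(a',h,c)$; it yields no lower bound on the join over $b$. The inequality you need is, verbatim up to commutativity of $\otimes$, the \emph{right} transitivity law, and it cannot be recovered from left transitivity alone: in the metagory of Remark \ref{trans remarks}(2) (no morphism $y\to w$, all existing areas equal to $\sk$), which is left transitive, the join over $b$ is empty while $\delta(f,g,a)\otimes\delta(a,h,c)=\sk$, so the inequality fails and $\sy_w$ is not contractive whenever $\sV$ is nontrivial.

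At exactly this step the paper's own proof invokes ``the right transitivity hypothesis'': the condition actually used is right transitivity of $\mathbb X$, equivalently left transitivity of ${\mathbb X}^{\op}$, and the label ``left-transitive'' in the statement sits awkwardly with the naming of Definition \ref{trans def}. So your instinct to worry about this inequality was right, but the resolution you chose is a flipped reading of the definition, and your closing remark that ``right-transitivity \ldots\ is not needed'' is wrong. The repair is simply to cite the right transitivity law for the pair $(f,g)$ with composite candidate $a$ (join over $b:y\rightarrow w$), as the paper does, or to phrase the hypothesis as left transitivity of the opposite metagory on which $\sy_w$ is defined.
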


\begin{proof}
With $d$ denoting the $\sV$-metric of the hom-sets of $\mathbb X$, first we observe
$$\sy_w(1_x)=\delta(1_x,c,c')=d(c,c')$$
for all $c,c':x\to w$, which shows that $\sy_w$ preserves identity morphisms. Next,
we need to confirm the inequality $\delta(f,g,a)\leq d(\sy_w(f)\circ\sy_w(g),\sy_w(a))$, for all $f:x\to y, g:y\to z, a:x\to z$ in $\mathbb X$. To this end, for all morphisms $h:z\to w, c:x\to w$, on one hand the left associativity law gives
$$\delta(f,g,a)\otimes\bv_{b:y\rightarrow w}(\delta(g,h,b)\otimes\delta(f,b,c))\leq\delta(a,h,c),$$
which implies 
$\delta(f,g,a)\leq(\sy_w(f)\circ\sy_w(g))(h,c)\multimap\delta(a,h,c)).$ On the other hand, from the right transitivity hypothesis one has $\delta(f,g,a)\leq \delta(a,h,c)\multimap (\sy_w(f)\circ\sy_w(g))(h,c)$. Combining the last two inequlaities we obtain
$$\delta(f,g,a)\leq \bw_{h:z\rightarrow w,c:x\rightarrow w}((\sy_w(f)\circ\sy_w(g))(h,c)\multimap\sy_w(a)(h,c))\wedge ( \sy_w(a)(h,c)\multimap (\sy_w(f)\circ\sy_w(g))(h,c)),$$
which is in fact the inequality that needed to be confirmed.
\end{proof}

\begin{prop}
For every morphism $m:w\to v$ in a transitive $\sV$-metagory $\mathbb X$ one has a natural transformation
$\sy_m:\sy_w\to\sy_v$ with
$$(\sy_m)_x:{\mathbb X}(x,w)\rto{\mathbb X}(x,v),\quad(\sy_m)_x(c,e)=\delta(c,m,e)$$
for all $c:x\to w,\,e:x\to v$ in $\mathbb X$. For $m=1_w$, $\sy_m$ is the identity transformation of $\sy_w$.
\end{prop}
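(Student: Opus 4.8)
The plan is to verify the three pieces of data claimed: (i) that each $(\sy_m)_x$ is a well-defined $\sV$-distributor ${\mathbb X}(x,w)\rto{\mathbb X}(x,v)$, (ii) that the family $((\sy_m)_x)_{x\in\ob{\mathbb X}}$ satisfies the two naturality inequalities of Definition \ref{nat trans def}, namely $\sk\leq\delta(\sy_w(f),(\sy_m)_y,(\sy_m)_f)$ and $\sk\leq\delta((\sy_m)_x,\sy_v(f),(\sy_m)_f)$ for every $f:x\to y$ (where $(\sy_m)_f$ must first be defined, presumably by $(\sy_m)_f(b,e)=\delta(f,?,?)$ using $m$; the natural candidate is the distributor ${\mathbb X}(y,w)\rto{\mathbb X}(x,v)$, $(b,e)\mapsto\bv_{c:x\to w}\delta(f,b,c)\otimes\delta(c,m,e)$, or equivalently via the other transitivity expression), and (iii) that $\sy_{1_w}$ is the identity transformation of $\sy_w$, i.e. $(\sy_{1_w})_x(c,e)=\delta(c,1_w,e)=d(c,e)$, which is exactly $(1_{\sy_w})_x=\sy_w(1_x)$ as computed in the preceding proposition.

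For step (i), I would mimic the argument already given for $\sy_w(f)$: given $c,c':x\to w$ and $e:x\to v$, the distributor condition $d(c,c')\otimes(\sy_m)_x(c',e)\le(\sy_m)_x(c,e)$ — no wait, in the correct direction — follows from applying an associativity law to a suitable tetrahedron built from $1_x$, $c$, $m$ and using $\sk\le\delta(1_x,c,c')$; symmetrically one gets the condition on the $e$-variable using $\sk\le\delta(e,1_v,e)$ together with a right-identity tetrahedron. This is routine and parallels the displayed computation immediately above the proposition. For step (iii), the identity ${\mathbb X}(x,1_w)$-case is immediate from the left/right identity laws, exactly as in the proof that $\sy_w$ preserves identities, so only a one-line remark is needed.

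The substance is step (ii), the naturality. Here I expect to use transitivity in an essential way — this is why the hypothesis in this proposition is the full transitivity, not merely left-transitivity as in the previous proposition. The two required inequalities unwind, after taking infima over the relevant object/morphism arguments, to statements of the form ``$\sk$ is below a certain meet of values $\mu\multimap\nu$'', i.e. to inequalities $\mu\le\nu$ between composites of $\delta$'s and suprema of products of $\delta$'s. Concretely, the first naturality condition should reduce to showing that the composite distributor $(\sy_m)_y\circ\sy_w(f)$ (computed by $\bv_{b}\delta(f,b,c)\otimes\delta(b,m,e)$ over $b:y\to w$) coincides with, or is appropriately comparable to, $(\sy_m)_f$; and the second to comparing $\sy_v(f)\circ(\sy_m)_x$ (i.e. $\bv_{c}\delta(c,m,e)\otimes\delta(f,c,\,\cdot\,)$ — rather $\bv_c\delta(f,e',c')\cdots$) with $(\sy_m)_f$. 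Matching these two composites is precisely the content of the transitivity equation $\bv_{a}\delta(f,g,a)\otimes\delta(a,h,c)=\bv_{b}\delta(f,b,c)\otimes\delta(g,h,b)$ of Definition \ref{trans def}, applied with $g$ or $h$ replaced by $m$ in the appropriate slot; the remaining inequalities of the distributor structure come from the four associativity laws as in Proposition \ref{delta cont} and the proof just above. The main obstacle will be bookkeeping: choosing the correct tetrahedra $(f,g,h;a,b,c)$ whose left/right associativity laws yield each half of the comparison, and being careful about which variable the supremum ranges over so that the two transitivity inequalities combine to the equality of composite distributors. Once the definition of $(\sy_m)_f$ is pinned down so that it literally equals one side of the transitivity equation, both naturality inequalities become instances of ``$\sk\le v\multimap v$'', i.e. reflexivity, after the associativity-law estimates are in place.
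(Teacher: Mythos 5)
Your proposal is correct and follows essentially the same route as the paper: the paper likewise observes that, since ${\bf Dist}_{\sV}$ is (induced by) a separated $\sV$-metric category, the two naturality inequalities amount to the commutativity of the square with edges $\sy_w(f)$, $(\sy_m)_y$, $(\sy_m)_x$, $\sy_v(f)$ in ${\bf Dist}_{\sV}$, defines $(\sy_m)_f$ as the common diagonal composite, and notes that this commutativity is exactly the transitivity equation, with the identity case computed as $\delta(c,1_w,e)=d(c,e)=\sy_w(1_x)(c,e)$. Your only slip is the garbled bookkeeping for the second composite (it is $\sy_v(f)\circ(\sy_m)_y$, with the join over $e'\colon y\to v$), which you flag yourself and which does not affect the argument.
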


\begin{proof}
It suffices to show that, for every morphism $f:x\to y$, the naturality diagram
$$\bfig
\square(800,0)<700,300>[{\mathbb X}(y,w)`{\mathbb X}(x,w)`{\mathbb X}(y,v)`{\mathbb X}(x,v);(\sy_w)(f)`(\sy_m)_y`(\sy_m)_x`(\sy_v)(f)]
\place(800,150)[\text{-}]
\place(1500,150)[\text{-}]
\place(1150,0)[\shortmid]
\place(1150,300)[\shortmid]
 \efig$$
 commutes in $\bf Dist_{\sV}$,  since then one can set $(\sy_m)_f$ as the common composite given by the NW-SE diagonal of the diagram. But the commutativity of every such diagram amounts precisely to stating that $\mathbb X$ is transitive. For $m=1_w$ one has
 $$(\sy_{1_w})_x(c,e)=\delta(c,1_w,e)=d(c,e)=\delta(1_x,c,e)=\sy_w(1_x)(c,e)=(1_{\sy_w})_x(c,e).$$
\end{proof}

\begin{thm}\label{Yoneda contractor}
Let $\mathbb X$ be a transitive $\sV$-metagory. Then the {\em Yoneda $\sV$-contractor}
$$\sy:{\mathbb X}\to[{\mathbb X}^{\op},{\bf Dist}_{\sV}]$$
of $\mathbb X$ is an isometry, mapping $\mathbb X$ into a $\sV$-metric category. It maps objects injectively, and the same is true for morphisms if $\mathbb X$ is separated.
\end{thm}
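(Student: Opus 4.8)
The plan is to verify that $\sy$ is a well-defined $\sV$-contractor, then to establish the isometry equation, the injectivity claims being easy consequences. Most of the well-definedness is already in place: the two preceding propositions show that $\sy_w:\mathbb X^{\op}\to{\bf Dist}_{\sV}$ is a $\sV$-contractor for every object $w$ (using left transitivity), that $\sy_m:\sy_w\to\sy_v$ is a natural transformation for every $m:w\to v$ (using transitivity), and that $\sy_{1_w}=1_{\sy_w}$; and, as recorded at the end of the previous section, $[\mathbb X^{\op},{\bf Dist}_{\sV}]$ is induced by a separated $\sV$-metric category. So $\sy$ is a graph morphism preserving identities, and what remains is the isometry equation
$$\delta_{\mathbb X}(m,n,p)=\delta_{[\mathbb X^{\op},{\bf Dist}_{\sV}]}(\sy_m,\sy_n,\sy_p)$$
for all $m:w\to v$, $n:v\to u$, $p:w\to u$ in $\mathbb X$; its inequality ``$\leq$'' will simultaneously show that $\sy$ is a $\sV$-contractor. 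Unravelling the definition of $\delta_{[\mathbb X^{\op},{\bf Dist}_{\sV}]}$ (from before Theorem~\ref{main thm}) together with Proposition~\ref{metag prop} and the composition rule of ${\bf Dist}_{\sV}$, the right-hand side equals
$$\bigwedge_{x\in\ob\mathbb X}\,\bigwedge_{c:x\to w,\,g:x\to u}d_{\sV}\!\bigl(A_{x,c,g},\,\delta(c,p,g)\bigr),\qquad A_{x,c,g}:=\bigvee_{e:x\to v}\delta(c,m,e)\otimes\delta(e,n,g),$$
with $d_{\sV}(s,t)=(s\multimap t)\wedge(t\multimap s)$; here $A_{x,c,g}$ is the value at $(c,g)$ of the composite $\sV$-distributor $(\sy_n)_x\circ(\sy_m)_x$.

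For ``$\leq$'' I would fix $x,c,g$ and prove the two inequalities $\delta(m,n,p)\leq A_{x,c,g}\multimap\delta(c,p,g)$ and $\delta(m,n,p)\leq\delta(c,p,g)\multimap A_{x,c,g}$. Since $\otimes$ distributes over suprema, the first amounts to $\delta(m,n,p)\otimes\delta(c,m,e)\otimes\delta(e,n,g)\leq\delta(c,p,g)$ for all $e:x\to v$, which is the right associativity law of the tetrahedron $(c,m,n;e,p,g)$ (that is, Definition~\ref{metagory def} with $f,g,h,a,b,c$ instantiated as $c,m,n,e,p,g$). The second amounts to $\delta(m,n,p)\otimes\delta(c,p,g)\leq\bigvee_{e:x\to v}\delta(c,m,e)\otimes\delta(e,n,g)$, which is precisely the left transitivity of $\mathbb X$ for the morphisms $c,m,n,p,g$. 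Taking infima over $e$, then over $c,g$, then over $x$ gives ``$\leq$'', and hence also that $\sy$ is a $\sV$-contractor.

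For ``$\geq$'' I would restrict both infima to the single term $x=w$, $c=1_w$, $g=p$, reducing the claim to $d_{\sV}\!\bigl(A_{w,1_w,p},\,\delta(1_w,p,p)\bigr)\leq\delta(m,n,p)$. Now $\delta(1_w,p,p)=d(p,p)$ by Proposition~\ref{Vgraph prop}, and $\sk\leq d(p,p)$ by the self-distance axiom; and $A_{w,1_w,p}=\bigvee_{e:w\to v}d(m,e)\otimes\delta(e,n,p)$, where each summand satisfies $d(m,e)\otimes\delta(e,n,p)\leq\delta(m,n,p)$ by Proposition~\ref{delta cont} (or the first inequality in its proof), so $A_{w,1_w,p}\leq\delta(m,n,p)$ (in fact equality, as the term $e=m$ shows). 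Hence
$$d_{\sV}\!\bigl(A_{w,1_w,p},\,d(p,p)\bigr)\ \leq\ d(p,p)\multimap A_{w,1_w,p}\ \leq\ d(p,p)\multimap\delta(m,n,p)\ \leq\ \delta(m,n,p),$$
where the final step uses $\sk\leq d(p,p)$: if $z\otimes d(p,p)\leq\delta(m,n,p)$ then $z=z\otimes\sk\leq z\otimes d(p,p)\leq\delta(m,n,p)$. This finishes the proof that $\sy$ is an isometry into (a $\sV$-metagory induced by) a $\sV$-metric category.

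Finally, injectivity on objects: if $\sy_w=\sy_{w'}$, comparing the underlying object maps at $w$ gives $\mathbb X(w,w)=\mathbb X(w,w')$ as sets, and since $1_w$ lies in the former it lies in the latter, so $w=w'$ by disjointness of the hom-sets of $\mathbb X$. If $\mathbb X$ is separated and $\sy_m=\sy_{m'}$, then $m,m'$ are parallel by the object case, say $m,m':w\to v$, and evaluating $(\sy_m)_w=(\sy_{m'})_w$ at $(1_w,m)$ gives $d(m,m)=d(m',m)=d(m,m')$; since $\sk\leq d(m,m)$, separatedness forces $m=m'$. The main obstacle is the ``$\geq$'' direction: one must spot the right ``diagonal'' evaluation $(1_w,p)$, at which the composite distributor value collapses to $\delta(m,n,p)$, and then absorb the possibly non-trivial self-distance $d(p,p)$ using nothing more than $\sk\leq d(p,p)$; a lesser subtlety is that the second half of the ``$\leq$'' direction genuinely needs left \emph{transitivity}, not merely the associativity laws.
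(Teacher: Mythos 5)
Your proposal is correct and takes essentially the same route as the paper: the same unravelling of $\delta_{[{\mathbb X}^{\op},{\bf Dist}_{\sV}]}$ via composite distributors, the same two inequalities (right associativity plus left transitivity) giving the contractive direction, the same evaluation at $(x,c,j)=(w,1_w,p)$ for the reverse inequality, and the same injectivity arguments. The only, harmless, difference is that for the reverse inequality you bound the composite distributor using Proposition~\ref{delta cont} and then absorb $d(p,p)$ via $\sk\leq d(p,p)$, whereas the paper applies the left associativity law directly to the tetrahedron $(1_w,e,n;m,p,p)$.
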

\begin{proof}
Considering morphisms $c:x\to w,\,m:w\to v,\,n:v\to u, \,p:w\to u,\,j:x\to u$ in $\mathbb X$, from the right associativity law and the transformation of joins into meets by the maps $(\text{-})\multimap t:\sV\to \sV$ for every $t\in \sV$ we obtain
\begin{align*}
\delta(m,n,p)&\leq\bw_{e:x\rightarrow v}((\delta(e,n,j)\otimes\delta(c,m,e))\multimap\delta(c,p,j))\\
&\leq(\bv_{e:x\rightarrow v}(\sy_n)_x(e,j)\otimes(\sy_m)_x(c,e))\multimap(\sy_p)_x(c,j)\\&=((\sy_n)_x\circ(\sy_m)_x)(c,j)\multimap(\sy_p)_x(c,j),
\end{align*}
while the left transitivity law gives
$$\delta(m,n,p)\leq(\sy_p)_x(c,j)\multimap ((\sy_n)_x\circ(\sy_m)_x)(c,j).$$
Consequently,
$$\delta(m,n,p)\leq\bw_{x\in{\rm{ob}}{\mathbb X}}\bw_{c:x\rightarrow w,j:x\rightarrow u}d(((\sy_n)_x\circ(\sy_m)_x)(c,j),(\sy_p)_x(c,j))=d(\sy_n\cdot\sy_m,\sy_p),$$
which shows the $\sV$-contractivity of $\sy$. For the reverse inequality we note that, when considering $w=x, c=1_x, j=p$, since $\delta(1_x,p,p)\leq\sk$ and $\sk\multimap t=t$ for all $t\in\sV$, one obtains
\begin{align*}
d(\sy_n\circ\sy_m,\sy_p)&\leq d(((\sy_n)_x\circ(\sy_m)_x)(1_x,p),(\sy_p)_x(1_x,p))\\
&\leq\delta(1_x,p,p)\multimap(\bv_{e:x\rightarrow v}\delta(1_x,m,e)\otimes\delta(e,n,p))\\
&\leq\bv_{e:x\rightarrow v}\delta(1_x,e,m)\otimes\delta(e,n,p)\otimes\delta(1_x,p,p)\leq\delta(m,n,p);
\end{align*}
here the last inequality follows from the left associativity law applied, for every $e$, to the tetrahedron $(1_x,e,n;m,p,p)$.

The argument for $\sy$ mapping objects injectively is standard: from $\sy_w=\sy_v$ one obtains $1_w\in{\mathbb X}(w,w)=\sy_w(w)=\sy_v(w)={\mathbb X}(w,v)$, which is possible only if $w=v$. For $m,n:w\to v$, suppose now that $\sy_m=\sy_n$; then $(\sy_m)_w=(\sy_n)_w:{\mathbb X}(w,w)\rto{\mathbb X}(w,v)$. Consequently,
$$\delta(1_w,m,n)=(\sy_m)_w(1_w,n)=(\sy_n)_w(1_w,n)=\delta(1_w,n,n)\geq\sk,$$
which implies $m=n$ when $\mathbb X$ is separated.
\end{proof}

\begin{rem}
When $\mathbb X$ is actually a $\sV$-metric category, so that one has the usual Yoneda embedding $\tilde{\sy}:{\mathbb X}\to[{\mathbb X},{\bf Met}_{\sV}]$, one easily sees that the diagram
$$\bfig\Atriangle/<-`->`->/<500,300>[ {[{\mathbb X}^{\op},{\bf Met}_{\sV}]}`{\mathbb X}`{[{\mathbb X}^{\op},{\bf Dist}_{\sV}]};\tilde{\sy}``\sy]\efig$$
commutes; here, the unnamed functor is induced by $(\text{-})_*:{\bf Met}_{\sV}\to{\bf Dist}_{\sV}$ of Section 6. Consequently, $\tilde{\sy}$ is, like {\sy}, an isometry. But note that, unlike $\tilde{\sy}$, {\em the isometry $\sy$ fails to be full}, that is: in general, its hom-maps do not map surjectively.
\end{rem}

Let us return to one of our principal goals and exploit the Theorem for the adjunction described in Proposition \ref{topological}:

\begin{cor}\label{main cor}
The unit ${\mathbb X}\to{\sf Path}{\mathbb X}$ of the right-adjoint functor ${\bf Met}_{\sV}\text{-}\Cat\to{\bf Metag}_{\sV}$ at the $\sV$-metagory $\mathbb X$ is an isometry.
\end{cor}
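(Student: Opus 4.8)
The plan is to obtain this as a formal consequence of Theorem \ref{Yoneda contractor} together with the universal property of the free $\sV$-metric category ${\sf Path}{\mathbb X}$ furnished by Proposition \ref{topological}; note that $\mathbb X$ must here be transitive, a hypothesis that cannot be dropped: the unit $\eta_{\mathbb X}:{\mathbb X}\to{\sf Path}{\mathbb X}$ is always injective on objects and on morphisms, so were it an isometry it would embed $\mathbb X$ isometrically into a $\sV$-metric category, which is impossible in general (Remark \ref{trans remarks}(3)). Writing $U:{\bf Met}_{\sV}\text{-}\Cat\to{\bf Metag}_{\sV}$ for the induced $\sV$-area functor, $\eta_{\mathbb X}$ is the unit of ${\sf Path}\dashv U$ at $\mathbb X$ and is, by construction, a $\sV$-contractor; hence $\delta_{\mathbb X}(f,g,a)\leq\delta_{{\sf Path}{\mathbb X}}(\eta f,\eta g,\eta a)$ for all $f:x\to y$, $g:y\to z$, $a:x\to z$ in $\mathbb X$, and it remains only to prove the reverse inequality, which is exactly what is needed for $\eta_{\mathbb X}$ to be isometric in the sense of Definition \ref{metagory def} (and, a fortiori, an isometry on each hom-set).

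For the reverse inequality I would feed the Yoneda $\sV$-contractor into the universal property of $\eta_{\mathbb X}$. By Theorem \ref{Yoneda contractor}, $\sy:{\mathbb X}\to{\mathbb C}$, with ${\mathbb C}:=[{\mathbb X}^{\op},{\bf Dist}_{\sV}]$, is an isometric $\sV$-contractor, and by Remark \ref{sep remark} (applied to the $\sV$-metric category ${\bf Dist}_{\sV}$ of Proposition \ref{dist}) the metagory $\mathbb C$ is induced by a $\sV$-metric category. Thus $\sy$ is a $\sV$-contractor from $\mathbb X$ into $U{\mathbb C}$, so the universal property of $\eta_{\mathbb X}$ yields a unique $\sV$-contractive functor $\bar{\sy}:{\sf Path}{\mathbb X}\to{\mathbb C}$ with $U\bar{\sy}\circ\eta_{\mathbb X}=\sy$; in particular $\bar{\sy}(\eta f)=\sy f$ for every morphism $f$ of $\mathbb X$. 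Since $U$ sends $\sV$-contractive functors to $\sV$-contractors (Proposition \ref{metag prop}), $U\bar{\sy}$ contracts areas, and therefore
$$\delta_{{\sf Path}{\mathbb X}}(\eta f,\eta g,\eta a)\ \leq\ \delta_{\mathbb C}\bigl(\bar{\sy}(\eta f),\bar{\sy}(\eta g),\bar{\sy}(\eta a)\bigr)\ =\ \delta_{\mathbb C}(\sy f,\sy g,\sy a)\ =\ \delta_{\mathbb X}(f,g,a),$$
the final equality being the isometry clause of Theorem \ref{Yoneda contractor}; combined with the easy inequality this gives equality.

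Once Theorem \ref{Yoneda contractor} is in hand the argument is essentially formal, so I do not expect a genuine obstacle; the only delicate point is the bookkeeping of the two kinds of morphism — that $\eta_{\mathbb X}$ is a $\sV$-contractor, that $\bar{\sy}$ is a $\sV$-contractive \emph{functor}, and that $U$ turns the latter into a $\sV$-contractor — which is what licenses chaining ``$\eta_{\mathbb X}$ contracts areas'', ``$U\bar{\sy}$ contracts areas'' and ``$\sy$ preserves areas''. A minor set-theoretic nuisance is that $[{\mathbb X}^{\op},{\bf Dist}_{\sV}]$ need not be small even when $\mathbb X$ is; this is harmless, since one may first pass to the full $\sV$-metric subcategory of $\mathbb C$ spanned by the objects in the image of $\sy$, which is small, before invoking the universal property of $\eta_{\mathbb X}$.
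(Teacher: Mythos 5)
Your argument is exactly the paper's: the Yoneda isometry $\sy$ of Theorem \ref{Yoneda contractor} factors through the unit $\eta_{\mathbb X}$ by the universal property from Proposition \ref{topological}, and since both $\eta_{\mathbb X}$ and the induced comparison contract areas while the composite preserves them, $\eta_{\mathbb X}$ must be an isometry. Your additional bookkeeping (transitivity being the implicit hypothesis, and cutting $[{\mathbb X}^{\op},{\bf Dist}_{\sV}]$ down to the small full subcategory spanned by the image of $\sy$ before invoking the universal property) is correct and merely makes explicit what the paper leaves tacit.
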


\begin{proof}
The isometry $\sy$ factors uniquely through the unit, which therefore must be an isometry as well:
$$\bfig\Atriangle/<-`->`->/<500,300>[{\sf Path}{\mathbb X}`{\mathbb X}`{[{\mathbb X}^{\op},{\bf Dist}_{\sV}].};``\sy]\efig$$
\end{proof}
\begin{exmp}(See \cite{AlioucheSimpson2017}, Section 11.)
As in Example \ref{2Met exmp}(1), consider ${\mathbb R}^n$ as a G\"{a}hler- 2-metric space  \cite{Gahler1963}. Via Example \ref{chaotic metag}, ${\mathbb R}^n$ becomes an ${\mathbb R}_+$-metagory, for which one can form the ${\mathbb R}_+$-metric category ${\sf Path}{\mathbb R}^n$. Its morphisms are polygonal paths in ${\mathbb R}^n$ and, as described in fair detail in \cite{AlioucheSimpson2017}, the
 distance of two morphisms $f, g$ in the same hom-set is computed as the infimum of the areas of all ``triangulated zero-volume bodies" that have the closed path following first $f$, and then $g$ in reverse direction, as their boundary.
\end{exmp}

\section{Sufficient conditions for transitivity}
Let us refine the claim made in Remark \ref{trans remarks}(1) and give a chain of increasingly weakening conditions for transitivity. We use $<\!\!<$ to denote the {\em totally below} relation in the complete lattice $\sV$, that is:
$$u<\!\!<v:\Longleftrightarrow\forall A\subseteq \sV\,(v\leq\bv A\Longrightarrow\exists a\in A\,(u\leq a)).$$
We will use this relation only for $v=\sk$ the $\otimes$-neutral element of $\sV$, and we then use $\varepsilon$ instead of $u$ since $\varepsilon<\!\!<\sk$ amounts to $\varepsilon>0$ in case $\sV={\mathbb R}_+$. Note that, for the bottom element $\bot$ in $\sV$, one has $\bot<\!\!<\sk$ precisely when $\bot<\sk$, that is, when $\sV$ is not the trivial quantale $\sf 1$.
\begin{prop}\label{trans prop}
For a $\sV$-metagory $\mathbb X$, consider the following conditions:
\begin{itemize}
\item[{\em (i)}] $\mathbb X$ is (induced by) a $\sV$-metric category;
\item[{\em (ii)}] for all $f:x\rightarrow y, \,g:y\rightarrow z$ in $\mathbb X$, there is $a:x\rightarrow z$ in $\mathbb X$ with $\sk\leq\delta(f,g,a)$;
\item[{\em (iii)}] for all $f:x\rightarrow y,\, g:y\rightarrow z$ in $\mathbb X$,  $\sk\leq\bv_{a:x\rightarrow z}\delta(f,g,a)\otimes\delta(f,g,a)$;
\item[{\em (iv)}] $\mathbb X$ is transitive.
\end{itemize}
Then $\rm{(i)}\Longrightarrow\rm{(ii)}\Lra\rm{(iii)}\Lra\rm{(iv)}$, and $\rm{(i)}$ and $\rm{(ii)}$ are equivalent when $\mathbb X$ is separated. Furthermore, if $\sV$ satisfies $\bv\{\varepsilon\,|\,\varepsilon<\!\!<\sk\}=\sk$, then {\em (iii)} is equivalent to
\begin{itemize}
\item[{\em(iii$'$)}] for all $\varepsilon\!<\!\!<\!\sk$ in $\sV$, $f\!:\!x\rightarrow y, g\!:\!y\rightarrow z$ in $\mathbb X$, there is $a\!:\!x\rightarrow z$ in $\mathbb X$ with $\varepsilon\!\leq\!\delta(f,g,a)\otimes\delta(f,g,a)$.
\end{itemize}
\end{prop}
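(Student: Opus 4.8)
The plan is to establish the implications in the order $(\mathrm{i})\Rightarrow(\mathrm{ii})\Rightarrow(\mathrm{iii})\Rightarrow(\mathrm{iv})$, then the partial converse $(\mathrm{ii})\Rightarrow(\mathrm{i})$ under separatedness, and finally the equivalence $(\mathrm{iii})\Leftrightarrow(\mathrm{iii}')$ under the stated join condition on $\sV$. For $(\mathrm{i})\Rightarrow(\mathrm{ii})$, if $\mathbb X$ comes from a $\sV$-metric category, then for $f:x\to y$ and $g:y\to z$ take $a:=g\cdot f$; by definition of the induced area function, $\delta(f,g,g\cdot f)=d(g\cdot f,g\cdot f)\geq\sk$. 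For $(\mathrm{ii})\Rightarrow(\mathrm{iii})$: given the $a$ from (ii), we have $\delta(f,g,a)\otimes\delta(f,g,a)\geq\sk\otimes\sk=\sk$, so a fortiori the supremum over all such $a$ is $\geq\sk$.

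The step $(\mathrm{iii})\Rightarrow(\mathrm{iv})$ is the one I expect to carry the most weight, so I would do it carefully. I need both left and right transitivity. For right transitivity, fix $f:x\to y,\,g:y\to z,\,h:z\to w,\,c:x\to w$ and $a:x\to z$; I want $\delta(f,g,a)\otimes\delta(a,h,c)\leq\bv_{b:y\to w}\delta(f,b,c)\otimes\delta(g,h,b)$. The trick is to insert a resolution of $\sk$: by (iii) applied to $g,h$ there exist (in the sup sense) morphisms $b:y\to w$ with $\delta(g,h,b)\otimes\delta(g,h,b)$ reaching up to $\sk$. Then, using $\sk\otimes(\delta(f,g,a)\otimes\delta(a,h,c))=\delta(f,g,a)\otimes\delta(a,h,c)$ and the left associativity law applied to an appropriate tetrahedron on $(f,g,h;a,b,c)$ — namely $\delta(f,g,a)\otimes\delta(g,h,b)\otimes\delta(f,b,c)\leq\delta(a,h,c)$, combined with a second copy of $\delta(g,h,b)$ to absorb the remaining factor via the right associativity law — I can bound $\delta(f,g,a)\otimes\delta(a,h,c)$ above by a join over $b$ of $\delta(f,b,c)\otimes\delta(g,h,b)$. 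The distributivity of $\otimes$ over joins is essential here, to pull the $\bv_b$ outside. Left transitivity is symmetric, using (iii) applied to $f,g$ and swapping the roles of the two associativity laws.

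For $(\mathrm{ii})\Rightarrow(\mathrm{i})$ when $\mathbb X$ is separated: given $f,g$, condition (ii) yields $a$ with $\sk\leq\delta(f,g,a)$; if $a'$ also satisfies $\sk\leq\delta(f,g,a')$, then by Proposition~\ref{delta cont} (with $f'=f,g'=g$) we get $d(a,a')\otimes\delta(f,g,a)\leq\delta(f,g,a')$ is not quite what I want, but applying the left associativity law to the tetrahedron $(1_x,f,g;a,f,\,\cdot\,)$ together with $\sk\leq\delta(f,g,a),\ \sk\leq\delta(f,g,a')$ forces $\sk\leq d(a,a')$, hence $a=a'$ by separatedness. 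So the candidate composite is unique; setting $g\cdot f:=a$ defines an honest composition, and one checks associativity and unit laws from the metagory axioms, recovering a $\sV$-metric category whose induced area function is the original $\delta$ (using that $\delta(f,g,a)=\sk$ exactly when $a=g\cdot f$, while $\delta(f,g,a')=d(g\cdot f,a')$ in general follows from Proposition~\ref{delta cont}).

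Finally, for $(\mathrm{iii})\Leftrightarrow(\mathrm{iii}')$ under $\bv\{\varepsilon\mid\varepsilon<\!\!<\sk\}=\sk$: assuming (iii$'$), for each $\varepsilon<\!\!<\sk$ pick $a_\varepsilon$ with $\varepsilon\leq\delta(f,g,a_\varepsilon)\otimes\delta(f,g,a_\varepsilon)$; then $\bv_{a}\delta(f,g,a)\otimes\delta(f,g,a)\geq\bv_{\varepsilon<\!\!<\sk}\varepsilon=\sk$, giving (iii). Conversely, assuming (iii), write $\sk\leq\bv_{a}\delta(f,g,a)\otimes\delta(f,g,a)$; given $\varepsilon<\!\!<\sk$, by the very definition of the totally-below relation there is some single $a$ in the index set with $\varepsilon\leq\delta(f,g,a)\otimes\delta(f,g,a)$, which is (iii$'$). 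The only subtlety is that the hypothesis $\bv\{\varepsilon\mid\varepsilon<\!\!<\sk\}=\sk$ is needed precisely for the "(iii$'$)$\Rightarrow$(iii)" direction, and should be invoked there explicitly.
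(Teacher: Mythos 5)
Your argument is correct and essentially the paper's own: $a=g\cdot f$ for (i)$\Rightarrow$(ii), insertion of a second copy of $\delta(f,g,a)$ (resp.\ $\delta(g,h,b)$) into the corresponding associativity inequality followed by taking joins and using distributivity of $\otimes$ for (iii)$\Rightarrow$(iv), uniqueness of the candidate composite via an associativity law and separatedness for (ii)$\Rightarrow$(i), and the definition of the totally-below relation together with the join hypothesis (needed, as you say, only for (iii$'$)$\Rightarrow$(iii)) for the final equivalence. Only minor bookkeeping needs repair: the tetrahedron you cite for uniqueness, $(1_x,f,g;a,f,\cdot)$, does not typecheck --- use $(f,g,1_z;a,g,a')$ with the \emph{left} associativity law (the paper's choice) or $(1_x,f,g;f,a,a')$ with the right law to get $\sk\leq d(a,a')$ --- and in the right-transitivity step it is the \emph{right} associativity law alone, multiplied by the extra $\delta(g,h,b)$, that does the work, the left law entering only in the symmetric left-transitivity case, as your closing remark in effect acknowledges; likewise $\delta(f,g,a')=d(g\cdot f,a')$ requires an associativity inequality for one of the two directions, Proposition~\ref{delta cont} supplying only the other.
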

\begin{proof}
 (i)$\Lra$(ii): Consider $a=g\cdot f$.  (ii)$\Lra$(iii): Trivially, from $\sk\leq\delta(f,g,a)$ one obtains
 $$\sk=\sk\otimes\sk\leq\delta(f,g,a)\otimes\delta(f,g,a)\leq \bv_{a':x\rightarrow z}\delta(f,g,a')\otimes\delta(f,g,a').$$ 
 (iii)$\Lra$(iv): Given $f,g,h,b,c$ as in Definition \ref{trans def}, for all $a:x\rightarrow y$ the left associativity law implies $\delta(f,g,a)\otimes\delta(f,g,a)\otimes\delta(g,h,b)\otimes\delta(f,b,c)\leq\delta(f,g,a)\otimes\delta(a,h,c)$. Taking joins on both sides, with (iii) this gives the left transitivity law, with the right transitivity law following similarly:
 $$\delta(g,h,b)\otimes\delta(f,b,c)=\sk\otimes \delta(g,h,b)\otimes\delta(f,b,c)\leq\bv_{a:x\rightarrow z}\delta(f,g,a)\otimes\delta(a,h,c).$$
(ii)$\Lra$(i): When $\mathbb X$ is separated, given $f:x\rightarrow y, g:y\rightarrow z$, there can only be one $a:x\rightarrow z$ with $\sk\leq\delta(f,g,a)$: if there is also $a'$, consider the tetrahedron $(f,g,1_z;a,g,a')$ to obtain $\sk\leq\delta(a,1_z,a')$, whence $a=a'$; then, with $g\cdot f:=a$, it is also routine to show that the $\sV$-metric $d$ induced by $\delta$ satisfies the condition of Definition \ref{metric cat}. (iii)$\Lra$(iii$')$ follows from the definition of $<\!\!<$, and with the hypothesis $\bv\{\varepsilon\,|\,\varepsilon<\!\!<\sk\}=\sk$, (iii$')\Lra$(iii) follows trivially. 
  \end{proof}
The Proposition suggests the study of those of $\sV$-metagories that allow for ``composition up to $\varepsilon$". Hence, adapting the terminology used in \cite{AlioucheSimpson2017}, we define:  
\begin{defn}
 For $\varepsilon\in\sV$, a $\sV$-metagory $\mathbb X$ is $\varepsilon$-{\em categorical} if for all 
 $f:x\rightarrow y, \,g:y\rightarrow z$ in $\mathbb X$, there is $a:x\rightarrow z$ in $\mathbb X$ with $\varepsilon\leq\delta(f,g,a)$. We denote the corresponding full subcategory of ${\bf Metag}_{\sV}$ by ${\varepsilon}\text{-}{\bf Metag}_{\sV}$. 
 We let $\bf{TransMetag}_{\sV}$ denote the full subcategory of transitive $\sV$-metagories.
 \end{defn}
\begin{rem}
(1) By Proposition \ref{trans prop}, the separated $\sk$-categorical $\sV$-metagories are precisely those that are induced by separated $\sV$-metric categories.

(2) The $\bot$-categorical metagories $\mathbb X$ are those satisfying the condition $${\mathbb X}(x,y)\neq\emptyset,\;{\mathbb X}(y,z)\neq\emptyset\;\Lra\;{\mathbb X}(x,z)\neq\emptyset$$
for all $x,y,z\in{\rm{ob}}{\mathbb X}$, which is used as the general hypothesis (7.1) in \cite{AlioucheSimpson2017}.
\end{rem}

With a slight adaptation of the argumentation given in Proposition \ref{trans prop}, one obtains the following inclusions between the three subcategories of ${\bf Metag}_{\sV}$ that seem to deserve further study:
\begin{thm}
Assume that the quantale $\sV$ satisfies $\bv\{\varepsilon\otimes\varepsilon\,|\,\varepsilon<\!\!<\sk\}=\sk.$ Then
$$\sk\text{-}{\bf Metag}_{\sV}\;\subseteq\;\bigcap_{\varepsilon<\!\!<\sk}\varepsilon\text{-}{\bf Metag}_{\sV}\;\subseteq\;{\bf TransMetag}_{\sV}.$$
For $\sV$-metagories $\mathbb X$ and $\mathbb Y$, if $\mathbb Y$ is $\sk$-categorical, so is $[{\mathbb X}, {\mathbb Y}]$. If both $\mathbb X$ and $\mathbb Y$ are $\sk$-categorical or transitive, ${\mathbb X}\otimes{\mathbb Y}$ has the respective property; likewise for the property of being $\varepsilon$-categorical for all $\varepsilon<\!\!<\sk$.
\end{thm}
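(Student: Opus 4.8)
The plan is to verify the four assertions in turn, each by reducing to a pointwise computation in $\sV$ together with the lattice hypothesis $\bv\{\varepsilon\otimes\varepsilon\mid\varepsilon<\!\!<\sk\}=\sk$. First, the chain of inclusions $\sk\text{-}{\bf Metag}_{\sV}\subseteq\bigcap_{\varepsilon<\!\!<\sk}\varepsilon\text{-}{\bf Metag}_{\sV}\subseteq{\bf TransMetag}_{\sV}$: the left inclusion is immediate, since $\varepsilon<\!\!<\sk$ implies $\varepsilon\leq\sk$, so any $a$ realizing $\sk\leq\delta(f,g,a)$ also realizes $\varepsilon\leq\delta(f,g,a)$. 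For the right inclusion I would argue that if $\mathbb X\in\bigcap_{\varepsilon<\!\!<\sk}\varepsilon\text{-}{\bf Metag}_{\sV}$ then $\mathbb X$ satisfies condition (iii) of Proposition \ref{trans prop}: for fixed $f,g$, for each $\varepsilon<\!\!<\sk$ pick $a_\varepsilon$ with $\varepsilon\leq\delta(f,g,a_\varepsilon)$, whence $\varepsilon\otimes\varepsilon\leq\delta(f,g,a_\varepsilon)\otimes\delta(f,g,a_\varepsilon)\leq\bv_{a}\delta(f,g,a)\otimes\delta(f,g,a)$; taking the join over all $\varepsilon<\!\!<\sk$ and using the hypothesis gives $\sk\leq\bv_a\delta(f,g,a)\otimes\delta(f,g,a)$, i.e.\ (iii), and then Proposition \ref{trans prop} yields transitivity.

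Next, the hom-metagory claim: if $\mathbb Y$ is $\sk$-categorical, then so is $[\mathbb X,\mathbb Y]$. Given $\sV$-contractors $F,G,H:\mathbb X\to\mathbb Y$ and natural transformations $\alpha:F\to G$, $\beta:G\to H$, I must produce $\gamma:F\to H$ with $\sk\leq\delta_{[\mathbb X,\mathbb Y]}(\alpha,\beta,\gamma)=\bw_{x}\delta_{\mathbb Y}((\alpha)_x,(\beta)_x,(\gamma)_x)$. The natural candidate is to define, for each morphism $f:x\to y$ in $\mathbb X$, the component $\gamma_f$ to be a composite $(\beta)_y\cdot(\alpha)_f$ (equivalently $(\beta)_f\cdot(\alpha)_x$) taken in the underlying category of $\mathbb Y$—which exists because $\mathbb Y$ being $\sk$-categorical lets us pick, for consecutive arrows, a composite with area $\geq\sk$. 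The two things to check are (a) this $\gamma$ is a well-defined natural transformation $F\to H$, which follows by applying the associativity laws of $\mathbb Y$ to the tetrahedra built from $(Ff,(\alpha)_y,(\beta)_z;(\alpha)_f,(\beta)_g,\gamma_{?})$ and the defining inequalities $\sk\leq\delta(Ff,(\alpha)_y,(\alpha)_f)$, $\sk\leq\delta((\alpha)_y,(\beta)_g,\gamma_g)$ etc.; and (b) that $\delta_{\mathbb Y}((\alpha)_x,(\beta)_x,(\gamma)_x)\geq\sk$ for every object $x$, which is exactly the defining property of $\gamma_x=\gamma_{1_x}$ as the chosen composite. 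Since this holds for all $x$, the infimum is $\geq\sk$, giving $\sk$-categoricity of $[\mathbb X,\mathbb Y]$.

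Finally, the tensor-product claims: if $\mathbb X,\mathbb Y$ are both $\sk$-categorical (resp.\ transitive, resp.\ $\varepsilon$-categorical for all $\varepsilon<\!\!<\sk$), so is $\mathbb X\otimes\mathbb Y$. These all follow componentwise from the product formula $\delta_{\mathbb X\otimes\mathbb Y}((f,h),(g,j),(a,b))=\delta_{\mathbb X}(f,g,a)\otimes\delta_{\mathbb Y}(h,j,b)$. For $\sk$-categoricity: given $(f,h)$ and $(g,j)$, pick $a$ with $\sk\leq\delta_{\mathbb X}(f,g,a)$ and $b$ with $\sk\leq\delta_{\mathbb Y}(h,j,b)$; then $\sk=\sk\otimes\sk\leq\delta_{\mathbb X\otimes\mathbb Y}((f,h),(g,j),(a,b))$. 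For $\varepsilon$-categoricity one argues identically, replacing $\sk$ by $\varepsilon$ on the $\mathbb X$-side and $\mathbb Y$-side and using $\varepsilon\leq\varepsilon\otimes\varepsilon$ is \emph{not} available, so instead one should pick the $\mathbb X$-witness and $\mathbb Y$-witness each at level $\varepsilon$ and note $\varepsilon\otimes\varepsilon\leq\delta_{\mathbb X\otimes\mathbb Y}(\cdots)$—this lands at level $\varepsilon\otimes\varepsilon$ rather than $\varepsilon$; hence the cleanest route is to go through condition (iii) / (iii$'$) of Proposition \ref{trans prop} and the hypothesis $\bv\{\varepsilon\otimes\varepsilon\mid\varepsilon<\!\!<\sk\}=\sk$ again, or simply observe that ``$\varepsilon$-categorical for all $\varepsilon<\!\!<\sk$'' is stable under the $\varepsilon\mapsto\varepsilon\otimes\varepsilon$ cofinality afforded by that hypothesis. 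For transitivity of $\mathbb X\otimes\mathbb Y$, I would expand both sides of the transitivity equation in Definition \ref{trans def} using the product formula; the join over $(a,b):(x,y)\to(z,w)$ in the product factors (because $\otimes$ distributes over joins) into the $\otimes$ of the corresponding joins in $\mathbb X$ and $\mathbb Y$ separately, and then transitivity of each factor finishes it. The main obstacle is the bookkeeping in the hom-metagory step (b)—specifically checking naturality of the constructed $\gamma$ and the coherence of the two possible defining composites $(\beta)_y\cdot(\alpha)_f$ versus $(\beta)_f\cdot(\alpha)_x$—which requires a careful pass through the associativity laws for $\mathbb Y$; everything else is a short diagram-free computation.
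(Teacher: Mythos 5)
Your plan is essentially sound, and for the inclusion chain and the tensor-product claims it runs along the paper's own lines: the paper also proves the second inclusion by the tetrahedron estimate behind (iii)$\Rightarrow$(iv) of Proposition \ref{trans prop} (your packaging via condition (iii) itself, using $\bv\{\varepsilon\otimes\varepsilon\mid\varepsilon<\!\!<\sk\}=\sk$, is an equivalent and slightly tidier route), and the $\sk$-categoricity and transitivity of $\mathbb X\otimes\mathbb Y$ are exactly the componentwise computations you describe. One caution on the $\varepsilon$-categoricity of $\mathbb X\otimes\mathbb Y$: of your two suggested repairs, only the second is adequate --- detouring through (iii)/(iii$'$) would deliver transitivity, not $\varepsilon$-categoricity. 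The correct argument, and the one the paper uses, is that the hypothesis gives the ``halving property'': for each $\varepsilon<\!\!<\sk$ there is $\eta<\!\!<\sk$ with $\varepsilon\leq\eta\otimes\eta$, and one then picks witnesses at level $\eta$ in each factor, so that $\varepsilon\leq\eta\otimes\eta\leq\delta_{\mathbb X}(f,g,a)\otimes\delta_{\mathbb Y}(f',g',a')$.

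In the hom-metagory step your construction genuinely differs from the paper's, and your wording is loose: there is no ``underlying category of $\mathbb Y$'' and no uniqueness of composites, so $\gamma_f$ must be \emph{chosen} (via a choice principle) as some witness of $\sk\leq\delta(\alpha_f,\beta_y,\gamma_f)$, and your two ``equivalent'' candidates ($\beta_y$ after $\alpha_f$, versus $\beta_f$ after $\alpha_x$) are different choices which need not coincide --- though either one can be completed. With your choice the verification does go through, and with about the same effort as in the paper: the defining inequality together with $\sk\leq\delta(Ff,\alpha_y,\alpha_f)$ and $\sk\leq\delta(\alpha_y,\beta_y,\gamma_y)$ gives $\sk\leq\delta(Ff,\gamma_y,\gamma_f)$ by one application of right associativity to the tetrahedron $(Ff,\alpha_y,\beta_y;\alpha_f,\gamma_y,\gamma_f)$, while $\sk\leq\delta(\gamma_x,Hf,\gamma_f)$ follows from right associativity at $(\alpha_x,Gf,\beta_y;\alpha_f,\beta_f,\gamma_f)$ (yielding $\sk\leq\delta(\alpha_x,\beta_f,\gamma_f)$) followed by left associativity at $(\alpha_x,\beta_x,Hf;\gamma_x,\beta_f,\gamma_f)$; your sketch of these tetrahedra has misplaced indices and is only asserted, so this is the part you must actually write out. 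The paper instead chooses $\gamma_x$ with $\sk\leq\delta(\alpha_x,\beta_x,\gamma_x)$ and then $\gamma_f$ with $\sk\leq\delta(\gamma_x,Hf,\gamma_f)$, so that one naturality condition holds by construction and the other is derived from a chain of three tetrahedra; both routes are correct, and neither needs the halving property for this step.
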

\begin{proof}
The first inclusion is trivial since $\varepsilon<\!\!<\sk$ implies $\varepsilon\leq\sk$. For the second inclusion, one argues similarly as for (iii)$\Lra$(iv) of Proposition \ref{trans prop}. Indeed, for a $\sV$-metagory that is $\varepsilon$-categorical for all $\varepsilon<\!\!<\sk$, given $f,g,h,b,c$, one has $a_{\varepsilon}$ with $a_\varepsilon\leq\delta(f,g,a_{\varepsilon})$. With the left associativity law applied to the tetrahedron $(f,g,h,a_{\varepsilon},b,c)$ one obtains 
$\varepsilon\otimes\varepsilon\otimes\delta(g,h,b)\otimes\delta(f,b,c)\leq\delta(f,g,a_{\varepsilon})\otimes\delta(a_{\varepsilon},h,c).$ Taking joins on both sides we obtain the left transitivity law.

To show that the property of being $\sk$-categorical or transitive gets transferred from $\mathbb{X,Y}$ to ${\mathbb X}\otimes{\mathbb Y}$ is straightforward. To show the same for being $\varepsilon$-categorical for all $\varepsilon<\!\!<\sk$, first observe that, by hypothesis, for every such $\varepsilon$ we have some  $\eta<\!\!<\sk$ with $\varepsilon\leq\eta\otimes\eta$ 
(that is: $\sV$ has the {\em halving property}). Then, given $(f,f'):(x,x')\to(y,y'),\,(g,g'):(y,y')\to(z,z')$ in ${\mathbb X}\otimes{\mathbb Y}$, exploiting the hypothesis on $\mathbb{X,Y}$ at $\eta$ rather than $\varepsilon$, we can find $a:x\to z$ in $\mathbb X$, $a':x'\to z'$ in $\mathbb Y$ satisfying
$$\varepsilon\leq\eta\otimes\eta\leq\delta_{\mathbb X}(f,g,a)\otimes\delta_{\mathbb Y}(f',g',a')=\delta_{{\mathbb X}\otimes{\mathbb Y}}((f,f'),(g,g'),(a,a')).$$

The more surprising fact is that being $\sk$-categorical gets inherited by $[{\mathbb X},{\mathbb Y}]$ from $\mathbb Y$ (for which one actually does not need the halving property), as we show now. Given natural transformations $\alpha:F\to G,\,\beta:G\to H$, it is clear that, assuming a choice principle, for all $x\in{\rm{ob}}{\mathbb X}$ one can first find $\gamma_x:Fx\to Hx$ in $\mathbb Y$ with $\sk\leq\delta(\alpha_x,\beta_x,\gamma_x)$. Next, for every morphism $f:x\to y$ in $\mathbb X$ that is {\em not} an identity morphism, we can then choose $\gamma_f: Fx\to Hy$ in $\mathbb Y$ with $\sk\leq\delta(\gamma_x,Hf,\gamma_f)$; for $f=1_x$, with the convention of writing $\gamma_x$ for $\gamma_{1_x}$, this last inequality holds trivially. In this way $\gamma=(\gamma_f)_{f:x\rightarrow y}$ satisfies already one of the two required conditions to qualify as a natural transformation $F\to H$; once we have verified the other, the proof is complete since, by choice of $\gamma_x$, one trivially has $\sk\leq\delta(\alpha,\beta,\gamma)$. So, we are left with having to show $\sk\leq\delta(Ff,\gamma_y,\gamma_f)$, for all $f$.

To this end we observe first that the right associativity applied to $(\alpha_x,\beta_x,Hf;\gamma_x,\beta_f,\gamma_f)$ gives
$$\sk=\sk\otimes\sk\otimes\sk\leq\delta(\alpha_x,\beta_x,\gamma_x)\otimes\delta(\beta_x,Hf,\beta_f)\otimes\delta(\gamma_x,Hf,\gamma_f)\leq\delta(\alpha_x,\beta_f,\gamma_f).   $$
Next, exploiting the left associativity at the tetrahedron $(\alpha_x, Gf, \beta_y;\alpha_y,\beta_f,\gamma_f)$, we obtain
$$\delta(\alpha_x,\beta_f,\gamma_f)=\sk\otimes\sk\otimes\delta(\alpha_x,\beta_f,\gamma_f)\leq\delta(\alpha_x,Gf,\alpha_f)\otimes\delta(Gf,\beta_y,\beta_f)\otimes\delta(\alpha_x,\beta_f,\gamma_f)\leq\delta(\alpha_f,\beta_y,\gamma_f).$$
Finally, with the right associativity law applied to $(Ff,\alpha_y,\beta_y;\alpha_f,\gamma_y,\gamma_f)$ we have
$$\delta(\alpha_f,\beta_y,\gamma_f)=\sk\otimes\sk\otimes\delta(\alpha_f,\beta_y,\gamma_f)\leq\delta(Ff,\alpha_y,\alpha_f)\otimes\delta(\alpha_y,\beta_y,\gamma_y)\otimes\delta(\alpha_f,\beta_y,\gamma_f)\leq\delta(Ff,\gamma_y,\gamma_f),$$
as desired.
\end{proof}

\begin{rem}
Recall that the complete lattice $\sV$ is {\em (constructively) completely distributive} \cite{Wood2004, MonTop} if, for all $v\in\sV$, one has $\bv\{u\,|\,u<\!\!<v\}=v$. In this case, and in particular when $\sV$ is completely distributive in the classical sense \cite{Raney1960}, one trivially has $\bv\{\varepsilon\,|\,\varepsilon<\!\!<\sk\}=\sk.$ Flagg \cite{Flagg1997} gives sufficient conditions for the marginally stronger condition $\bv\{\varepsilon\otimes\varepsilon\,|\,\varepsilon<\!\!<\sk\}=\sk$ to hold; all quantales mentioned in Remark \ref{quantales} trivially satisfy (what we have termed above) the halving property.

As encountered elsewhere (see in particular \cite{Flagg1997, LaiTholen2017}), it is likely that further studies on $\varepsilon$-categorical $\sV$-metagories will require an in-depth analysis of additional properties of the quantale $\sV$, such as the complete distributivity of its underlying lattice.

\end{rem}

\section{Some remarks on 2-categorical metric approximation}

Considering that the category ${\bf Met}_{\sV}\text{-}\Cat$ is actually a 2-category, with the 2-cells given by natural transformations of $\sV$-contractive functors, it is natural to ask whether also ${\bf Metag}_{\sV}$ may be considered as a 2-category, with the 2-cells given by natural transformations of $\sV$-contractors. But clearly, the vertical composition of 2-cells in
${\bf Met}_{\sV}\text{-}\Cat$ takes advantage of the composition in the target category, which is not availabe for $\sV$-metagories. We therefore consider the following property of ${\bf Met}_{\sV}\text{-}\Cat$ (which is actually stronger than the property of being a 2-category),
as a more promising starting point for a potential generalization to $\sV$-metagories:

\begin{prop}\label{self-enriched}
${\bf Met}_{\sV}\text{-}\Cat$ is self-enriched, that is: it is enriched in the monoidal-closed category ${\bf Met}_{\sV}\text{-}\Cat$.
\end{prop}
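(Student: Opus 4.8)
The plan is to exhibit the enrichment data explicitly and then observe that all the coherence axioms are inherited from the $2$-category ${\bf Cat}$, so that the only computation of substance is the $\sV$-contractivity of the composition morphism. \emph{Hom-objects.} By Remark \ref{VMetCat}, ${\bf Met}_{\sV}\text{-}\Cat$ is already symmetric monoidal closed, and $[{\mathbb X},{\mathbb Y}]$ is a $\sV$-metric category whose objects are the $\sV$-contractive functors ${\mathbb X}\to{\mathbb Y}$, whose morphisms are their natural transformations, and whose hom-$\sV$-metric is $d_{[{\mathbb X},{\mathbb Y}]}(\alpha,\alpha')=\bw_{x\in{\rm ob}{\mathbb X}}d_{\mathbb Y}(\alpha_x,\alpha'_x)$; when ${\mathbb X}$ and ${\mathbb Y}$ are small, so is $[{\mathbb X},{\mathbb Y}]$, hence it is a legitimate hom-object of ${\bf Met}_{\sV}\text{-}\Cat$. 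At this level of generality the conclusion is in fact an instance of the standard fact \cite{Kelly1982} that any monoidal-closed category is enriched in itself; but since this statement is meant as a template for a later treatment of $\sV$-metagories, I would make the composition and unit morphisms explicit.

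\emph{Composition.} As the composition ${\bf Met}_{\sV}\text{-}\Cat$-morphism $[{\mathbb X},{\mathbb Y}]\otimes[{\mathbb Y},{\mathbb Z}]\to[{\mathbb X},{\mathbb Z}]$ I would take horizontal composition: $(F,G)\mapsto G\circ F$ on objects, and $(\alpha,\beta)\mapsto\beta\ast\alpha$ on morphisms, where for $\alpha:F\to F'$ and $\beta:G\to G'$ the transformation $\beta\ast\alpha:GF\to G'F'$ has components $(\beta\ast\alpha)_x=\beta_{F'x}\cdot G\alpha_x=G'\alpha_x\cdot\beta_{Fx}$. That this underlies a functor of ordinary categories is the classical interchange (middle-four-exchange) law, so nothing new is needed there. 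The genuine point is $\sV$-contractivity: for parallel $\alpha,\alpha':F\to F'$ and $\beta,\beta':G\to G'$, composition-contractivity of the $\sV$-metric category ${\mathbb Z}$ yields $d_{\mathbb Z}(G\alpha_x,G\alpha'_x)\otimes d_{\mathbb Z}(\beta_{F'x},\beta'_{F'x})\leq d_{\mathbb Z}((\beta\ast\alpha)_x,(\beta'\ast\alpha')_x)$, and $\sV$-contractivity of $G$ gives $d_{\mathbb Y}(\alpha_x,\alpha'_x)\leq d_{\mathbb Z}(G\alpha_x,G\alpha'_x)$; taking meets over $x$ and using $(\bw_x a_x)\otimes(\bw_x b_x)\leq\bw_x(a_x\otimes b_x)$ (immediate from monotonicity of $\otimes$), one obtains $d_{[{\mathbb X},{\mathbb Y}]}(\alpha,\alpha')\otimes d_{[{\mathbb Y},{\mathbb Z}]}(\beta,\beta')\leq d_{[{\mathbb X},{\mathbb Z}]}(\beta\ast\alpha,\beta'\ast\alpha')$, as required.

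\emph{Units and coherence.} The unit ${\bf Met}_{\sV}\text{-}\Cat$-morphism, from the tensor-neutral $\sV$-metric category (one object, with the tensor-neutral $\sV$-metric space $\sI$ as its endo-hom) into $[{\mathbb X},{\mathbb X}]$, picks out the identity functor and its identity transformation, and its $\sV$-contractivity is trivial since the only hom-value of the source is $\sk$ while self-distances in $[{\mathbb X},{\mathbb X}]$ are $\geq\sk$. The associativity and two unit axioms then hold by a soft argument: forgetting $\sV$-metrics, all the data above is precisely the canonical self-enrichment of ${\bf Cat}$ (equivalently, its $2$-category structure), for which these axioms are classical, and a $\sV$-contractive functor is uniquely determined by its underlying ordinary functor --- contractivity being a property, not extra structure --- so a diagram of $\sV$-contractive functors commutes as soon as the underlying diagram of functors does. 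I expect the only real work to be the bookkeeping inside the composition step, keeping the two factorizations of $(\beta\ast\alpha)_x$ straight and applying composition-contractivity of ${\mathbb Z}$ in the correct order; there is no hidden coherence to resolve, precisely because the $\sV$-metric layer sits over ${\bf Cat}$ in a property-like way.
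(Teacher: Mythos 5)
Your proposal is correct and follows essentially the same route as the paper: reduce everything to the $\sV$-contractivity of the horizontal composition $[\mathbb X,\mathbb Y]\otimes[\mathbb Y,\mathbb Z]\to[\mathbb X,\mathbb Z]$, verify it componentwise using contractivity of one of the functors together with composition-contractivity of the target category, and take meets over objects, with the unit and coherence axioms inherited from $\Cat$. The only (immaterial) difference is which factorization of the horizontal composite you contract against --- you use $\beta_{F'x}\cdot G\alpha_x$ where the paper uses the other diagonal $J\varphi_x\cdot\psi_{Fx}$.
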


\begin{proof}
With all other items following from well-known properties of natural transformations, the only one to be confirmed is that, for all $\sV$-metric categories $\mathbb{X, Y, Z}$, the horizontal composition
$$c_{\mathbb{X, Y, Z}}:[\mathbb X,\mathbb Y]\otimes[\mathbb Y,\mathbb Z]\to[\mathbb X,\mathbb Z],\quad(\varphi:F\rightarrow G,\,\psi:H\rightarrow J)\;\mapsto\; (\psi\circ \phi:HF\rightarrow JG),$$
$$\begin{tikzpicture}[>=stealth]
\path (0,0) node(x) {$\mathbb X$}
(1.85,0) node(y) {$\mathbb Y$}
(3.7,0) node(z) {$\mathbb Z$}
(0.92,1.0) node(F)  {$F$}
(2.8,1.0) node(H)  {$H$}
(2.8,-1.0) node(J)  {$J$}
(0.92,-1.0) node(G)  {$G$}
(1.2,0) node(psi)  {$\phi$}
(3.08,0) node(phi)  {$\psi$};
\draw [->] (0,.23) arc (150:30:30pt) ;
\draw [->] (1.85,.23) arc (150:30:30pt) ;
\draw [->] (0,-0.23) arc (-150:-30:30pt) ;
\draw [->] (1.85,-0.23) arc (-150:-30:30pt) ;
\draw[double distance=1.5pt,->] (.92,0.4) -- (.92,-0.4);
\draw[double distance=1.5pt,->] (2.8,0.4) -- (2.8,-0.4);
\end{tikzpicture}$$

$$\xymatrix{HF
\ar[r]^{H\phi}\ar[d]_{\psi F}\ar[rd]^{\psi\circ\phi} & HG
\ar[d]^{\psi G}\\JF\ar[r]_{J\phi} & JG}
$$

\noindent is $\sV$-contractive. But for $\varphi',\psi'$ running parallel to $\varphi,\psi$, respectively, the formulae of Remark \ref{VMetCat} give us:
\begin{align*}
d_{[\mathbb X,\mathbb Y]\otimes[\mathbb Y,\mathbb Z]}((\varphi,\psi),(\varphi',\psi')) &=d_{[\mathbb X,\mathbb Y]}(\varphi,\varphi')\otimes d_{[\mathbb Y,\mathbb Z]}(\psi,\psi')\\
&=  \bigwedge_{x\in{\rm ob}{\mathbb X}}d_{\mathbb Y}(\varphi_x,\varphi'_{x})\;\otimes \bigwedge_{y\in{\rm ob}{\mathbb Y}}d_{\mathbb Z}(\psi_y,\psi'_{y})\\
&\leq \bigwedge_{x\in{\rm ob}{\mathbb X}}d_{\mathbb Z}(J\varphi_x,J\varphi'_{x})\;\otimes \bigwedge_{x\in{\rm ob}{\mathbb X}}d_{\mathbb Z}(\psi_{Fx},\psi'_{Fx})\\
&\leq \bigwedge_{x\in{\rm ob}{\mathbb X}}d_{\mathbb Z}(J\varphi_x\cdot\psi_{Fx},\;J\varphi'_x\cdot\psi'_{Fx})\\
& \leq \bigwedge_{x\in{\rm ob}{\mathbb X}}d_{\mathbb Z}((\psi\circ\varphi)_x,(\psi'\circ\varphi')_x)\\
& = d_{[\mathbb X,\mathbb Z]}(\psi\circ\varphi,\psi'\circ\varphi').
\end{align*}
\end{proof}

As shown in Section 4, ${\bf Metag}_{\sV}$ is,
 like ${\bf Met}_{\sV}\text{-}\Cat$, symmetric monoidal closed. So, for all $\sV$-metagories ${\mathbb{X, Y}}$, we have the local $\sV$-metagories $[{\mathbb X},{\mathbb Y}]$ (Theorem \ref{main thm}), which provide us with a legitimate 
$\sV$-metagorical substitute for a vertical composition of natural transformations of $\sV$-contractors. Somewhat surprisingly, as shown in \cite{Tholen2019}, there is a genuine horizontal composition law. That is: for $\sV$-metagories ${\mathbb{X,Y,Z}}$, just like in ${\bf Met}_{\sV}\text{-}\Cat$,
there is a well-defined composition 
$$c_{\mathbb{X, Y, Z}}:[\mathbb X,\mathbb Y]\otimes[\mathbb Y,\mathbb Z]\to[\mathbb X,\mathbb Z],\quad(\varphi:F\rightarrow G,\,\psi:H\rightarrow J)\;\mapsto\; (\psi\circ \phi:HF\rightarrow JG),$$
which is easily seen to satisfy the associative law of enriched category theory; one just puts
$$(\psi\circ\varphi)_f:=\psi_{\varphi_{f}}:HFx\to JGy,$$
for all $f:x\to y$ in $\mathbb X$. Also, trivially, the transformations $1_{\rm{Id}_{\mathbb X}}\;(\mathbb X$ a $\sV$-metagory) provide the needed identity morphisms with respect to $\circ$ and, not at all trivially, the composition map $c_{\mathbb{X,Y,Z}}$ turns out to be a $\sV$-contractor. In summary then, one obtains the appropriate generalization of Proposition \ref{self-enriched}, as follows :
\begin{thm}{\em{\cite{Tholen2019}}}
$\bf{Metag}_{\sV}$ is self-enriched.
\end{thm}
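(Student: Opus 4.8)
\emph{The plan.} To establish self-enrichment I must produce, for all small $\sV$-metagories ${\mathbb X},{\mathbb Y},{\mathbb Z}$, the $\sV$-metagory $[{\mathbb X},{\mathbb Y}]$ of Theorem \ref{main thm} as hom-object, a composition $\sV$-contractor $c_{{\mathbb X},{\mathbb Y},{\mathbb Z}}:[{\mathbb X},{\mathbb Y}]\otimes[{\mathbb Y},{\mathbb Z}]\to[{\mathbb X},{\mathbb Z}]$, and, for each ${\mathbb X}$, a unit $\sV$-contractor from the $\otimes$-neutral $\sV$-metagory $\mathbf I$ to $[{\mathbb X},{\mathbb X}]$ picking out ${\rm Id}_{\mathbb X}$ with its identity transformation, all satisfying the associativity and unit coherence conditions and the evident functoriality in each variable. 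On objects, $c_{{\mathbb X},{\mathbb Y},{\mathbb Z}}$ is composition of contractors, $(H,F)\mapsto HF$; on morphisms it sends a pair $(\psi:H\to J,\,\varphi:F\to G)$ to $\psi\circ\varphi:HF\to JG$ with $(\psi\circ\varphi)_f:=\psi_{\varphi_f}$, for all $f:x\to y$ in ${\mathbb X}$. One quick way to the conclusion is formal: since ${\bf Metag}_{\sV}$ is symmetric monoidal closed, self-enrichment with hom-objects the internal homs and composition the canonical internal-composition morphism is automatic. The substantive route, and the one useful as a blueprint for higher-dimensional generalizations, is to verify the explicit formula above directly, which is what I describe next.

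\emph{Well-definedness and coherence.} First, $HF$ is a $\sV$-contractor whenever $F,H$ are, and ${\rm Id}_{\mathbb X}$ is one. Next, $\psi\circ\varphi$ is a natural transformation $HF\to JG$: the two required inequalities $\sk\leq\delta_{\mathbb Z}(HFf,\psi_{\varphi_y},\psi_{\varphi_f})$ and $\sk\leq\delta_{\mathbb Z}(\psi_{\varphi_x},JGf,\psi_{\varphi_f})$ follow by applying, respectively, the right and the left associativity law of ${\mathbb Z}$ to the tetrahedra $(HFf,H\varphi_y,\psi_{Gy};H\varphi_f,\psi_{\varphi_y},\psi_{\varphi_f})$ and $(\psi_{Fx},J\varphi_x,JGf;\psi_{\varphi_x},J\varphi_f,\psi_{\varphi_f})$, whose faces are supplied as $\sk$-bounds by the naturality of $\psi$ (evaluated at the ${\mathbb Y}$-morphisms $\varphi_x,\varphi_y,\varphi_f$) and by the naturality of $\varphi$ transported along the contractors $H$ and $J$. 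The coherence then needs no further inequalities: associativity $(\chi\circ\psi)\circ\varphi=\chi\circ(\psi\circ\varphi)$ holds strictly, both sides having $f$-component $\chi_{\psi_{\varphi_f}}$; the unit laws hold because ${\rm Id}$ acts as the identity on morphisms; $c_{{\mathbb X},{\mathbb Y},{\mathbb Z}}$ preserves identities since $(1_H\circ 1_F)_f=H(Ff)=(1_{HF})_f$; and the unit $\sV$-contractor $\mathbf I\to[{\mathbb X},{\mathbb X}]$ is a $\sV$-contractor trivially.

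\emph{The crux.} It remains to show that $c_{{\mathbb X},{\mathbb Y},{\mathbb Z}}$ contracts areas, i.e., $\delta_{[{\mathbb X},{\mathbb Y}]\otimes[{\mathbb Y},{\mathbb Z}]}((\varphi,\psi),(\varphi',\psi'),(\varphi'',\psi''))\leq\delta_{[{\mathbb X},{\mathbb Z}]}(\psi\circ\varphi,\psi'\circ\varphi',\psi''\circ\varphi'')$. Unwinding the defining infima, this reduces to proving, for every object $x$ of ${\mathbb X}$ and with $p=\varphi_x,\,q=\varphi'_x,\,r=\varphi''_x$, that
$$\delta_{\mathbb Y}(p,q,r)\otimes\bw_{y\in{\rm ob}{\mathbb Y}}\delta_{\mathbb Z}(\psi_y,\psi'_y,\psi''_y)\ \leq\ \delta_{\mathbb Z}(\psi_p,\psi'_q,\psi''_r).$$
The blueprint is the situation of ${\bf Met}_{\sV}\text{-}\Cat$ (Proposition \ref{self-enriched}): there $\psi_p=\psi_{Gx}\cdot Hp=Jp\cdot\psi_{Fx}$, and using naturality of $\psi''$ at $p$, $\sV$-contractivity of $J'$, and the triangle inequality in ${\mathbb Z}$, one checks that $\psi'_q\cdot\psi_p$ is $\bigl(\bw_y\delta_{\mathbb Z}(\psi_y,\psi'_y,\psi''_y)\bigr)$-close to $J'q\cdot\psi''_{Gx}\cdot Hp=J'(q\cdot p)\cdot\psi''_{Fx}$, which is $\delta_{\mathbb Y}(p,q,r)$-close to $\psi''_r$. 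In the metagorical setting neither ${\mathbb Y}$ nor ${\mathbb Z}$ carries a composition, so each ``compose, then compare'' move is replaced by an application of one of the two associativity laws of ${\mathbb Z}$ to a suitable tetrahedron; concretely one introduces the auxiliary morphisms $H\varphi_x$, $\psi_{Gx}$, $\psi'_{Gx}$, $\psi''_{Gx}$, $J'\varphi_x$, $J'\varphi'_x$, $J'\varphi''_x$, $\psi''_{Fx}$ and the naturality component $\psi''_{\varphi_x}$, and then feeds the naturality $\sk$-bounds for $\psi,\psi',\psi''$ together with the inequalities $\delta_{\mathbb Y}(p,q,r)\leq\delta_{\mathbb Z}(J'\varphi_x,J'\varphi'_x,J'\varphi''_x)$ (valid since $J'$ is a $\sV$-contractor) and $\bw_y\delta_{\mathbb Z}(\psi_y,\psi'_y,\psi''_y)\leq\delta_{\mathbb Z}(\psi_{Gx},\psi'_{Gx},\psi''_{Gx})$ into Proposition \ref{delta cont} and the associativity laws, chaining the resulting $\otimes$-inequalities. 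Passing finally to the infimum over $x$ yields the claim, and with the previous items the proof is complete.

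\emph{The main obstacle} is exactly this last step. Because ${\mathbb Z}$ has no composition with which to ``straighten out'' the naturality diagonals $\psi_p,\psi'_q,\psi''_r$, every step of the informal ${\bf Met}_{\sV}\text{-}\Cat$ argument must be simulated by a tetrahedral inequality, and the tetrahedra have to be chosen carefully so that, once all the $\otimes$-inequalities are assembled, the left-hand side collapses to exactly $\delta_{\mathbb Y}(p,q,r)\otimes\bw_y\delta_{\mathbb Z}(\psi_y,\psi'_y,\psi''_y)$ and nothing weaker; keeping the sources and targets of the many auxiliary morphisms consistent throughout is where the bookkeeping is heaviest.
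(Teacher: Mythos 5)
Your substantive route follows exactly the paper's own sketch: the same composition formula $(\psi\circ\varphi)_f=\psi_{\varphi_f}$, and your verifications of the easy parts check out (both naturality tetrahedra for $\psi\circ\varphi$ are correctly chosen and correctly matched with the right/left associativity laws; strict associativity, the unit laws, identity preservation, and the reduction of contractivity of $c_{\mathbb{X,Y,Z}}$ to the pointwise inequality $\delta_{\mathbb Y}(p,q,r)\otimes\bw_{y}\delta_{\mathbb Z}(\psi_y,\psi'_y,\psi''_y)\leq\delta_{\mathbb Z}(\psi_p,\psi'_q,\psi''_r)$ are all fine). But at precisely the step the paper itself calls ``not at all trivial'' --- and whose full proof the paper defers to \cite{Tholen2019} --- you stop at a strategy description: you list auxiliary morphisms and two contractivity bounds but never exhibit the tetrahedra or the chain of $\otimes$-inequalities, and you say so yourself. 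As a direct proof, that is a genuine gap, though a fillable one. With your notation $\varphi:F\to G$, $\varphi':G\to G'$, $\varphi'':F\to G'$, $\psi:H\to J$, $\psi':J\to J'$, $\psi'':H\to J'$, $p=\varphi_x$, $q=\varphi'_x$, $r=\varphi''_x$, $\Psi=\bw_{y}\delta_{\mathbb Z}(\psi_y,\psi'_y,\psi''_y)$, three tetrahedra suffice: right associativity on $(Hp,Hq,\psi''_{G'x};Hr,\psi''_q,\psi''_r)$ gives $\delta_{\mathbb Y}(p,q,r)\leq\delta_{\mathbb Z}(Hp,\psi''_q,\psi''_r)$ (contractivity of $H$ plus naturality of $\psi''$ at $q$ and $r$); right associativity on $(\psi_{Gx},\psi'_{Gx},J'q;\psi''_{Gx},\psi'_q,\psi''_q)$ gives $\Psi\leq\delta_{\mathbb Z}(\psi_{Gx},\psi'_q,\psi''_q)$ (naturality of $\psi'$ and $\psi''$ at $q$); and left associativity on $(Hp,\psi_{Gx},\psi'_q;\psi_p,\psi''_q,\psi''_r)$, whose first face is $\geq\sk$ by naturality of $\psi$ at $p$, then yields $\Psi\otimes\delta_{\mathbb Y}(p,q,r)\leq\delta_{\mathbb Z}(\psi_p,\psi'_q,\psi''_r)$ --- no appeal to Proposition \ref{delta cont} is needed.

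Your ``quick formal way,'' on the other hand, is for the literal statement a complete argument and a genuinely different route from the one the paper gestures at: granting the Corollary to Theorem \ref{main thm} that ${\bf Metag}_{\sV}$ is symmetric monoidal closed, every such category is canonically enriched in itself, with hom-objects the internal homs and composition the transpose of iterated evaluation; the evaluation $[\mathbb X,\mathbb Y]\otimes\mathbb X\to\mathbb Y$, $(\alpha,f)\mapsto\alpha_f$, is a $\sV$-contractor by the ``conversely'' direction of Theorem \ref{main thm}, and transposing shows that the canonical composition is exactly $(\psi\circ\varphi)_f=\psi_{\varphi_f}$, so its contractivity comes for free. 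The paper does not argue this way: it emphasizes the explicit horizontal law and defers its direct verification to \cite{Tholen2019}, because the hands-on computation is what serves as the blueprint for higher-dimensional approximate structures; the formal route buys brevity at the price of resting on the coherence details packed into the monoidal-closedness Corollary.
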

We note that the added structure on $\bf{Metag}_{\sV}$ is compatible with the corresponding structure on ${\bf Met}_{\sV}\text{-}\Cat$; in particular: the induced $\sV$-area functor (see Proposition \ref{metag prop}) preserves the horizontal composition.
The theorem clearly offers the blueprint for a notion of $\sV$-{\em 2-metagory}, which is being pursued in \cite{Tholen2019}, and perhaps even for higher-dimensional approximate categorical structures, which, albeit  in unspecified terms, are alluded to in the Introduction of \cite{AlioucheSimpson2012}.
\\\\
{\em Acknowledgement:} We thank the anonymous referee for a report with numerous helpful thoughts and suggestions; in particular, for raising the question mentioned at the beginning of Section 9.
\\\\

\noindent REFERENCES


\begin{thebibliography}{10}

\bibitem{Adamek1990}
J.~Ad{\'a}mek, H.~Herrlich, and G.E.~Strecker.
\newblock {\em Abstract and Concrete Categories: The Joy of Cats}.
\newblock Wiley, New York, 1990.

\bibitem{AkhvledianiClementinoTholen2010}
A.~Akhvlediani, M.M.~Clementino, and W.~Tholen.
\newblock On the categorical meaning of Hausdorff and Gromov distances, I.
\newblock {\em Topology and its Applications} 157(8):1275--1295, 2010.

\bibitem{AlioucheSimpson2012}
A.~Aliouche and C.~Simpson.
\newblock Fixed points and lines in 2-metric spaces.
\newblock {\em Advances in Mathematics} 229:668--690, 2012.

\bibitem{AlioucheSimpson2017}
A.~Aliouche and C.~Simpson.
\newblock Approximate categorical structures.
\newblock {\em Theory and Applications of Categories} 32(49) (20):1522--1562, 2017.

\bibitem{Borceux1994}
F.~Borceux.
\newblock {\em Handbook of Categorical Algebra 1, 2, 3}.
\newblock Cambridge University Press, Cambridge 1994.

\bibitem{Flagg1997}
R.C.~Flagg.
\newblock Quantales and continuity spaces.
\newblock {\em Algebra Universalis} 37:257--276, 1997.

\bibitem{Gahler1963}
S.~G\"ahler.
\newblock 2-metrische R\"aume und ihre topologische Struktur.
\newblock {\em Mathematische Nachrichten} 26:115--148 (1963).

\bibitem{HofmannReis2013}
D.~Hofmann and C.D.~Reis.
\newblock Probabilistic metric spaces as enriched categories.
\newblock {\em Fuzzy Sets and Systems}, 210:1-21, 2013.

\bibitem{MonTop}
D.~Hofmann, G.J.~Seal, and W.~Tholen (editors).
\newblock {\em Monoidal Topology: A Categorical Approach to Order, Metric, and
  Topology}.
\newblock Cambridge University Press, Cambridge, 2014.

\bibitem{Jager2015}
G.~J\"{a}ger.
\newblock A convergence theory for probabilistic metric
spaces,
\newblock {\em Quaestiones Mathematicae}, 38(4):587--599, 2015.

\bibitem{Kelly1982}
G.M.~Kelly.
\newblock {\em Basic concepts of enriched category theory}, volume~64 of {\em
  London Mathematical Society Lecture Note Series}.
\newblock Cambridge University Press, Cambridge, 1982.

\bibitem{LaiTholen2017}
H.~Lai and W.~Tholen.
\newblock A note on the topologicity of quantale-valued topological spaces.
\newblock {\em Logical Methods in Computer Science} 13(3):3861, 2017.

\bibitem{Lawvere1973}
F.W.~Lawvere. 
\newblock Metric spaces, generalized logic, and closed categories.
\newblock {\em Rendiconti del Seminario Matematico e Fisico di Milano}
43:135--166,1973.
\newblock Author-commented version available in: {\em Reprints in Theory and Applications of
Categories} 1:1--37, 2002.

\bibitem{Menger1928}
K.~Menger.
\newblock Untersuchungen \"uber allgemeine Metrik.
\newblock {\em Mathematische Annalen} 100:75-163, 1928.

\bibitem{SchweizerSklar1983}
B.~Schweizer and A.~Sklar.
\newblock {\em Probabilistic Metric Spaces}.
\newblock {North Holland, Amsterdam, 1983}

\bibitem{PicadoPultr2012}
J.~Picado and A.~Pultr.
\newblock {\em Frames and Locales. Topology without Points}.
\newblock Birkh\"auser, Basel, 2012.

\bibitem{Raney1960}
G.N.~Raney.
\newblock Tight Galois connections and complete distributivity. 
\newblock {\em Transactions of the  American Matematical Society} 97:418--426, 1960.

\bibitem{Rosenthal1990}
K.I.~Rosenthal.
\newblock {\em Quantales and Their Applications}.
\newblock Addison Wesley Longman, Harlow, 1990.

\bibitem{Tholen1978}
W.~Tholen.
\newblock On Wyler's taut lift theorem.
\newblock{\em General Topology and Its Applications} 8:197--206, 1978.

\bibitem{Tholen2019} 
W.~Tholen.
\newblock Approximate composition (forthcoming).

\bibitem{Weiss2012}
I.~Weiss.
\newblock Metric 1-spaces.
\newblock {\em Preprint}, available in: arXiv 1201.3980.

\bibitem{Wood2004}
R.J~Wood.
\newblock Ordered sets via adjunction.
\newblock In: M.C.~Pedicchio and W.~Tholen (editors), {\em Categorical Foundations}, pp. 5--47.
\newblock Cambridge University Press, Cambridge, 2004.

\bibitem{Wyler1971}
O.~Wyler.
\newblock On the categories of general topology and topological algebra.
\newblock {\em Archiv der Mathematik} 22:7--17, 1971.

\end{thebibliography}
\end{document}